\numberwithin{equation}{section}
\newtheorem{theorem}{Theorem}[section]
\newtheorem{corollary}[theorem]{Corollary}
\newtheorem{lemma}[theorem]{Lemma}
\newtheorem{proposition}[theorem]{Proposition}
\theoremstyle{remark}
\newtheorem{remark}[theorem]{Remark}
\theoremstyle{definition}
\newtheorem*{main-definition}{Main Definition}
\begin{document}

\title{Distribution spaces associated with elliptic operators}

%    Information for first author

\author[I. Chepurukhina]{Iryna Chepurukhina}

\address{Institute of Mathematics of the National Academy of Sciences of Ukraine, 3 Tereshchen\-kivs'ka, Kyiv, 01024, Ukraine}

\email{Chepuruhina@gmail.com, chepurukhina@imath.kiev.ua}

%    Information for second author

\author[A. Murach]{Aleksandr Murach}

\address{Institute of Mathematics of the National Academy of Sciences of Ukraine, 3 Tereshchen\-kivs'ka, Kyiv, 01024, Ukraine}

\email{murach@imath.kiev.ua}

%    General info
\subjclass[2010]{46E35, 46B70, 35J30, 35J40}

%%46B70 Interpolation between normed linear spaces
%%46E35 Sobolev spaces and other spaces of smooth functions
%%35J30 Higher-order elliptic equations
%%35J40 Boundary value problems for higher-order elliptic equations

\keywords{Quasi-Banach distribution space, Sobolev space, Besov space, Triebel--Lizorkin space, Nikolskii space, elliptic operator, interpolation functor, elliptic problem, rough boundary data, Fredholm operator, boundary white noise}

%%\thanks{This work is supported by the European Union’s Horizon 2020 research and innovation programme under the Marie Sk{\l}odowska-Curie grant agreement No 873071 (SOMPATY: Spectral Optimization: From Mathematics to Physics and Advanced Technology).}

\begin{abstract}
We study complex distribution spaces given over a bounded Lipschitz domain $\Omega$ and associated with an elliptic differential operator $A$ with $C^{\infty}$-coefficients on $\overline{\Omega}$. If $X$ and $Y$ are quasi-Banach distribution spaces over $\Omega$, then the space $X(A,Y)$ under study consists of all distributions $u\in X$ such that $Au\in Y$ and is endowed with the graph quasi-norm. Assuming $X$ to be an arbitrary Besov space or Triebel--Lizorkin space over $\Omega$, we find sufficient conditions for $Y$ under which the interpolation between the spaces $X(A,Y)$ preserves their structure, these spaces are separable, and the set $C^{\infty}(\overline{\Omega})$ is dense in them. We then explicitly describe the spaces obtained by the real, complex, and $\pm$ interpolation between the spaces under study. We apply these spaces to general elliptic problems with rough boundary data by proving the Fredholm property for bounded operators induced by these problems and defined on certain spaces $X(A,Y)$. Specifically, we establish the maximal regularity of solutions to some elliptic problems with Gaussian white noise in boundary conditions. Quasi-Banach distribution spaces are involved in the concept of $X(A,Y)$ for the first time. Our results are new even for inner product Sobolev spaces of integer-valued order.
\end{abstract}

\maketitle

\section{Introduction}\label{sec1}

The paper is devoted to distribution spaces of the form $X(A,Y):=\{u\in X:Au\in Y\}$ provided that $X$ and $Y$ are complex Banach or quasi-Banach distribution spaces over a bounded Lipschitz domain $\Omega$ and that $A$ is an elliptic linear partial differential operator (PDO) on $\overline{\Omega}$ with complex-valued coefficients of class $C^{\infty}(\overline{\Omega})$. Naturally, the space $X(A,Y)$ is endowed with the graph quasi-norm $\|u\|_{X}+\|Au\|_{Y}$. Such spaces arise in the theory of elliptic PDOs and elliptic boundary-value problems.

The first historical example of $X(A,Y)$ was given by letting $X=Y=L_{2}(\Omega)$. In this case, $X(A,Y)$ is the domain of the maximal operator induced by the unbounded operator $A$ in the Hilbert space $L_{2}(\Omega)$ \cite{Birman57, Hermander55, Hermander58, Ladyzhenskaya51}. Lions and Magenes \cite{LionsMagenes62V, LionsMagenes63VI} systematically used the space $X(A,Y)$ in the case where $X$ is the normed Sobolev space $H^{s}_{p}(\Omega)$ with smoothness index (or order) $s<\mathrm{ord}\,A=:2l$ and exponent $p\in(1,\infty)$ and where $Y=L_{p}(\Omega)=H^{0}_{p}(\Omega)$. Then $X(A,Y)$ serves as a solution space of regular elliptic problems posed for the equation $Au=f$ and containing boundary data of low regularity (in other wards, rough boundary data). In the case of the Dirichlet boundary-value problem, Lions and Magenes \cite{LionsMagenes61II, LionsMagenes61III} let $Y:=H^{-l}_{p}(\Omega)$ to obtain a wider solution space. Their fundamental monograph \cite[Chapter~2, Section 6 and~7]{LionsMagenes72} uses the Hilbert space $X(A,Y)$ as the solution space in the case where $X=H^{s}_{2}(\Omega)$ with $s<2l$ and when $Y$ is a certain weighted Sobolev space of negative order $s-2l$. Various classes of spaces $X(A,Y)$ were applied to elliptic problems with data of low regularity in \cite{DenkPlossRauSeiler23, KasirenkoMikhailetsMurach19, MikhailetsMurach14, Murach09MFAT2} in the framework of Sobolev spaces, \cite{ChepurukhinaMurach21Dop6, Murach94Dop12, MurachChepurukhina20Dop8} concerning Besov, Nikolskii, and Triebel--Lizorkin spaces, and \cite{AnopDenkMurach21CPAA1, AnopKasirenkoMurach18UMJ3, ChepurukhinaMurach20MFAT2, MikhailetsMurach09OperatorTheory191, MikhailetsMurach12BJMA2, MikhailetsMurach14, MurachChepurukhina15UMJ5} involving distribution spaces of generalized smoothness.

Our purpose is to investigate properties of the spaces $X(A,Y)$ concerning interpolation between these spaces, their separability, and dense sets in them. We give applications of such spaces to general elliptic problems with rough boundary data by proving Fredholm property for bounded operators induced by these problems and defined on certain spaces $X(A,Y)$. A special case where $X$ and $Y$ are inner product Sobolev spaces was studied in \cite{KasirenkoMikhailetsMurach19} for the case where $\partial\Omega$ is of class $C^{\infty}$. Nevertheless, our results are new even for these Sobolev spaces (moreover, with integer-valued smoothness index) as for properties of $X(A,Y)$ and applications to elliptic problems. To our best knowledge, quasi-Banach distribution spaces are involved in the concept of $X(A,Y)$ for the first time.

The paper consists of 7 sections. Section~\ref{sec1} is Introduction. Section~\ref{sec2} studies the spaces $X(T,Y)$ in the general setting where $T$ is an arbitrary linear bounded operator from a quasi-Banach space $X$ to a linear topological space containing another quasi-Banach space $Y$. We indicate sufficient conditions for the interpolation between these spaces to give a space of the same form (Theorem~\ref{th-basic}) and find sufficient conditions for the separability of $X(T,Y)$ and the density of sets in $X(T,Y)$ (Theorem~\ref{th-sep-dense}). These results serve as a base for Section~\ref{sec3}, which studies distribution spaces $X(A,Y)$ associated with the elliptic PDO $A$. We assume that $X$ is a Besov space or Triebel--Lizorkin space over $\Omega$ with arbitrarily smoothness index and exponents and that $Y$ contains one of such spaces. Sobolev, Nikolskii, and H\"older--Zygmund spaces fit this setting. We describe a result of the interpolation between such spaces $X(A,Y)$ by any interpolation functor (Theorem~\ref{main-interp-th}) and specify this result for the functors of the real (Theorems \ref{R-interp} and \ref{R-interp-special}), complex (Theorem~\ref{C-interp}), and $\pm$ interpolation (Theorem~\ref{plus-minus-interp}). We also find sufficient conditions for the space $X(A,Y)$ to be separable and for the set $C^{\infty}(\overline{\Omega})$ to be dense in this space (Theorem~\ref{th-distr-sep-dense}). This implies that $X(A,Y)$ is separable and that $C^{\infty}(\overline{\Omega})$ is dense in $X(A,Y)$ if $X$ and $Y$ are Besov or Triebel--Lizorkin spaces with finite exponents (Theorem~\ref{B-F-sep-dense}).

The next Section~\ref{sec4} gives applications of spaces $X(A,Y)$ to a general elliptic problem with boundary data of low regularity, the boundary of $\Omega$ being assumed to be of class $C^{\infty}$. Using various interpolation functors, we prove that this problem sets Fredholm bounded operators between appropriate Besov spaces or Triebel--Lizorkin spaces with finite exponents (Theorem~\ref{th-Fredholm}) and between Nikolskii spaces (Theorem~\ref{th-Fredholm-Nikolskii}). As a consequence, we find a sufficient condition for $Y$ under which the space $X(A,Y)$ does not depend on the sum-exponent $q$ provided that $X$ is the Triebel--Lizorkin space $F^{s}_{p,q}(\Omega)$ (Theorem~\ref{source-space-F}). We also give an application of Theorem~\ref{th-Fredholm-Nikolskii} to certain elliptic problems with white noise in boundary data by obtaining the maximal regularity of their solutions in terms of negative Nikolskii spaces and a corresponding \textit{a~priori} estimate of the solutions (Theorem~\ref{th-white-noise}).

Section~\ref{sec5} gives historical comments on results obtained in two previous sections. Sections \ref{sec6} and \ref{sec7} are appendices. The first of them discusses the structure of a common range complement of the Fredholm operators induced by the elliptic problem and the structure of the kernel of the adjoint operators. The last Section~\ref{sec7} gives proofs of Lemma~\ref{prop-interp-Fredholm} and Corollary~\ref{cor-interp-Fredholm} about sufficient conditions for the Fredholm property to be preserved under the interpolation of bounded linear operators by an arbitrary interpolation functor.

\section{Basic results}\label{sec2}

In the paper we consider complex linear spaces and Hausdorff topological spaces. Let $X$ and $Y$ be quasi-Banach spaces (specifically, they may be Banach spaces), $\Upsilon$ be a linear topological space, and $T:X\to\Upsilon$ be a continuous linear operator. We assume that $Y$ is continuously embedded in $\Upsilon$; i.e., $Y$ is a linear manifold in~$\Upsilon$, and the identity mapping is a continuous operator from $Y$ to $\Upsilon$. As usual, this property is denoted by $Y\hookrightarrow\Upsilon$.

Consider the linear space
\begin{equation}\label{space-X(T,Y)}
X(T,Y):=(X)(T,Y):=\{u\in X:Tu\in Y\}
\end{equation}
endowed with the graph quasi-norm
\begin{equation}\label{norm-X(T,Y)}
\|u\|_{X(T,Y)}:=\|u\|_{X}+\|Tu\|_{Y}.
\end{equation}
This quasi-normed space is complete and does not depend on $\Upsilon$.
Of course, if $X$ and $Y$ are Banach spaces, then so is $X(T,Y)$. The completeness is demonstrated as follows:

If $(u_k)_{k=1}^{\infty}$ is a Cauchy sequence in $X(T,Y)$, there exist vectors $u\in X$ and $v\in Y$ such that
$u_k\to u$ in $X$ and that $Tu_k\to v$ in $Y$ as $k\rightarrow\infty$. It follows from the first convergence that $Tu_k\to Tu$ in $\Upsilon$; thus, $Tu=v\in Y$, and $u_k\to u$ in $X(T,Y)$.

Let $\mathfrak{F}$ be an arbitrary interpolation functor defined on the category of all interpolation pairs of quasi-Banach spaces or on the category of all interpolation pairs of Banach spaces. Recall \cite[Section~1.2.1]{Triebel95} that an ordered pair $[X_{0},X_{1}]$ of quasi-Banach spaces $X_{0}$ and $X_{1}$ is called an interpolation one if these spaces are continuously embedded in a certain linear topological space. Given such a pair, we obtain the quasi-Banach spaces $X_{0}\cap X_{1}$ and $X_{0}+X_{1}$ endowed with the quasi-norms
\begin{equation*}
\|u\|_{X_0\cap X_1}:=
\max\bigl\{\|u\|_{X_0},\|u\|_{X_1}\bigr\}
\end{equation*}
and
\begin{equation*}
\|u\|_{X_0+X_1}:=
\inf\bigl\{\|u_0\|_{X_0}+\|u_1\|_{X_1}:u_0\in X_0,\,u_1\in X_1,\, u_0+u_1=u\bigr\},
\end{equation*}
respectively. Of course, if $X_{0}$ and $X_{1}$ are Banach spaces, then so are $X_{0}\cap X_{1}$ and $X_{0}+X_{1}$.

Recall also \cite[Definition~1.2.2/1]{Triebel95} that $\mathfrak{F}$ is characterized by the following properties:
\begin{itemize}
\item[(a)] If $[X_{0},X_{1}]$ is an interpolation pair of quasi-Banach spaces (resp., Banach spaces), then $\mathfrak{F}[X_{0},X_{1}]$ is a quasi-Banach space (resp., Banach space) such that the continuous embeddings $X_{0}\cap X_{1}\hookrightarrow\mathfrak{F}[X_{0},X_{1}]\hookrightarrow X_{0}+X_{1}$ hold true.
\item[(b)] If  $[X_{0},X_{1}]$ and $[Y_{0},Y_{1}]$~are interpolation pairs of quasi-Banach spaces (resp., Banach spaces) and if $T:X_{0}+X_{1}\to Y_{0}+Y_{1}$ is a linear operator such that its restrictions to the spaces $X_{0}$ and $X_{1}$ are bounded operators $T:X_{0}\to Y_{0}$ and $T:X_{1}\to Y_{1}$, then the restriction of $T$ to the space $\mathfrak{F}[X_{0},X_{1}]$ is a bounded operator $T:\mathfrak{F}[X_{0},X_{1}]\to \mathfrak{F}[Y_{0},Y_{1}]$.
\end{itemize}
We will say that the space $\mathfrak{F}[X_{0},X_{1}]$ is obtained by the interpolation between the spaces $X_{0}$ and $X_{1}$ (by means of the functor $\mathfrak{F}$). We also say that the bounded operator $T:\mathfrak{F}[X_{0},X_{1}]\to \mathfrak{F}[Y_{0},Y_{1}]$ is obtained by the interpolation of the operators $T:X_{j}\to Y_{j}$, with $j\in\{0,1\}$.

Let us formulate sufficient conditions under which the interpolation between spaces of the form \eqref{space-X(T,Y)}, \eqref{norm-X(T,Y)} gives a space of the same form.

\begin{theorem}\label{th-basic}
Suppose that six quasi-Banach spaces (resp., Banach spaces) $X_{0}$, $Y_{0}$, $Z_{0}$, $X_{1}$, $Y_{1}$, and $Z_{1}$ and three linear mappings $T$, $R$, and $S$ are given and satisfy the following conditions:
\begin{itemize}
\item[(i)] The pairs $[X_0,X_1]$ and $[Y_0,Y_1]$ are interpolation ones.
\item[(ii)] The spaces $Y_{0}$, $Z_{0}$, $Y_{1}$, and $Z_{1}$ are linear manifolds in a certain linear space.
\item[(iii)] The continuous embeddings $Y_{0}\hookrightarrow Z_{0}$ and $Y_{1}\hookrightarrow Z_{1}$ hold true.
\item[(iv)] The mapping $T$ acts from $X_0+X_1$ to $Z_0+Z_1$, and its restriction to $X_j$ is a bounded operator $T:X_j\rightarrow Z_j$ whenever $j\in\{0,1\}$.
\item[(v)] The mapping $R$ acts from $Z_0+Z_1$ to $X_0+X_1$, and its restriction to $Z_j$ is a bounded operator $R:Z_j\rightarrow X_j$ whenever $j\in\{0,1\}$.
\item[(vi)] The mapping $S$ acts on $Z_0+Z_1$, and its restriction to $Z_j$ is a bounded operator $S:\nobreak Z_j\rightarrow Y_j$ whenever $j\in\{0,1\}$.
\item[(vii)] The equality $TRw=w+Sw$ is valid for every $w\in Z_0+Z_1$.
\end{itemize}
Then the equality of spaces
\begin{equation}\label{11f1}
\mathfrak{F}[X_{0}(T,Y_{0}),X_{1}(T,Y_{1})]=
(\mathfrak{F}[X_{0},X_{1}])(T,\mathfrak{F}[Y_{0},Y_{1}])
\end{equation}
holds true up to equivalence of quasi-norms, with these spaces being well defined.
\end{theorem}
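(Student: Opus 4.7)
The strategy is to establish the two continuous embeddings in \eqref{11f1} separately, exploiting property (b) of the functor $\mathfrak{F}$. First I would check that both sides are well defined. Conditions (iv) and (iii) combined with property (b) show that $T$ extends by interpolation to a bounded operator $T:\mathfrak{F}[X_0,X_1]\to\mathfrak{F}[Z_0,Z_1]$ and that $\mathfrak{F}[Y_0,Y_1]\hookrightarrow\mathfrak{F}[Z_0,Z_1]$, which makes the right-hand side of \eqref{11f1} a legitimate graph space; meanwhile each $X_j(T,Y_j)$ embeds continuously into $X_j$, so $[X_0(T,Y_0),X_1(T,Y_1)]$ is an interpolation pair in $X_0+X_1$. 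The easy embedding of the left-hand side into the right-hand side then follows by applying property (b) to the inclusions $X_j(T,Y_j)\hookrightarrow X_j$ and to the restrictions $T:X_j(T,Y_j)\to Y_j$, $j\in\{0,1\}$, both bounded with norm at most one.

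The reverse embedding is the crux of the theorem, and this is where conditions (v)--(vii) come into play. Their role can be read as providing a parametrix-style right inverse $R$ of $T$ modulo an error $S$ whose range is the smoother space $Y_j$ rather than merely $Z_j$. Concretely, I would introduce the auxiliary operator
$$
\Psi(x,y):=x-RTx+Ry
$$
on $(X_0+X_1)\oplus(Y_0+Y_1)$. Conditions (iv) and (v) immediately give $\Psi(x,y)\in X_j$ whenever $(x,y)\in X_j\oplus Y_j$, while using (vii) twice yields the identity $T\Psi(x,y)=y+Sy-STx$, which lies in $Y_j$ by (vi). Hence $\Psi:X_j\oplus Y_j\to X_j(T,Y_j)$ is a bounded linear operator for $j=0,1$.

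At this point property (b), together with the routine fact that $\mathfrak{F}$ commutes with finite direct sums, supplies a bounded operator
$$
\Psi:\mathfrak{F}[X_0,X_1]\oplus\mathfrak{F}[Y_0,Y_1]\to\mathfrak{F}[X_0(T,Y_0),X_1(T,Y_1)].
$$
For any $u$ in the right-hand side of \eqref{11f1} the pair $(u,Tu)$ lies in the domain of this interpolated $\Psi$, and the decisive identity $\Psi(u,Tu)=u-RTu+RTu=u$ both places $u$ in the left-hand side and bounds its quasi-norm by a constant multiple of $\|u\|_{\mathfrak{F}[X_0,X_1]}+\|Tu\|_{\mathfrak{F}[Y_0,Y_1]}$. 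The main obstacle is the structural observation behind the choice of $\Psi$: one must recognize that the refined target $Y_j$ of $S$ in condition (vi) is precisely what upgrades $T\Psi$ from a generic element of $Z_j$ to one in $Y_j$, so that $\Psi$ lands in the graph space rather than only in $X_j$. Once this observation is in place, the rest is a standard retract-type interpolation argument.
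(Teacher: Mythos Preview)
Your proof is correct and follows essentially the same route as the paper: your operator $\Psi(x,y)=x-RTx+Ry$ is precisely $Pu+Ry$ with $P:=I-RT$, and the paper interpolates $P:X_j\to X_j(T,Y_j)$ and $R:Y_j\to X_j(T,Y_j)$ separately before writing $u=Pu+RTu$, whereas you bundle them into a single map on the direct sum and invoke the (standard) compatibility of $\mathfrak{F}$ with finite direct sums. The key computations $TPu=-STu\in Y_j$ and $TRy=y+Sy\in Y_j$ are identical in both arguments.
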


This theorem is proved in \cite[Chapter~1, Section~14.3]{LionsMagenes72} for the holomorphic (i.e. complex) method of interpolation between Banach spaces, the proof using the construction of this method. A~version of the theorem for the quadratic interpolation (between Hilbert spaces) is proved in \cite[Section~4]{MikhailetsMurach06UMJ11} (see also \cite[Section~3.3.2]{MikhailetsMurach14}).

\begin{proof}[Proof of Theorem $\ref{th-basic}$.]
The spaces $X_{0}(T,Y_{0})$ and $X_{1}(T,Y_{1})$ are well defined due to conditions (iii) and (iv) and are continuously embedded in $X_0+X_1$. Hence, the space on the left of \eqref{11f1} is well defined. The mapping $T$ is a bounded operator from $\mathfrak{F}[X_{0},X_{1}]$ to $\mathfrak{F}[Z_{0},Z_{1}]$ by condition (iv), and the space $\mathfrak{F}[Y_{0},Y_{1}]$ is continuously embedded in $\mathfrak{F}[Z_{0},Z_{1}]$ by condition~(iii). Thus, the space on the right of \eqref{11f1} is also well defined. Since the quasi-Banach spaces on the left and right of \eqref{11f1} are continuously embedded in $X_0+X_1$, it follows from Closed Graph Theorem that we only need to prove \eqref{11f1} as an equality of linear spaces. (As i known \cite[Section~2]{Kalton03}, this theorem is applicable to quasi-Banach spaces.)

It follows from the continuous embeddings $X_j(T,Y_j)\hookrightarrow X_j$, where $j\in\{0,1\}$, that $\mathfrak{F}[X_{0}(T,Y_{0}),X_{1}(T,Y_{1})]$ is continuously embedded in $\mathfrak{F}[X_{0},X_{1}]$. Since $T$ is a bounded operator from $X_j(T,Y_j)$ to $Y_j$ for each $j\in\{0,1\}$, we conclude that $T$ is a bounded operator from $\mathfrak{F}[X_{0}(T,Y_{0}),X_{1}(T,Y_{1})]$ to $\mathfrak{F}[Y_{0},Y_{1}]$. Hence,
\begin{equation}\label{11f6}
\mathfrak{F}[X_{0}(T,Y_{0}),X_{1}(T,Y_{1})]\subset
(\mathfrak{F}[X_{0},X_{1}])(T,\mathfrak{F}[Y_{0},Y_{1}]).
\end{equation}

To prove the inverse inclusion we introduce the mapping
\begin{equation}\label{11f2}
Pu:=u-RTu,\quad\mbox{with}\quad u\in X_0+X_1.
\end{equation}
This mapping is a bounded operator on $X_j$ for each $j=\{0,1\}$ by
conditions (iv) and (v). It follows from conditions (iv), (vi), and (vii) that, if $u\in X_j$, then
\begin{equation*}
TPu=Tu-TRTu=Tu-(Tu+STu)=-STu\in Y_j;
\end{equation*}
i.e., $u\in X_j$ implies $Pu\in X_j(T,Y_j)$. Moreover,
\begin{equation*}
\|Pu\|_{X_j(T,Y_j)}=\|Pu\|_{X_j}+\|TPu\|_{Y_j}=
\|Pu\|_{X_j}+\|STu\|_{Y_j}\leq c_1\|u\|_{X_j},
\end{equation*}
where the number $c_1>0$ does not depend on $u\in X_j$. Thus, the mapping \eqref{11f2} defines the bounded operators
\begin{equation}\label{11f3}
P:X_j\to X_j(T,Y_j),\quad\mbox{with}\quad j\in\{0,1\}.
\end{equation}

Given $j\in\{0,1\}$, we conclude by conditions (iii) and (v)--(vii) that  $TRw=w+Sw\in Y_j$ for every $w\in Y_j$; i.e., $w\in Y_j$ implies $Rw\in X_j(T,Y_j)$. Moreover,
\begin{align*}
\|Rw\|_{X_j(T,Y_j)}&=\|Rw\|_{X_j}+\|TRw\|_{Y_j}\leq
\|Rw\|_{X_j}+c_2\bigl(\|w\|_{Y_j}+\|Sw\|_{Y_j}\bigr)\\
&\leq c_3\|w\|_{Z_j}+c_2\|w\|_{Y_j}+c_4\|w\|_{Z_j}\leq
c_5\|\omega\|_{Y_j},
\end{align*}
where the positive numbers $c_2,\ldots,c_5$ do not depend on $w\in Y_j$. Thus, the mapping $R$ defines the bounded operators
\begin{equation}\label{11f4}
R:Y_j\to X_j(T,Y_j),\quad\mbox{with}\quad j\in\{0,1\}.
\end{equation}

Applying the interpolation to \eqref{11f3} and \eqref{11f4}, we get the bounded operators
\begin{gather*}
P:\mathfrak{F}[X_{0},X_{1}]\to\mathfrak{F}[X_{0}(T,Y_{0}),X_{1}(T,Y_{1})],\\
R:\mathfrak{F}[Y_{0},Y_{1}]\to\mathfrak{F}[X_{0}(T,Y_{0}),X_{1}(T,Y_{1})].
\end{gather*}
Hence, if $u \in \mathfrak{F}[X_{0},X_{1}]$ and $Tu \in \mathfrak{F}[Y_{0},Y_{1}]$, then
$$
u=Pu+RTu\in\mathfrak{F}[X_{0}(T,Y_{0}),X_{1}(T,Y_{1})]
$$
in view of \eqref{11f2}, which means the inverse of~\eqref{11f6}.
\end{proof}

\begin{theorem}\label{th-sep-dense}
Suppose that quasi-Banach spaces $X$, $Y$, $Z$, and a bounded linear operator $\nobreak{T:X\to Z}$ satisfy the following two conditions: the continuous embedding $Y\hookrightarrow Z$ holds true, and there exist bounded linear operators $R:Z\to X$ and $S:Z\to Y$ such that $TRw=w+Sw$ for every $w\in Z$. Then:
\begin{itemize}
\item[(i)] If $X$ and $Y$ are separable, then $X(T,Y)$ is also separable.
\item[(ii)] If a certain set $M\subset X(T,Y)$ is dense in $X(T,Y)$ in the quasi-norm $\|\cdot\|_{X}$ and $T(M)$ is dense in $Y\cap T(X)$ in the quasi-norm $\|\cdot\|_{Y}$ and, moreover, $R(T(M))\subset M$, then $M$ is dense in the quasi-normed space $X(T,Y)$.
\end{itemize}
\end{theorem}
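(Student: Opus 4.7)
The plan is to build on the auxiliary linear mappings $P := I - RT$ and $R$ used in the proof of Theorem~\ref{th-basic} (cf.~formula \eqref{11f2}). Under the current hypotheses I would first verify that these give bounded linear operators $P : X \to X(T,Y)$ and $R : Y \to X(T,Y)$: the bounds are computed exactly as for \eqref{11f3} and \eqref{11f4}, with the image under $R$ handled via $TRw = w + Sw$ and the embedding $Y \hookrightarrow Z$. The key identity $u = Pu + RTu$, valid for every $u \in X(T,Y)$, splits an arbitrary element into a piece in the image of $P$ and a piece in the image of $R$.

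For Part (i), let $D_X$ and $D_Y$ be countable dense subsets of $X$ and $Y$, respectively. I would show that the countable set $\{Px + Ry : x \in D_X,\ y \in D_Y\}$ is dense in $X(T,Y)$. Indeed, given $u \in X(T,Y)$ one has $Tu \in Y$, so one picks $x \in D_X$ approximating $u$ in $\|\cdot\|_X$ and $y \in D_Y$ approximating $Tu$ in $\|\cdot\|_Y$; then $u - (Px + Ry) = P(u - x) + R(Tu - y)$ is small in $X(T,Y)$ by boundedness of $P$ and $R$ into $X(T,Y)$.

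For Part (ii), given $u \in X(T,Y)$ and $\varepsilon > 0$, I would pick $m_1 \in M$ with $\|u - m_1\|_X < \varepsilon$ (by density of $M$ in the $X$-quasi-norm) and $m_2 \in M$ with $\|Tu - Tm_2\|_Y < \varepsilon$ (by density of $T(M)$ in $Y \cap T(X)$, applicable since $Tu \in Y \cap T(X)$). The candidate approximation is $m := Pm_1 + RTm_2 = m_1 - RTm_1 + RTm_2$, which belongs to $M$ thanks to the closure $R(T(M)) \subset M$ together with the linear structure of $M$. The linearity of $P$ and $R$ gives the clean decomposition
\[
u - m = P(u - m_1) + R(Tu - Tm_2),
\]
so boundedness of $P$ and $R$ into $X(T,Y)$ yields $\|u - m\|_{X(T,Y)} \le C\varepsilon$ up to the quasi-norm constant of $X(T,Y)$.

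The main obstacle is securing $m \in M$: the expression $m_1 - RTm_1 + RTm_2$ lies in $M$ only when $M$ is closed under the relevant linear combinations, which is compatible with but stronger than the bare condition $R(T(M)) \subset M$. This is not a restriction in the intended applications, where $M = C^{\infty}(\overline{\Omega})$ is a vector subspace, but deserves to be kept in mind. Once this is granted, everything else is a routine combination of the boundedness of $P$ and $R$ into $X(T,Y)$ with the prescribed density hypotheses.
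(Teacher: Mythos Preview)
Your proposal is correct and follows essentially the same route as the paper: the paper also invokes the operators $P=I-RT:X\to X(T,Y)$ and $R:Y\to X(T,Y)$ from the proof of Theorem~\ref{th-basic}, then for (i) shows $\{Px+Ry:x\in X^\circ,\,y\in Y^\circ\}$ is dense via $Px_n+Ry_n\to Pu+RTu=u$, and for (ii) picks sequences $u_n,v_n\in M$ with $u_n\to u$ in $X$ and $Tv_n\to Tu$ in $Y$, concluding $Pu_n+RTv_n\to u$ in $X(T,Y)$ with $Pu_n=u_n-RTu_n\in M$ and $RTv_n\in M$ by the hypothesis $R(T(M))\subset M$. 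Your observation about needing $M$ to be closed under the relevant linear combinations is well taken; the paper's proof tacitly relies on the same closure (to get $u_n-RTu_n\in M$ and then $Pu_n+RTv_n\in M$), and, as you note, this holds in the intended application $M=C^{\infty}(\overline{\Omega})$.
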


\begin{proof}
Let us use the proof of Theorem~\ref{th-basic} in the case where $X_{0}=X_{1}=X$, $Y_{0}=Y_{1}=Y$, and $Z_{0}=Z_{1}=Z$. According to \eqref{11f2}--\eqref{11f4} we have the bounded operators
\begin{equation}\label{P-R-operators}
P:=I-RT:X\to X(T,Y)\quad\mbox{and}\quad R:Y\to X(T,Y),
\end{equation}
with $I$ being the identity operator on $X$.

Let us prove assertion (i). Suppose that $X$ and $Y$ are separable; i.e., there exist countable sets $X^\circ\subset X$ and $Y^\circ\subset Y$ such that $X^\circ$ is dense in $X$ and that $Y^\circ$ is dense in $Y$. Using \eqref{P-R-operators}, form the countable set
\begin{equation*}
Q:=\{Px+Ry:x\in X^\circ, y\in Y^\circ\}\subset X(T,Y)
\end{equation*}
and show that $Q$ is dense in the quasi-normed space $X(T,Y)$. For every $u\in X(T,Y)$ there exist sequences of vectors $x_n\in X^\circ$ and $y_n\in Y^\circ$ such that $x_n\to u$ in $X$ and that $y_n\to Tu$ in $Y$, as $n\to\infty$. Hence,
\begin{equation*}
Q\ni Px_n+Ry_n\to Pu+RTu=u\quad\mbox{in}\quad X(T,Y).
\end{equation*}
Thus, $X(T,Y)$ is separable.

Let us prove assertion (ii). Suppose that the hypotheses stated in this assertion are valid. Then for arbitrary $u\in X(T,Y)$ there exist sequences of vectors $u_n\in M$ and $v_n\in M$ such that $u_n\to u$ in $X$ and that $Tv_n\to Tu$ in $Y$, as $n\to\infty$. Using the bounded operators \eqref{P-R-operators}, we conclude that
\begin{equation*}
Pu_n+RTv_n\to Pu+RTu=u\quad\mbox{in}\quad X(T,Y).
\end{equation*}
Here, the vectors $Pu_n=u_n-RTu_n$ and $RTv_n$ belong to $M$ by
the hypothesis $R(T(M))\subset M$. Thus, $M$ is dense in the quasi-normed space $X(T,Y)$.
\end{proof}

\section{Main results}\label{sec3}

Let $1\leq n\in\mathbb{Z}$. As usual, $B^{s}_{p,q}(\mathbb{R}^{n})$ denotes the Besov space over $\mathbb{R}^{n}$ with smoothness index or (differentiation) order $s$, integral-exponent $p$ and sum-exponent $q$, and  $F^{s}_{p,q}(\mathbb{R}^{n})$ denotes the Triebel--Lizorkin space over $\mathbb{R}^{n}$ with the same index and exponents (see, e.g., \cite[Section~2.3.1]{Triebel83} or \cite[Section~1.3]{Triebel06}). We consider complex distribution spaces and suppose that
\begin{equation}\label{indexes-case}
\begin{gathered}
s\in\mathbb{R},\quad 0<p\leq\infty, \quad 0<q\leq\infty,\\
\mbox{with}\;\;p\neq\infty\;\;\mbox{for the Triebel--Lizorkin spaces}.
\end{gathered}
\end{equation}
Note that $B^{s}_{p,p}(\mathbb{R}^{n})=F^{s}_{p,p}(\mathbb{R}^{n})$ with equality of norms. Let $\mathcal{U}(\mathbb{R}^{n})$ stand for the class of all quasi-Banach spaces $B^{s}_{p,q}(\mathbb{R}^{n})$ and $F^{s}_{p,q}(\mathbb{R}^{n})$ subject to~\eqref{indexes-case}. They are Banach spaces if and only if $p\geq1$ and $q\geq1$.

The class $\mathcal{U}(\mathbb{R}^{n})$ contains the Lebesgue spaces $L_{p}(\mathbb{R}^{n})=F^{0}_{p,2}(\mathbb{R}^{n})$ where $1<p<\infty$, the local Hardy spaces $h_{p}(\mathbb{R}^{n})=F^{0}_{p,2}(\mathbb{R}^{n})$ where $0<p<\infty$, the Sobolev spaces $H^{s}_{p}(\mathbb{R}^{n})=F^{s}_{p,2}(\mathbb{R}^{n})$ where $s\in\mathbb{R}$ and $1<p<\infty$, the Nikolskii spaces $B^{s}_{p,\infty}(\mathbb{R}^{n})$ where $s>0$ and $1\leq p<\infty$,  the Zygmund spaces $\mathcal{C}^{s}(\mathbb{R}^{n})=B^{s}_{\infty,\infty}(\mathbb{R}^{n})$ where $s>0$, and the H\"older spaces $C^{s}(\mathbb{R}^{n})=B^{s}_{\infty,\infty}(\mathbb{R}^{n})$ where $0<s\notin\mathbb{Z}$, the last two spaces being built on the base of the space of bounded uniformly continuous functions on $\mathbb{R}^{n}$ (see, e.g., \cite[Sections 2.3.5]{Triebel83} or \cite[Section~1.2]{Triebel06}). Every space from $\mathcal{U}(\mathbb{R}^{n})$ is continuously embedded in the linear topological space $\mathcal{S}'(\mathbb{R}^{n})$ of tempered distributions in~$\mathbb{R}^{n}$.

Let $\Omega$ be a nonempty open subset of $\mathbb{R}^{n}$. If $V(\mathbb{R}^{n})$ is a quasi-Banach (resp., Banach) space from the class $\mathcal{U}(\mathbb{R}^{n})$, we consider another quasi-Banach (resp., Banach) space
\begin{equation*}
V(\Omega):=\bigl\{w\!\upharpoonright\!\Omega:
w\in V(\mathbb{R}^{n})\bigr\}
\end{equation*}
endowed with the quasi-norm (resp., norm)
\begin{equation*}
\|u\|_{V(\Omega)}:=
\inf\bigl\{\|w\|_{V(\mathbb{R}^{n})}:w\in
V(\mathbb{R}^{n}),\;u=w\!\upharpoonright\!\Omega\bigr\}
\end{equation*}
where $u\in V(\Omega)$. As usual, $w\!\upharpoonright\!\Omega$ denotes the restriction of the distribution $w\in\mathcal{S}'(\mathbb{R}^{n})$ to~$\Omega$. We thus obtain the Besov space $B^{s}_{p,q}(\Omega)$ and Triebel--Lizorkin space $F^{s}_{p,q}(\Omega)$ over $\Omega$. Let $\mathcal{U}(\Omega)$ stand for the class of all these spaces subject to \eqref{indexes-case}. They are continuously embedded in the linear topological space $\mathcal{D}'(\Omega)$ of all distributions in~$\Omega$.

We suppose hereafter that $n\geq2$. Consider a linear PDO
$$
A:=\sum_{|\mu|\leq 2l}a_{\mu}(x)\,
\frac{\partial^{|\mu|}}{\partial^{\mu_{1}}_{x_{1}}\cdots
\partial^{\mu_{n}}_{x_{n}}}
$$
of even order $2l\geq2$ with complex-valued coefficients $a_{\mu}\in C^{\infty}(\overline{\Omega})$. As usual, $\mu:=(\mu_{1},\ldots,\mu_{n})$ is a multi-index with integer-valued nonnegative components, $|\mu|:=\mu_{1}+\cdots+\mu_{n}$, $x=(x_1,\ldots,x_n)$ is an arbitrary point in the closure $\overline{\Omega}$ of $\Omega$, and $C^{\infty}(\overline{\Omega})$ consists of restrictions of all functions from $C^{\infty}(\mathbb{R}^{n})$ to $\overline{\Omega}$. We suppose that $A$ is elliptic in $\overline{\Omega}$, i.e.
$$
\sum_{|\mu|=2l}a_{\mu}(x)\,
\xi_{1}^{\mu_{1}}\cdots\xi_{n}^{\mu_{n}}\neq0
$$
whenever $x\in\overline{\Omega}$ and  $\xi=(\xi_{1},\ldots,\xi_{n})\in\mathbb{R}^{n}\setminus\{0\}$. If $n=2$,  we suppose in addition that $A$ is properly elliptic in $\overline{\Omega}$ (see, e.g., \cite[Chapter~2, Section~1.2]{LionsMagenes72} or \cite[Section~5.2.1]{Triebel95}). The operator $A$ acts continuously on $\mathcal{D}'(\Omega)$. Specifically, if $\Omega$ is bounded, then $A$ is a bounded operator from $B^{s}_{p,q}(\Omega)$ to $B^{s-2l}_{p,q}(\Omega)$ and from $F^{s}_{p,q}(\Omega)$ to $F^{s-2l}_{p,q}(\Omega)$ under condition \eqref{indexes-case}; this follows from \cite[Theorems 2.3.8 and 2.8.2]{Triebel83}.

If quasi-Banach spaces $X$ and $Y$ are continuously embedded in $\mathcal{D}'(\Omega)$, the quasi-Banach space $X(A,Y)$ is well defined by formulas \eqref{space-X(T,Y)} and \eqref{norm-X(T,Y)} in which $T:=A$ and $\Upsilon:=\mathcal{D}'(\Omega)$.

We say that a quasi-Banach space $Y\hookrightarrow\mathcal{D}'(\Omega)$ is admissible if $E\hookrightarrow Y$ for a certain space $E$ from the class $\mathcal{U}(\Omega)$, the embeddings being continuous.

As above, $\mathfrak{F}$ is an arbitrary interpolation functor defined on the category of all interpolation pairs of quasi-Banach spaces (resp., Banach spaces). We suppose henceforth in this section that
\begin{equation}\label{Lipschitz-boundary}
\mbox{$\Omega$ is a bounded domain in $\mathbb{R}^{n}$ with Lipschitz boundary.}
\end{equation}

\begin{theorem}\label{main-interp-th}
Let $X_0$ and $X_1$ be quasi-Banach (resp., Banach) spaces from the class $\mathcal{U}(\Omega)$, and let $Y_0$ and $Y_1$ be admissible quasi-Banach (resp., Banach) spaces continuously embedded in $\mathcal{D}'(\Omega)$. Given $j\in\{0,1\}$, we put $Z_j:=B^{s-2l}_{p,q}(\Omega)$ if $X_j:=B^{s}_{p,q}(\Omega)$ and put
$Z_j:=F^{s-2l}_{p,q}(\Omega)$ if $X_j:=F^{s}_{p,q}(\Omega)$ (of course, $s$, $p$, and $q$ depends on $j$). Then
\begin{equation}\label{interp-XY-XY}
\mathfrak{F}[X_0(A,Y_0),X_1(A,Y_1)]=
(\mathfrak{F}[X_0,X_1])(A,\mathfrak{F}[Y_{0}\cap Z_{0},Y_{1}\cap Z_{1}])
\end{equation}
up to equivalence of quasi-norms.
\end{theorem}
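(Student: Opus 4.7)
The plan is to deduce Theorem~\ref{main-interp-th} from the abstract interpolation Theorem~\ref{th-basic} applied with $T:=A$. The preliminary observation is that, since $A\colon X_{j}\to Z_{j}$ is bounded by \cite[Theorems 2.3.8 and 2.8.2]{Triebel83}, one has the identification $X_{j}(A,Y_{j})=X_{j}(A,Y_{j}\cap Z_{j})$ with equivalent graph quasi-norms: the extra summand $\|Au\|_{Z_{j}}$ in the intersection norm is already controlled by $\|u\|_{X_{j}}$. Setting $\widetilde{Y}_{j}:=Y_{j}\cap Z_{j}$, it therefore suffices to establish
\begin{equation*}
\mathfrak{F}[X_{0}(A,\widetilde{Y}_{0}),X_{1}(A,\widetilde{Y}_{1})]=
(\mathfrak{F}[X_{0},X_{1}])(A,\mathfrak{F}[\widetilde{Y}_{0},\widetilde{Y}_{1}]),
\end{equation*}
whose right-hand side is precisely the one appearing in~\eqref{interp-XY-XY}. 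Hypotheses (i)--(iv) of Theorem~\ref{th-basic} are then immediate: every space in sight embeds continuously into $\mathcal{D}'(\Omega)$, $\widetilde{Y}_{j}\hookrightarrow Z_{j}$ by the very definition of $\widetilde{Y}_{j}$, and the boundedness of $A\colon X_{j}\to Z_{j}$ is the cited Triebel result.

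The substance of the proof lies in producing the mappings $R$ and $S$ of~(v)--(vii); both will be manufactured from a global parametrix. I would first extend $A$ to a properly elliptic PDO $\widetilde{A}$ on $\mathbb{R}^{n}$ with $C^{\infty}$-coefficients that coincide with those of $A$ in a neighbourhood of $\overline{\Omega}$ and stabilise to those of $(-\Delta)^{l}$ outside a compact set; for $n\geq 3$ ellipticity automatically entails proper ellipticity, and the case $n=2$ is precisely why proper ellipticity was assumed before the theorem. The pseudodifferential calculus then furnishes a right parametrix $\widetilde{R}$ of order $-2l$ with $\widetilde{A}\widetilde{R}=I+\widetilde{S}$ on $\mathcal{S}'(\mathbb{R}^{n})$ and $\widetilde{S}$ of order~$-\infty$. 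On the domain side I would invoke Rychkov's universal extension operator $\mathcal{E}$, which is simultaneously bounded $B^{s}_{p,q}(\Omega)\to B^{s}_{p,q}(\mathbb{R}^{n})$ and $F^{s}_{p,q}(\Omega)\to F^{s}_{p,q}(\mathbb{R}^{n})$ for all indices in~\eqref{indexes-case}, and set $Rw:=(\widetilde{R}\mathcal{E}w)\!\upharpoonright\!\Omega$ and $Sw:=(\widetilde{S}\mathcal{E}w)\!\upharpoonright\!\Omega$ for $w\in Z_{0}+Z_{1}$. The mapping properties of pseudodifferential operators on the Besov and Triebel--Lizorkin scales deliver $R\colon Z_{j}\to X_{j}$ (condition~(v)); the $-\infty$-smoothing property of $\widetilde{S}$ gives $S\colon Z_{j}\to V(\Omega)$ boundedly for every $V(\Omega)\in\mathcal{U}(\Omega)$, and then the admissibility hypothesis on $Y_{j}$ supplies such a $V$ with $V(\Omega)\hookrightarrow Y_{j}$, which together with $V(\Omega)\hookrightarrow Z_{j}$ yields $S\colon Z_{j}\to\widetilde{Y}_{j}$ (condition~(vi)); relation (vii) follows by restricting $\widetilde{A}\widetilde{R}=I+\widetilde{S}$ to $\Omega$ and using that $\widetilde{A}$ coincides with $A$ on distributions supported there.

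The hardest step, I expect, is securing the boundedness of $R$ in the full quasi-Banach regime: one has to invoke mapping theorems for H\"ormander-type pseudodifferential operators on Besov and Triebel--Lizorkin spaces with $p,q\in(0,\infty]$ and arbitrary $s\in\mathbb{R}$, rather than any $L_{p}$-based calculus, and to combine them consistently with Rychkov's simultaneous extension so that a single pair $(R,S)$ serves both values $j=0,1$. Once these ingredients are in place, Theorem~\ref{th-basic} delivers the displayed identity, and the preliminary reduction converts it into the asserted equivalence~\eqref{interp-XY-XY}.
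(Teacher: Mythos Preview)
Your approach is correct and takes a genuinely different route from the paper's. Both proofs make the same preliminary reduction to $\widetilde Y_j=Y_j\cap Z_j$ and both aim at hypotheses (v)--(vii) of Theorem~\ref{th-basic}, but the constructions of $R$ and $S$ diverge completely. The paper never touches pseudodifferential parametrices: it first assumes $\partial\Omega\in C^\infty$ and exploits that the Dirichlet problem for $A^{r}A^{r\ast}+I$ yields an isomorphism $E^{\sigma}_{p,q}(\Omega,2lr)\leftrightarrow E^{\sigma-4lr}_{p,q}(\Omega)$ (via the Fredholm theory of \cite{FrankeRunst95,Johnsen96}), setting $S:=-(A^{r}A^{r\ast}+I)^{-1}$ and $R:=A^{r-1}A^{r\ast}(A^{r}A^{r\ast}+I)^{-1}$; the general Lipschitz case is then reduced to this by enclosing $\Omega$ in a smooth super-domain $\Omega^{\times}$ and transferring with Rychkov's extension operator. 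Your route via a global parametrix of an elliptic extension $\widetilde A$ on $\mathbb{R}^{n}$ is more direct---one step instead of two, no boundary-value-problem machinery, and in fact no use of proper ellipticity (your remark about $n=2$ is unnecessary: proper ellipticity matters only for boundary problems, not for parametrices on $\mathbb{R}^{n}$). The cost is exactly what you flag: you must invoke mapping theorems for operators in $OPS^{-2l}_{1,0}$ on $B^{s}_{p,q}(\mathbb{R}^{n})$ and $F^{s}_{p,q}(\mathbb{R}^{n})$ across the full quasi-Banach range, whereas the paper stays within the elliptic boundary-value theory on these scales, which it needs anyway in Section~\ref{sec4}.
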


\begin{proof}
\emph{Step~$1$.} Without loss of generality we may suppose that the continuous embeddings
\begin{equation}\label{assump-embedd}
Y_{0}\hookrightarrow Z_{0}\quad\mbox{and}\quad Y_{1}\hookrightarrow Z_{1}
\end{equation}
hold true and should prove that
\begin{equation}\label{interp-XY-XY-assump}
\mathfrak{F}[X_0(A,Y_0),X_1(A,Y_1)]=
(\mathfrak{F}[X_0,X_1])(A,\mathfrak{F}[Y_{0},Y_{1}]).
\end{equation}
Indeed, it follows from \cite[Theorems~2.3.8 and~2.8.2]{Triebel83} that $A$ is a bounded operator from $X_j$ to $Z_j$. Hence, the space $X_j(A,Y_j)$ is equal to $X_j(A,Y_j\cap Z_j)$ up to equivalence of quasi-norms for each $j\in\{0,1\}$, and the left-hand side of \eqref{interp-XY-XY} becomes
\begin{equation*}
\mathfrak{F}[X_0(A,Y_0\cap Z_0),X_1(A,Y_1\cap Z_1)],
\end{equation*}
where each space $Y_j\cap Z_j$ is admissible and continuously embedded in $Z_j$. We may therefore take $Y_j\cap Z_j$ instead of $Y_j$ in our considerations. Thus, we suppose in what follows that \eqref{assump-embedd} is valid and will prove \eqref{interp-XY-XY-assump}. Note that we only need to prove \eqref{interp-XY-XY-assump} as an equality of linear spaces because the equivalence of quasi-norms in the corresponding quasi-Banach spaces follows from this equality since they are continuously embedded in $\mathcal{D}'(\Omega)$.

\emph{Step~$2$.} Here, we consider the special case where the boundary $\partial\Omega$ of $\Omega$ is of class $C^{\infty}$. Let $E$ stand for either $B$ or $F$ used in the designations of Besov spaces or Triebel--Lizorkin spaces, and let $p$ and $q$ satisfy \eqref{indexes-case}. Given an integer $r\geq1$, we consider the PDO $A^{r}A^{r\ast}+I$ of order $4lr$; here, $A^{r\ast}$ is the formally adjoint PDO to the $r$-th iteration $A^{r}$ of~$A$, and $I$ is the identity operator. Put
\begin{equation}\label{trace-th-parameter}
\pi(p,n):=\frac{1}{p}+
\max\biggl\{0,(n-1)\biggl(\frac{1}{p}-1\biggl)\biggr\}
\end{equation}
in view of the trace theorem for these spaces \cite[Theorem~3.3.3]{Triebel83}. Choosing a real number
\begin{equation}\label{sigma-cond}
\sigma>2lr-1+\pi(p,n)
\end{equation}
arbitrarily, we let $E^{\sigma}_{p,q}(\Omega,2lr)$ denote the linear space of all distributions $u\in E^{\sigma}_{p,q}(\Omega)$ such that $\partial^{j-1}_{\nu}u=0$ on $\partial\Omega$ for each $j\in\{1,\ldots,2lr\}$. Here, $\partial_{\nu}$ is the operator of differentiation along the inward normal to $\partial\Omega$. The set $E^{\sigma}_{p,q}(\Omega,2lr)$ is well defined and is a (closed) subspace of the Banach space $E^{\sigma}_{p,q}(\Omega)$ according to the above-mentioned trace theorem.

The PDO $A^{r}A^{r\ast}+I$ sets an isomorphism
\begin{equation}\label{isom-special}
A^{r}A^{r\ast}+I:E^{\sigma}_{p,q}(\Omega,2lr)\leftrightarrow
E^{\sigma-4lr}_{p,q}(\Omega)
\end{equation}
whenever $\sigma$ satisfies \eqref{sigma-cond}. Indeed, since the Dirichlet boundary-value problem
\begin{gather*}
A^{r}A^{r\ast}u+u=f\;\;\mbox{in}\;\;\Omega,\\
\partial^{j-1}_{\nu}u=g_{j}\;\;\mbox{on}\;\;\partial\Omega,\quad j=1,\ldots,2lr,
\end{gather*}
is elliptic in $\overline{\Omega}$, the PDO $A^{r}A^{r\ast}+I$ is a Fredholm bounded operator between the spaces indicated in \eqref{isom-special}. Moreover, the kernel and index of this operator do not depend on $\sigma$, $p$, and $q$. This is proved in \cite[Theorem~14]{FrankeRunst95} (and also follows from \cite[Theorem~5.2]{Johnsen96}, the latter article concerning pseudodifferential elliptic problems). As is known \cite[Chapter~2, Section~8.5]{LionsMagenes72}, the index of any elliptic Dirichlet problem is zero. Moreover, the kernel of the above problem is trivial \cite[Lemma~3.1]{MikhailetsMurach14}. This implies the required isomorphism \eqref{isom-special}. The inverse of \eqref{isom-special} sets an isomorphism
\begin{equation}\label{isom-invers}
(A^{r}A^{r\ast}+I)^{-1}:E^{\lambda}_{p,q}(\Omega)\leftrightarrow E^{\lambda+4lr}_{p,q}(\Omega,2lr)
\end{equation}
for every real $\lambda>-2lr-1+\pi(p,n)$.

Given $j\in\{0,1\}$, we let $X_j:=E^{s}_{p,q}(\Omega)$ for some parameters $s$, $p$, and $q$ subject to \eqref{indexes-case}, with the parameters and $E\in\{B,F\}$ depending on $j$. Since the space $Y_j$ is admissible and since the domain $\Omega$ is bounded, it follows from \cite[Theorem~3.3.1]{Triebel83} that the continuous embedding $E^{s-2l+4lr}_{p,q}(\Omega)\hookrightarrow Y_j$ holds true if the integer $r\geq1$ is large enough. We may and do choose $r$ to be independent of $j$ and suppose in addition that $s-2l>-2lr-1+\pi(p,n)$. Owing to \eqref{isom-invers}, the mapping $S:w\mapsto-(A^{r}A^{r\ast}+I)^{-1}w$ sets the bounded operator
\begin{equation*}
S:Z_j=E^{s-2l}_{p,q}(\Omega)\to E^{s-2l+4lr}_{p,q}(\Omega)
\hookrightarrow Y_j.
\end{equation*}
Hence, the mapping $R:w\mapsto A^{r-1}A^{r\ast}Sw$ sets the bounded operator
\begin{equation*}
R:Z_j\to E^{s-2l+4lr-2l(r-1)-2lr}_{p,q}(\Omega)=E^{s}_{p,q}(\Omega)=X_j.
\end{equation*}
Thus, the spaces $X_{0}$, $Y_{0}$, $Z_{0}$, $X_{1}$, $Y_{1}$, and $Z_{1}$ and  mappings $T:=A$, $R$, and $S$ satisfy hypotheses (i)--(vi) of Theorem~\ref{th-basic}. The last hypothesis (vii) is also satisfied because
\begin{equation*}
TRw=ARw=(A^{r}A^{r\ast}+I-I)(A^{r}A^{r\ast}+I)^{-1}w=w+Sw
\end{equation*}
for every $w\in Z$. Hence, the required equality \eqref{interp-XY-XY-assump} holds true (with equivalence of norms) by this theorem.

\emph{Step~$3$.} Using the result obtained on Step~2, we here treat the general case \eqref{Lipschitz-boundary}. By assumption, all coefficients of the PDO $A$ have extensions of class $C^{\infty}(\mathbb{R}^{n})$. Replacing the coefficients with these extensions, we define a certain PDO $A^{\diamond}$ in $\mathbb{R}^{n}$. Since $A$ is elliptic in $\overline{\Omega}$, the PDO $A^{\diamond}$ is elliptic on the set $\Theta_{\varepsilon}:=\{x\in\mathbb{R}^{n}:
\mathrm{dist}(x,\overline{\Omega})\leq\varepsilon\}$ for some $\varepsilon>0$. Then $A^{\diamond}$ is properly elliptic on $\Theta_{\varepsilon}$ as to the $n=2$ case (see \cite[Chapter~2, Remark~1.2]{LionsMagenes72}). Since $\Omega$ has Lipschitz boundary, there exists an (open) domain $\Omega^{\times}$ such that its boundary $\partial\Omega^{\times}$ is of class $C^{\infty}$ and that $\Omega\subset\Omega^{\times}\subset\Theta_{\varepsilon}$ (see, e.g, \cite[Assertion~9.1.5]{Agranovich13}). Let $A^{\times}$ stand for the restriction of the PDO $A^{\diamond}$ to the closure of $\Omega^{\times}$.

Given $j\in\{0,1\}$, we put $X^{\times}_j:=B^{s}_{p,q}(\Omega^{\times})$ and $Z^{\times}_j:=B^{s-2l}_{p,q}(\Omega^{\times})$ if $X_j:=B^{s}_{p,q}(\Omega)$ and put $X^{\times}_j:=F^{s}_{p,q}(\Omega^{\times})$ and
$Z^{\times}_j:=F^{s-2l}_{p,q}(\Omega^{\times})$ if $X_j:=F^{s}_{p,q}(\Omega)$. Thus, the PDO $A^{\times}$ acts continuously from $X^{\times}_j$ to $Z^{\times}_j$. We also introduce the linear space
\begin{equation*}
Y^{\times}_j:=\{v\in Z^{\times}_j:v\!\upharpoonright\!\Omega\in Y_{j}\}
\end{equation*}
endowed with the quasi-norm
\begin{equation*}
\|v\|_{Y^{\times}_j}:=\|v\|_{Z^{\times}_j}+
\|v\!\upharpoonright\!\Omega\|_{Y_{j}}.
\end{equation*}
Note that $Y^{\times}_j=Z^{\times}_{j}(K,Y_{j})$, where  $K$ denotes the operator which maps every distribution $v\in\mathcal{D}'(\Omega^{\times})$ into its restriction $v\!\upharpoonright\!\Omega$. Hence, the quasi-normed space $Y^{\times}_j$ is complete. It is admissible (as a space continuously embedded in $\mathcal{D}'(\Omega^{\times})$). Indeed, since $Y_j$ is admissible, the continuous embeddings $B^{\sigma}_{p,q}(\Omega)\hookrightarrow Y_j$ and $B^{\sigma}_{p,q}(\Omega^{\times})\hookrightarrow Z^{\times}_j$ hold true provided that $\sigma\gg1$; therefore, $B^{\sigma}_{p,q}(\Omega^{\times})\hookrightarrow Y^{\times}_j$.

Owing to Step~2,
\begin{equation}\label{interp-star-spaces}
\mathfrak{F}[X^{\times}_0(A^{\times},Y^{\times}_0),
X^{\times}_1(A^{\times},Y^{\times}_1)]=
(\mathfrak{F}[X^{\times}_0,X^{\times}_1])
(A^{\times},\mathfrak{F}[Y^{\times}_{0},Y^{\times}_{1}])
\end{equation}
up to equivalence of quasi-norms. The result obtained on Step~2 is applicable here because $\partial\Omega^{\times}$ is of class $C^{\infty}$. Let us show that the images of the left and right sides of \eqref{interp-star-spaces} under the mapping $K$ coincide with the corresponding sides of \eqref{interp-XY-XY-assump}.

This mapping sets bounded operators
\begin{equation}\label{restriction-oper-X-Y}
K:X^{\times}_j\to X_j\quad\mbox{and}\quad K:Y^{\times}_j\to Y_j
\end{equation}
and
\begin{equation}\label{restriction-oper}
K:X^{\times}_j(A^{\times},Y^{\times}_j)\to X_j(A,Y_j),
\end{equation}
with $j\in\{0,1\}$. They have a common right-inverse linear operator. Indeed, since $\Omega$ has Lipschitz boundary, there exists a linear mapping
$J:\mathcal{D}'(\Omega)\to\mathcal{S}'(\mathbb{R}^{n})$ that extends every distribution $u\in\mathcal{D}'(\Omega)$ to $\mathbb{R}^{n}$ and that sets a bounded operator from $E^{s}_{p,q}(\Omega)$ to $E^{s}_{p,q}(\mathbb{R}^{n})$ if $E\in\{B,F\}$ and whenever $s$, $p$, and $q$ satisfy \eqref{indexes-case}. This extension operator was built in \cite[Theorem~2.2]{Rychkov99}. Hence, the mapping $J^{\times}:u\mapsto(Ju)\!\upharpoonright\!\Omega^{\times}$, where $u\in\mathcal{D}'(\Omega)$, satisfies the condition $KJ^{\times}u=u$ for every $u\in\mathcal{D}'(\Omega)$ and sets bounded operators
\begin{equation}\label{extension-oper-X-Y}
J^{\times}:X_j\to X^{\times}_j\quad\mbox{and}\quad
J^{\times}:Y_j\to Y^{\times}_j
\end{equation}
and
\begin{equation}\label{extension-oper}
J^{\times}:X_j(A,Y_j)\to X^{\times}_j(A^{\times},Y^{\times}_j)
\end{equation}
for each $j\in\{0,1\}$.

Applying interpolation functor $\mathfrak{F}$ to the operators \eqref{restriction-oper} and \eqref{extension-oper} where $j\in\{0,1\}$, we conclude that the mappings $K$ and $J^{\times}$ set bounded operators
\begin{equation*}
K:\mathfrak{F}[X^{\times}_0(A^{\times},Y^{\times}_0),
X^{\times}_1(A^{\times},Y^{\times}_1)]\to
\mathfrak{F}[X_0(A,Y_0),X_1(A,Y_1)]
\end{equation*}
and
\begin{equation*}
J^{\times}:\mathfrak{F}[X_0(A,Y_0),X_1(A,Y_1)]\to
\mathfrak{F}[X^{\times}_0(A^{\times},Y^{\times}_0),
X^{\times}_1(A^{\times},Y^{\times}_1)].
\end{equation*}
Hence,
\begin{equation}\label{restriction-left-side}
K\bigl(\mathfrak{F}[X^{\times}_0(A^{\times},Y^{\times}_0),
X^{\times}_1(A^{\times},Y^{\times}_1)]\bigr)=
\mathfrak{F}[X_0(A,Y_0),X_1(A,Y_1)].
\end{equation}

Applying the interpolation functor $\mathfrak{F}$ to each of the operators \eqref{restriction-oper-X-Y} and \eqref{extension-oper-X-Y} where $j\in\{0,1\}$, we see that the restriction mapping gives bounded operators
\begin{equation*}
K:\mathfrak{F}[X^{\times}_0,X^{\times}_1]\to\mathfrak{F}[X_0,X_1]
\quad\mbox{and}\quad
K:\mathfrak{F}[Y^{\times}_0,Y^{\times}_1]\to\mathfrak{F}[Y_0,Y_1],
\end{equation*}
whereas the extension mapping sets bounded operators
\begin{equation*}
J^{\times}:\mathfrak{F}[X_0,X_1]\to\mathfrak{F}[X^{\times}_0,X^{\times}_1]
\quad\mbox{and}\quad
J^{\times}:\mathfrak{F}[Y_0,Y_1]\to\mathfrak{F}[Y^{\times}_0,Y^{\times}_1].
\end{equation*}
Thus,
\begin{equation*}
K\bigl(\mathfrak{F}[X^{\times}_0,X^{\times}_1]\bigr)=\mathfrak{F}[X_0,X_1]
\quad\mbox{and}\quad
K\bigl(\mathfrak{F}[Y^{\times}_0,Y^{\times}_1]\bigr)=\mathfrak{F}[Y_0,Y_1].
\end{equation*}
Moreover, according to Theorem~\ref{th-basic}, we conclude that
\begin{equation}\label{interp-Y-star}
\mathfrak{F}[Y^{\times}_0,Y^{\times}_1]=
\mathfrak{F}[Z^{\times}_{0}(K,Y_{0}),Z^{\times}_{1}(K,Y_{1})]=
(\mathfrak{F}[Z^{\times}_{0},Z^{\times}_{1}])(K,\mathfrak{F}[Y_0,Y_1])
\end{equation}
putting $X_j=Z^{\times}_{j}$, $T:=K$, $R:=J^{\times}$, and $S:=0$ in this theorem.

Therefore, if $u=Kv$ for a certain distribution $v\in\mathfrak{F}[X^{\times}_0,X^{\times}_1]$ such that $A^{\times}v\in\mathfrak{F}[Y^{\times}_{0},Y^{\times}_{1}]$, then $u\in\mathfrak{F}[X_0,X_1]$ and $Au=KA^{\times}v\in\mathfrak{F}[Y_0,Y_1]$. Conversely, if $u\in\mathfrak{F}[X_0,X_1]$ and $Au\in\mathfrak{F}[Y_0,Y_1]$, then
$u=Kv$ for certain $v\in\mathfrak{F}[X^{\times}_0,X^{\times}_1]$,
which implies that $A^{\times}v\in\mathfrak{F}[Z^{\times}_{0},Z^{\times}_{1}]$ and $KA^{\times}v=Au\in\mathfrak{F}[Y_0,Y_1]$, i.e. $A^{\times}v\in\mathfrak{F}[Y^{\times}_0,Y^{\times}_1]$ by \eqref{interp-Y-star}. Thus,
\begin{equation}\label{restriction-right-side}
K\bigl((\mathfrak{F}[X^{\times}_0,X^{\times}_1])
(A^{\times},\mathfrak{F}[Y^{\times}_{0},Y^{\times}_{1}])\bigr)=
(\mathfrak{F}[X_0,X_1])(A,\mathfrak{F}[Y_{0},Y_{1}]).
\end{equation}

Now the required equality \eqref{interp-XY-XY-assump} follows from \eqref{interp-star-spaces}, \eqref{restriction-left-side}, and \eqref{restriction-right-side}.
\end{proof}

\begin{remark}\label{rem-X0-interp}
Theorem~\ref{main-interp-th} also describes the interpolation between $X_0$ and $X_1(A,Y_1)$ because $X_0=X_0(A,Z_0)$ up to equivalence of norms.
Thus,
\begin{equation*}
\mathfrak{F}[X_0,X_1(A,Y_1)]=
(\mathfrak{F}[X_0,X_1])(A,\mathfrak{F}[Z_{0},Y_{1}\cap Z_{1}]).
\end{equation*}
Analogously
\begin{equation*}
\mathfrak{F}[X_0(A,Y_0),X_1]=
(\mathfrak{F}[X_0,X_1])(A,\mathfrak{F}[Y_{0}\cap Z_{0},Z_{1}]).
\end{equation*}
\end{remark}

Let us apply Theorem~\ref{main-interp-th} to some known interpolation functors, we restricting ourselves to the important case where both $Y_0$ and $Y_1$ belong to $\mathcal{U}(\Omega)$. We first consider the real interpolation between quasi-Banach spaces and the complex interpolation between Banach spaces, the most applicable interpolation functors in the theory of function spaces (see, e.g., \cite{Amann19, Triebel95}). Then we examine the more exotic $\pm$-interpolation functor introduced by Gustavsson and Peetre \cite{GustavssonPeetre77} because it is applied to quasi-Banach spaces and retains the class of Triebel--Lizorkin spaces \cite[p.~134]{FrazierJawerth90} (in contrast to the real interpolation).

Using designations like $X(\Omega)$ and $Y(\Omega)$ for quasi-Banach spaces continuously embedded in $\mathcal{D}'(\Omega)$, we prefer to exploit more concise notation $X(A,Y,\Omega)$ for the quasi-Banach space $(X(\Omega))(A,Y(\Omega))$. Thus,
\begin{equation*}
E^{s}_{p,q}(A,G^{\alpha}_{\beta,\gamma},\Omega):=
\bigl\{u\in E^{s}_{p,q}(\Omega):
Au\in G^{\alpha}_{\beta,\gamma}(\Omega)\bigr\}
\end{equation*}
with the corresponding graph quasi-norm; here, $E$ and $G$ stand for either $B$ or $F$ in designations of Besov or Triebel--Lizorkin spaces, whereas the number parameters take on admissible values for these spaces.

We begin with the case where $\mathfrak{F}[X_0,X_1]=(X_0,X_1)_{\theta,q}$ is the functor of the real interpolation with the parameters $\theta$ and $q$ subject to $0<\theta<1$ and $0<q\leq\infty$.

\begin{theorem}\label{R-interp}
Let $E,G,V,W\in\{B,F\}$, and let $s_j,\alpha_j\in\mathbb{R}$ and  $p,\beta, q_j,\gamma_j\in(0,\infty]$ for each $j\in\{0,1\}$. Assume that $s_0\neq s_1$ and $\alpha_0\neq\alpha_1$ and that $p<\infty$ if $E=F$ or $V=F$ and that $\beta<\infty$ if $G=F$ or $W=F$. Suppose also that
\begin{equation}\label{inclusions-R-interp}
G^{\alpha_0}_{\beta,\gamma_0}(\Omega)\subset E^{s_0-2l}_{p,q_0}(\Omega)
\quad\mbox{and}\quad
W^{\alpha_1}_{\beta,\gamma_1}(\Omega)\subset V^{s_1-2l}_{p,q_1}(\Omega).
\end{equation}
Put
\begin{equation}\label{s-alpha}
s:=(1-\theta)s_0+\theta s_1\quad\mbox{and}\quad
\alpha:=(1-\theta)\alpha_0+\theta\alpha_1.
\end{equation}
Then
\begin{equation*}\label{R-interp-formula}
(E^{s_0}_{p,q_0}(A,G^{\alpha_0}_{\beta,\gamma_0},\Omega),
V^{s_1}_{p,q_1}(A,W^{\alpha_1}_{\beta,\gamma_1},\Omega))_{\theta,q}
=B^{s}_{p,q}(A,B^{\alpha}_{\beta,q},\Omega)
\end{equation*}
up to equivalence of quasi-norms.
\end{theorem}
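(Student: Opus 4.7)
The plan is to deduce Theorem~\ref{R-interp} directly from Theorem~\ref{main-interp-th} applied to the functor $\mathfrak{F}=(\,\cdot\,,\,\cdot\,)_{\theta,q}$ of real interpolation between quasi-Banach spaces. Taking
$X_{0}:=E^{s_{0}}_{p,q_{0}}(\Omega)$, $X_{1}:=V^{s_{1}}_{p,q_{1}}(\Omega)$, $Y_{0}:=G^{\alpha_{0}}_{\beta,\gamma_{0}}(\Omega)$, and $Y_{1}:=W^{\alpha_{1}}_{\beta,\gamma_{1}}(\Omega)$, one checks the admissibility hypothesis immediately (each $Y_{j}$ lies in $\mathcal{U}(\Omega)$, hence contains itself), and the corresponding spaces $Z_{0}=E^{s_{0}-2l}_{p,q_{0}}(\Omega)$ and $Z_{1}=V^{s_{1}-2l}_{p,q_{1}}(\Omega)$ are dictated by the choice of $X_{j}$. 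The assumed inclusions \eqref{inclusions-R-interp} give $Y_{0}\cap Z_{0}=Y_{0}$ and $Y_{1}\cap Z_{1}=Y_{1}$ up to equivalence of quasi-norms, so Theorem~\ref{main-interp-th} reduces the problem to identifying the two separate real interpolation spaces
\begin{equation*}
(E^{s_{0}}_{p,q_{0}}(\Omega),V^{s_{1}}_{p,q_{1}}(\Omega))_{\theta,q}
\quad\mbox{and}\quad
(G^{\alpha_{0}}_{\beta,\gamma_{0}}(\Omega),W^{\alpha_{1}}_{\beta,\gamma_{1}}(\Omega))_{\theta,q}.
\end{equation*}

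Next, I would invoke the classical formulas for real interpolation between Besov and Triebel--Lizorkin spaces with common integral exponent and distinct smoothness indices. For $\mathbb{R}^{n}$ it is known (see \cite[Section~2.4.2]{Triebel83} for Banach cases and \cite[Theorem~5.3.1]{Triebel06} for the general quasi-Banach setting) that whenever $s_{0}\neq s_{1}$ and $s=(1-\theta)s_{0}+\theta s_{1}$,
\begin{equation*}
(E^{s_{0}}_{p,q_{0}}(\mathbb{R}^{n}),V^{s_{1}}_{p,q_{1}}(\mathbb{R}^{n}))_{\theta,q}
=B^{s}_{p,q}(\mathbb{R}^{n})
\end{equation*}
for any $E,V\in\{B,F\}$, with the restriction $p<\infty$ whenever the $F$-scale occurs; the right-hand side depends on neither $q_{0}$ nor $q_{1}$. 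Transferring this to the bounded Lipschitz domain $\Omega$ is routine by means of the universal extension operator of Rychkov \cite[Theorem~2.2]{Rychkov99} together with the restriction operator $K:w\mapsto w\!\upharpoonright\!\Omega$: both act continuously on every space in $\mathcal{U}$, so their interpolated versions identify the domain interpolation space with the restriction of the $\mathbb{R}^{n}$ interpolation space. Applying this to both pairs (with exponent $p$ in the first, and $\beta$ in the second, noting that $\alpha_{0}\neq\alpha_{1}$ is the analogue of the $s_{0}\neq s_{1}$ hypothesis), I obtain
\begin{equation*}
(E^{s_{0}}_{p,q_{0}}(\Omega),V^{s_{1}}_{p,q_{1}}(\Omega))_{\theta,q}=B^{s}_{p,q}(\Omega)
\quad\mbox{and}\quad
(G^{\alpha_{0}}_{\beta,\gamma_{0}}(\Omega),W^{\alpha_{1}}_{\beta,\gamma_{1}}(\Omega))_{\theta,q}=B^{\alpha}_{\beta,q}(\Omega).
\end{equation*}

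Combining these two identifications with the output of Theorem~\ref{main-interp-th} yields the desired equality
\begin{equation*}
(E^{s_{0}}_{p,q_{0}}(A,G^{\alpha_{0}}_{\beta,\gamma_{0}},\Omega),
V^{s_{1}}_{p,q_{1}}(A,W^{\alpha_{1}}_{\beta,\gamma_{1}},\Omega))_{\theta,q}
=B^{s}_{p,q}(A,B^{\alpha}_{\beta,q},\Omega)
\end{equation*}
up to equivalence of quasi-norms, which completes the proof.

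The main obstacle I anticipate is not the application of Theorem~\ref{main-interp-th}, which is essentially automatic once \eqref{inclusions-R-interp} is used to absorb the $Z_{j}$ into $Y_{j}$, but rather the verification that the real interpolation formula for Besov/Triebel--Lizorkin spaces on bounded Lipschitz domains genuinely holds for all four combinations $(E,V)\in\{B,F\}^{2}$, with independently chosen $q_{0}$ and $q_{1}$, and in the full quasi-Banach range $0<p,\beta,q_{0},q_{1}\leq\infty$. This is the step where one must carefully cite the existing literature (Triebel and Rychkov) rather than reprove anything, and where the hypotheses $p<\infty$ when $E=F$ or $V=F$, and $\beta<\infty$ when $G=F$ or $W=F$, become essential to avoid the excluded $F^{s}_{\infty,q}$-scale.
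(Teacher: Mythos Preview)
Your proposal is correct and follows essentially the same route as the paper: apply Theorem~\ref{main-interp-th} with the real interpolation functor, use the inclusions \eqref{inclusions-R-interp} to replace $Y_{j}\cap Z_{j}$ by $Y_{j}$, and then invoke the standard real interpolation formulas for $B$- and $F$-spaces with common integral exponent and distinct smoothness indices. The paper cites \cite[Corollary~1.111(i)]{Triebel06} directly on $\Omega$ and handles the mixed $B/F$ cases via the sandwich embedding $B^{s}_{p,\min\{p,q\}}\hookrightarrow F^{s}_{p,q}\hookrightarrow B^{s}_{p,\max\{p,q\}}$, whereas you work on $\mathbb{R}^{n}$ and transfer via Rychkov's extension---a cosmetic difference only.
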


\begin{proof}
This result follows directly from Theorem~\ref{main-interp-th} and the known theorems about the real interpolation between Besov spaces and between Triebel--Lizorkin spaces with the same integral-exponent (see \cite[Corollary~1.111(i)]{Triebel06}). This interpolation gives only Besov spaces. Note that the real interpolation between Triebel--Lizorkin spaces or between Besov space and Triebel--Lizorkin space with the same integral-exponent are reduced to the interpolation between only Besov spaces because
\begin{equation*}
B^{s}_{p,\min\{p,q\}}(\Omega)\hookrightarrow F^{s}_{p,q}(\Omega)\hookrightarrow B^{s}_{p,\max\{p,q\}}(\Omega)
\end{equation*}
due to \cite[Proposition~2.3.2/2(iii)]{Triebel83}.
\end{proof}

Note that the inclusions \eqref{inclusions-R-interp} imply the continuous embeddings of the quasi-Banach spaces involved because these spaces are continuously embedded in $\mathcal{D}'(\Omega)$. There are sufficient conditions for \eqref{inclusions-R-interp}. For example, the first inclusion follows from
\begin{equation}\label{cond-embedd}
\alpha_0-s_0+2l>\max\biggl\{0,\frac{n}{\beta}-\frac{n}{p}\biggr\}
\end{equation}
\cite[Section~2.5.1]{EdmundsTriebel96}. If $E=G$ and $\gamma_0\leq q_0$, we may replace $>$ with $\geq$ in \eqref{cond-embedd} (see also \cite[Theorem~2.85 (ii)]{Triebel20}).

The following result deals with the case where $\alpha_0=\alpha_1=:\lambda$ or $s_0=s_1=:t$.

\begin{theorem}\label{R-interp-special}
Suppose that the hypotheses formulated in the first two sentences of Theorem~$\ref{R-interp}$ are satisfied. Suppose also that $\lambda,t\in\mathbb{R}$, $\gamma,r\in(0,\infty]$,
\begin{equation*}
G^{\lambda}_{\beta,\gamma}(\Omega)\subset
E^{s_0-2l}_{p,q_0}(\Omega)\cap V^{s_1-2l}_{p,q_1}(\Omega),
\end{equation*}
and
\begin{equation*}
G^{\alpha_0}_{\beta,\gamma_0}(\Omega)\cup W^{\alpha_1}_{\beta,\gamma_1}(\Omega)\subset E^{t-2l}_{p,r}(\Omega).
\end{equation*}
Define numbers $s$ and $\alpha$ by formulas \eqref{s-alpha}. Then
\begin{equation}\label{R-interp-special-formula}
(E^{s_0}_{p,q_0}(A,G^{\lambda}_{\beta,\gamma},\Omega),
V^{s_1}_{p,q_1}(A,G^{\lambda}_{\beta,\gamma},\Omega))_{\theta,q}
=B^{s}_{p,q}(A,G^{\lambda}_{\beta,\gamma},\Omega)
\end{equation}
and
\begin{equation*}
(E^{t}_{p,r}(A,G^{\alpha_0}_{\beta,\gamma_0},\Omega),
E^{t}_{p,r}(A,W^{\alpha_1}_{\beta,\gamma_1},\Omega))_{\theta,q}
=E^{t}_{p,r}(A,B^{\alpha}_{\beta,q},\Omega)
\end{equation*}
up to equivalence of quasi-norms.
\end{theorem}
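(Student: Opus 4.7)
The proof strategy is to specialize Theorem~\ref{main-interp-th} to the real interpolation functor $\mathfrak{F}=(\cdot,\cdot)_{\theta,q}$ in the two natural ways, and then combine it with the standard real interpolation identities for Besov and Triebel--Lizorkin spaces already used in the proof of Theorem~\ref{R-interp}. In both cases the hypotheses give that the $Y$-spaces are continuously embedded in the corresponding $Z$-spaces from Theorem~\ref{main-interp-th}, so that the intersections $Y_j\cap Z_j$ simplify to $Y_j$.

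For the first identity I would set $X_0:=E^{s_0}_{p,q_0}(\Omega)$, $X_1:=V^{s_1}_{p,q_1}(\Omega)$ and $Y_0=Y_1:=G^{\lambda}_{\beta,\gamma}(\Omega)$, so $Z_0=E^{s_0-2l}_{p,q_0}(\Omega)$ and $Z_1=V^{s_1-2l}_{p,q_1}(\Omega)$. Admissibility of the $Y_j$ is immediate since $Y_j\in\mathcal{U}(\Omega)$, and the hypothesis $G^{\lambda}_{\beta,\gamma}(\Omega)\subset Z_0\cap Z_1$ yields $Y_j\cap Z_j=Y_j$. Theorem~\ref{main-interp-th} then reduces the claim to evaluating the two factors separately. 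The $X$-factor is $(E^{s_0}_{p,q_0}(\Omega),V^{s_1}_{p,q_1}(\Omega))_{\theta,q}=B^{s}_{p,q}(\Omega)$, which is exactly the computation carried out in the proof of Theorem~\ref{R-interp} (the Besov/Triebel--Lizorkin sandwich from \cite[Proposition~2.3.2/2(iii)]{Triebel83} reduces all four combinations $E,V\in\{B,F\}$ to real interpolation between Besov spaces with common integral-exponent $p$, for which $s_0\neq s_1$ is used). The $Y$-factor is the diagonal pair $(G^{\lambda}_{\beta,\gamma}(\Omega),G^{\lambda}_{\beta,\gamma}(\Omega))_{\theta,q}$, and I would verify the well-known identity $(X,X)_{\theta,q}=X$ with equivalent quasi-norms by a one-line $K$-functional computation: $K(t,u;X,X)=\min(1,t)\|u\|_{X}$, and $\bigl(\int_0^\infty(t^{-\theta}\min(1,t))^{q}\,dt/t\bigr)^{1/q}$ is finite for all $\theta\in(0,1)$ and $q\in(0,\infty]$. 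Combining gives \eqref{R-interp-special-formula}.

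For the second identity I would instead set $X_0=X_1:=E^{t}_{p,r}(\Omega)$ and $Y_0:=G^{\alpha_0}_{\beta,\gamma_0}(\Omega)$, $Y_1:=W^{\alpha_1}_{\beta,\gamma_1}(\Omega)$, so $Z_0=Z_1=E^{t-2l}_{p,r}(\Omega)$. The hypothesis $Y_0\cup Y_1\subset E^{t-2l}_{p,r}(\Omega)$ again gives $Y_j\cap Z_j=Y_j$, and admissibility is again clear. Applying Theorem~\ref{main-interp-th}, the $X$-factor becomes $(E^{t}_{p,r}(\Omega),E^{t}_{p,r}(\Omega))_{\theta,q}=E^{t}_{p,r}(\Omega)$ by the same diagonal identity, while the $Y$-factor $(G^{\alpha_0}_{\beta,\gamma_0}(\Omega),W^{\alpha_1}_{\beta,\gamma_1}(\Omega))_{\theta,q}$ equals $B^{\alpha}_{\beta,q}(\Omega)$ by the same real interpolation theorem for Besov/Triebel--Lizorkin spaces with common integral-exponent $\beta$, now with the distinctness assumption $\alpha_0\neq\alpha_1$ playing the role that $s_0\neq s_1$ played before.

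I do not expect a substantive obstacle: once Theorem~\ref{main-interp-th} is in hand, the argument is essentially bookkeeping, with the two ingredients requiring verification being (i) the diagonal identity $(X,X)_{\theta,q}=X$ for a quasi-Banach space $X$, which is standard and takes one line, and (ii) matching the mixed Besov/Triebel--Lizorkin cases on the $X$- or $Y$-side to the pure Besov real interpolation formula via the sandwich estimate already invoked in Theorem~\ref{R-interp}. The hypotheses in the first two sentences of Theorem~\ref{R-interp} (distinctness of smoothness indices and the $p<\infty$, $\beta<\infty$ restrictions for the Triebel--Lizorkin cases) are precisely what is needed so that those standard formulas apply verbatim.
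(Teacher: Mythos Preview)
Your proposal is correct and follows essentially the same approach as the paper: the authors' proof consists of the single sentence ``It is the same as the proof of Theorem~\ref{R-interp}'', i.e., apply Theorem~\ref{main-interp-th} with the real interpolation functor and invoke the standard real interpolation formulas for $B$- and $F$-spaces (together with the $B$--$F$ sandwich) exactly as you describe. Your write-up simply makes explicit the two specializations and the diagonal identity $(X,X)_{\theta,q}=X$, which the paper leaves implicit.
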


\begin{proof}
It is the same as the proof of Theorem~$\ref{R-interp}$.
\end{proof}

Consider the case where  $\mathfrak{F}[X_0,X_1]=[X_0,X_1]_{\theta}$ is the functor of the complex interpolation (between Banach spaces) with the parameter $\theta$ subject to $0<\theta<1$.

\begin{theorem}\label{C-interp}
Let $E,G\in\{B,F\}$, and let $s_j,\alpha_j\in\mathbb{R}$ and $p_j,q_j,\beta_j,\gamma_j\in[1,\infty)$ for each $j\in\{0,1\}$. Suppose that
\begin{equation}\label{inclusion-C-interp}
G^{\alpha_j}_{\beta_j,\gamma_j}(\Omega)\subset E^{s_j-2l}_{p_j,q_j}(\Omega)
\quad\mbox{whenever}\quad j\in\{0,1\}.
\end{equation}
Define the parameters $s$, $\alpha$, $p$, $q$, $\beta$, and $\gamma$ by  formulas \eqref{s-alpha} and
\begin{gather}\label{p-q}
\frac{1}{p}=\frac{1-\theta}{p_0}+\frac{\theta}{p_1},\quad
\frac{1}{q}=\frac{1-\theta}{q_0}+\frac{\theta}{q_1},\\
\frac{1}{\beta}=\frac{1-\theta}{\beta_0}+\frac{\theta}{\beta_1},\quad
\frac{1}{\gamma}=\frac{1-\theta}{\gamma_0}+\frac{\theta}{\gamma_1}.
\label{beta-gamma}
\end{gather}
Then
\begin{equation}\label{C-interp-formula}
[E^{s_0}_{p_0,q_0}(A,G^{\alpha_0}_{\beta_0,\gamma_0},\Omega),
E^{s_1}_{p_1,q_1}(A,G^{\alpha_1}_{\beta_1,\gamma_1},\Omega)]_{\theta}=
E^{s}_{p,q}(A,G^{\alpha}_{\beta,\gamma},\Omega)
\end{equation}
up to equivalence of norms.
\end{theorem}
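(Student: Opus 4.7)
The plan is to apply Theorem~\ref{main-interp-th} with the complex interpolation functor $\mathfrak{F}=[\cdot,\cdot]_{\theta}$ and then identify the two resulting complex interpolants via the classical interpolation formulas for Besov and Triebel--Lizorkin spaces on the Lipschitz domain~$\Omega$.

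First, I would set $X_j:=E^{s_j}_{p_j,q_j}(\Omega)$, $Y_j:=G^{\alpha_j}_{\beta_j,\gamma_j}(\Omega)$, and $Z_j:=E^{s_j-2l}_{p_j,q_j}(\Omega)$ for $j\in\{0,1\}$. Each $Y_j$ lies in $\mathcal{U}(\Omega)$ and is therefore admissible (take $E:=Y_j$ in the definition). The hypothesis \eqref{inclusion-C-interp} gives $Y_j\hookrightarrow Z_j$, so $Y_j\cap Z_j=Y_j$ up to equivalence of norms. Since all exponents are finite and $\geq1$, every space in sight is a Banach space, and both $[X_0,X_1]$ and $[Y_0,Y_1]$ are interpolation pairs (embedded in $\mathcal{D}'(\Omega)$). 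Theorem~\ref{main-interp-th} then yields
\[
[X_0(A,Y_0),X_1(A,Y_1)]_{\theta}=
([X_0,X_1]_{\theta})(A,[Y_0,Y_1]_{\theta})
\]
up to equivalence of norms.

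It then remains to identify the two complex interpolants by the classical formulas
\[
[B^{s_0}_{p_0,q_0}(\Omega),B^{s_1}_{p_1,q_1}(\Omega)]_{\theta}
=B^{s}_{p,q}(\Omega),\qquad
[F^{s_0}_{p_0,q_0}(\Omega),F^{s_1}_{p_1,q_1}(\Omega)]_{\theta}
=F^{s}_{p,q}(\Omega),
\]
with $s,p,q$ defined by \eqref{s-alpha} and \eqref{p-q}, and the analogous identities for the $\alpha$'s, $\beta$'s, $\gamma$'s. On $\mathbb{R}^n$ these are standard results from Triebel's monographs, requiring precisely the finiteness of $q_0,q_1$ (and of $\gamma_0,\gamma_1$). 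They transfer to the bounded Lipschitz domain $\Omega$ by the retract--coretract argument using the Rychkov extension operator together with the restriction operator, exactly as exploited in Step~3 of the proof of Theorem~\ref{main-interp-th}: since the restriction $K$ and the extension $J^{\times}$ are mutually inverse in the sense $KJ^{\times}=\mathrm{id}$ and are bounded on every $E^{s}_{p,q}$-scale, complex interpolation commutes with restriction. Applying this to both pairs yields $[X_0,X_1]_{\theta}=E^{s}_{p,q}(\Omega)$ and $[Y_0,Y_1]_{\theta}=G^{\alpha}_{\beta,\gamma}(\Omega)$, which combined with the displayed formula above gives exactly \eqref{C-interp-formula}.

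The main obstacle is not conceptual but a matter of bookkeeping: one must ensure that the classical complex interpolation identities are invoked in the correct form for \emph{simultaneously} varying smoothness and integrability exponents on $\Omega$, and that the finiteness of $p_j,q_j,\beta_j,\gamma_j$ is used where needed (it guarantees the density properties underlying the Calder\'on complex method). The Banach setting assumed in the theorem is exactly what the classical machinery demands, so no extra quasi-Banach subtleties arise here.
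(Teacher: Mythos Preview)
Your proposal is correct and follows essentially the same approach as the paper's own proof, which simply invokes Theorem~\ref{main-interp-th} together with the classical complex interpolation formulas for Besov and Triebel--Lizorkin spaces on~$\Omega$. Your write-up is in fact more detailed than the paper's one-line proof, correctly noting that hypothesis~\eqref{inclusion-C-interp} makes $Y_j\cap Z_j=Y_j$ and spelling out the retract argument for transferring the interpolation identities from $\mathbb{R}^n$ to the Lipschitz domain.
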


\begin{proof}
This result follows immediately from Theorem~\ref{main-interp-th} and the known theorems about the complex interpolation between Besov spaces and between Triebel--Lizorkin spaces (see \cite[Proposition~1.114]{Triebel06} and also \cite[Theorem~1.22 (ii)]{Triebel10}).
\end{proof}

The method of complex interpolation admits an extension to analytically convex quasi-Banach spaces, to which Besov and Triebel--Lizorkin spaces belong \cite[Section~7]{KaltonMayborodaMitrea07}. However, the above-mentioned $\pm$-method of interpolation is more suitable for our purposes because it is applicable to an arbitrary interpolation pair of quasi-Banach spaces $X_{0}$ and $X_{1}$. Following \cite[Section~6]{GustavssonPeetre77}, we let $\langle X_{0},X_{1},\theta\rangle$ denote the quasi-Banach space obtained by the $\pm$-method of interpolation applied to this pair. (The designation $\langle X_{0},X_{1}\rangle_{\theta}$, which suggests itself, stands for the other interpolation method introduced by Peetre \cite[Section~1]{Peetre71} and based on Gagliardo's work \cite{Gagliardo68}. We prefer to use the $\pm$-method because it retains the class of spaces $E^{s}_{p,\infty}$, where $E\in\{B,F\}$, unlike the functor $\langle X_{0},X_{1}\rangle_{\theta}$ \cite[Theorem~2.28]{YuanSickelYang15}.)

\begin{theorem}\label{plus-minus-interp}
Let $E,G\in\{B,F\}$, and let $s_j,\alpha_j\in\mathbb{R}$ and $p_j,q_j,\beta_j,\gamma_j\in(0,\infty]$ for each $j\in\{0,1\}$. Suppose that $p_0<\infty$ and $p_1<\infty$ if $E=F$, and suppose that $\beta_0<\infty$ and $\beta_1<\infty$ if $G=F$. Assume also that \eqref{inclusion-C-interp} holds true. Define the parameters $s$, $\alpha$, $p$, $q$, $\beta$, and $\gamma$ by  formulas \eqref{s-alpha}, \eqref{p-q}, and \eqref{beta-gamma}. Then
\begin{equation*}\label{plus-minus-interp-formula}
\langle E^{s_0}_{p_0,q_0}(A,G^{\alpha_0}_{\beta_0,\gamma_0},\Omega),
E^{s_1}_{p_1,q_1}(A,G^{\alpha_1}_{\beta_1,\gamma_1},\Omega),
\theta\rangle=
E^{s}_{p,q}(A,G^{\alpha}_{\beta,\gamma},\Omega)
\end{equation*}
up to equivalence of quasi-norms.
\end{theorem}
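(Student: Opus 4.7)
The plan is to follow the template of the proofs of Theorems \ref{R-interp} and \ref{C-interp}: combine Theorem \ref{main-interp-th} (applied to the $\pm$-interpolation functor) with the known description of the $\pm$-interpolation between Besov and Triebel--Lizorkin spaces. The first task is to verify that $\mathfrak{F}[X_0,X_1]:=\langle X_0,X_1,\theta\rangle$ is indeed an interpolation functor on the category of interpolation pairs of quasi-Banach spaces in the sense recalled in Section~\ref{sec2}; this is the content of \cite[Section~6]{GustavssonPeetre77}, where properties analogous to (a) and (b) are established.

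Next, I would set up the hypotheses of Theorem~\ref{main-interp-th}. For $j\in\{0,1\}$, put $X_j:=E^{s_j}_{p_j,q_j}(\Omega)$, $Y_j:=G^{\alpha_j}_{\beta_j,\gamma_j}(\Omega)$, and $Z_j:=E^{s_j-2l}_{p_j,q_j}(\Omega)$. By hypothesis \eqref{inclusion-C-interp}, $Y_j$ is continuously embedded in $Z_j$, so $Y_j\cap Z_j=Y_j$ up to equivalence of quasi-norms; moreover $Y_j\in\mathcal{U}(\Omega)$ and therefore is automatically admissible. Applying Theorem~\ref{main-interp-th} to these data yields
\begin{equation*}
\langle X_0(A,Y_0),X_1(A,Y_1),\theta\rangle=
(\langle X_0,X_1,\theta\rangle)(A,\langle Y_0,Y_1,\theta\rangle)
\end{equation*}
up to equivalence of quasi-norms.

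It remains to identify the two $\pm$-interpolation spaces appearing on the right. I would invoke the formula
\begin{equation*}
\langle E^{s_0}_{p_0,q_0}(\mathbb{R}^n),E^{s_1}_{p_1,q_1}(\mathbb{R}^n),\theta\rangle=E^{s}_{p,q}(\mathbb{R}^n)
\end{equation*}
with $s,p,q$ given by \eqref{s-alpha} and \eqref{p-q}, and the analogous formula for $G$, $\alpha,\beta,\gamma$; these are contained in the results of Gustavsson and Peetre \cite{GustavssonPeetre77} extended to the full Besov and Triebel--Lizorkin scales by Frazier and Jawerth \cite[p.~134]{FrazierJawerth90} (with the restrictions on $p_j$, $\beta_j$ when $E=F$ or $G=F$ that are assumed in the statement). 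These formulas, originally proved on $\mathbb{R}^n$, transfer to the bounded Lipschitz domain $\Omega$ via the standard retraction--coretraction argument based on Rychkov's universal extension operator $J$ from \cite[Theorem~2.2]{Rychkov99} used already on Step~3 of the proof of Theorem~\ref{main-interp-th}: interpolating the bounded mappings $K$ and $J^{\times}:=(J\cdot)\!\upharpoonright\!\mathbb{R}^n$ separately shows that $\Omega$-versions are obtained from the $\mathbb{R}^n$-versions by applying $K$. Substituting the resulting two identities into the displayed equation produces $E^{s}_{p,q}(A,G^{\alpha}_{\beta,\gamma},\Omega)$ on the right, as required.

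The main obstacle is pinpointing the $\pm$-interpolation formula for mixed parameters $(s_j,p_j,q_j)$ in the full quasi-Banach range; once that reference is in place, the rest is completely parallel to the proofs of Theorems~\ref{R-interp} and~\ref{C-interp}. A secondary check is that the assumptions $p_j<\infty$ (when $E=F$) and $\beta_j<\infty$ (when $G=F$) of the statement are exactly those needed so that the $F$-spaces are well defined and the relevant interpolation identity is available, in contrast with the complex method of Theorem~\ref{C-interp} where the $\pm$-method has the advantage of handling $q_j=\infty$ or $\gamma_j=\infty$ as well.
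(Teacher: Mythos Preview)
Your approach is essentially identical to the paper's: apply Theorem~\ref{main-interp-th}, then identify the two $\pm$-interpolation spaces on $\Omega$ via the known $\mathbb{R}^n$-formulas transferred by the retraction--coretraction argument based on Rychkov's extension operator. For the reference you flagged as the main obstacle, the paper cites \cite[Theorem~2.12 (i)]{YuanSickelYang15}, which covers both the $B$- and $F$-scales in the full quasi-Banach range (with \cite[p.~134]{FrazierJawerth90} noted specifically for the $F$-case).
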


\begin{proof}
This result follows from Theorem~\ref{main-interp-th} and \cite[Theorem~2.12 (i)]{YuanSickelYang15}. The latter theorem contains relevant interpolation formulas for the $\pm$-method applied to Besov spaces and Triebel--Lizorkin spaces over $\mathbb{R}^{n}$ (see also \cite[p.~134]{FrazierJawerth90} as to $F$-spaces). The counterparts of these  formulas for spaces over $\Omega$ are proved with the help of Rychkov's extension operator \cite[Theorem~2.2]{Rychkov99} as is shown in \cite[Theorem~1.110]{Triebel06}
for the real and complex methods of interpolation. Let us give the proof for the readers' convenience. Let $J:\mathcal{D}'(\Omega)\to\mathcal{S}'(\mathbb{R}^{n})$ be the above-mentioned extension operator. It sets bounded operators from  $E^{s_j}_{p_j,q_j}(\Omega)$ to $E^{s_j}_{p_j,q_j}(\mathbb{R}^{n})$ for each $j\in\{0,1\}$. Interpolating them, we obtain the bounded operator
\begin{equation}\label{exten-interp}
J:\bigl\langle E^{s_0}_{p_0,q_0}(\Omega),
E^{s_1}_{p_1,q_1}(\Omega),\theta\bigr\rangle\to
\bigl\langle E^{s_0}_{p_0,q_0}(\mathbb{R}^{n}),
E^{s_1}_{p_1,q_1}(\mathbb{R}^{n}),\theta\bigr\rangle=
E^{s}_{p,q}(\mathbb{R}^{n}).
\end{equation}
We also need the bounded operator
\begin{equation}\label{exten-E}
J:E^{s}_{p,q}(\Omega)\to E^{s}_{p,q}(\mathbb{R}^{n}).
\end{equation}
The mapping $J$ is right-inverse to the restriction operator $K:w\mapsto w\!\upharpoonright\!\Omega$, where $w\in\mathcal{S}'(\mathbb{R}^{n})$. The mapping $K$ sets bounded operators from  $E^{s_j}_{p_j,q_j}(\mathbb{R}^{n})$ to $E^{s_j}_{p_j,q_j}(\Omega)$ for each $j\in\{0,1\}$. Interpolating them, we obtain the bounded operator
\begin{equation}\label{restr-interp}
K:E^{s}_{p,q}(\mathbb{R}^{n})\to \bigl\langle E^{s_0}_{p_0,q_0}(\Omega),
E^{s_1}_{p_1,q_1}(\Omega),\theta\bigr\rangle.
\end{equation}
We also need the bounded operator
\begin{equation}\label{restr-E}
K:E^{s}_{p,q}(\mathbb{R}^{n})\to E^{s}_{p,q}(\Omega).
\end{equation}
Now \eqref{exten-interp} and \eqref{restr-E} imply that the identity operator $KJ$ set the continuous embedding
\begin{equation}
\bigl\langle E^{s_0}_{p_0,q_0}(\Omega),
E^{s_1}_{p_1,q_1}(\Omega),\theta\bigr\rangle\hookrightarrow
E^{s}_{p,q}(\Omega).
\end{equation}
The inverse holds true by \eqref{exten-E} and \eqref{restr-interp}.
\end{proof}

The following theorem gives sufficient conditions for the distribution space $X(A,Y)$ to be separable and for $C^{\infty}$-functions to be dense in this space.

\begin{theorem}\label{th-distr-sep-dense}
Let $X$ be a quasi-Banach space from the class $\mathcal{U}(\Omega)$, and let $Y$ be an admissible quasi-Banach space continuously embedded in $\mathcal{D}'(\Omega)$. We put $Z:=B^{s-2l}_{p,q}(\Omega)$ if $X:=B^{s}_{p,q}(\Omega)$ and put $Z:=F^{s-2l}_{p,q}(\Omega)$ if $X:=F^{s}_{p,q}(\Omega)$. Then:
\begin{itemize}
\item[(i)] If $X$ and $Y\cap Z$ are separable, then $X(A,Y)$ is also separable.
\item[(ii)] If the set $C^{\infty}(\overline{\Omega})$ is dense in the quasi-normed spaces $X$ and $Y\cap Z$, then $C^{\infty}(\overline{\Omega})$ is dense in the quasi-normed space $X(A,Y)$.
\end{itemize}
\end{theorem}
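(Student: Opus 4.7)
My plan is to deduce both assertions from Theorem~\ref{th-sep-dense} applied to the bounded operator $A\colon X\to Z$, by supplying right-inverse-type operators $R$ and $S$ already produced in the proof of Theorem~\ref{main-interp-th}. First I would reduce to the situation $Y\hookrightarrow Z$: since $A\colon X\to Z$ is bounded, one has $X(A,Y)=X(A,Y\cap Z)$ with equivalent quasi-norms (as in Step~1 of the proof of Theorem~\ref{main-interp-th}), so after replacing $Y$ by $Y\cap Z$ the new $Y$ is continuously embedded in $Z$ and inherits from the original $Y\cap Z$ the separability hypothesis of (i) and the density hypothesis of (ii).

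Next, following Steps~2 and~3 of the proof of Theorem~\ref{main-interp-th}, I would obtain bounded linear mappings $R\colon Z\to X$ and $S\colon Z\to Y$ satisfying $ARw=w+Sw$ for every $w\in Z$. The extra feature I will need is that $R$ maps $C^{\infty}(\overline{\Omega})$ into itself. When $\partial\Omega$ is smooth, $R$ is the composition of the inverse of the homogeneous Dirichlet problem for $A^{r}A^{r\ast}+I$ on $\Omega$ with the smooth-coefficient PDO $A^{r-1}A^{r\ast}$, so classical elliptic regularity yields the required preservation of smoothness. In the Lipschitz case I would take $R:=KR^{\times}J^{\times}$ built from the operators on $\Omega^{\times}$ used in Step~3 of the proof of Theorem~\ref{main-interp-th}, choosing the extension $J^{\times}$ so that it sends $C^{\infty}(\overline{\Omega})$ into $C^{\infty}(\overline{\Omega^{\times}})$; then $R^{\times}$ and the restriction $K$ preserve smoothness as well.

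Assertion (i) now follows at once from Theorem~\ref{th-sep-dense}(i): the separability of $X$ and of $Y$ implies that of $X(A,Y)$. To prove (ii) I would apply Theorem~\ref{th-sep-dense}(ii) with $M:=C^{\infty}(\overline{\Omega})\subset X(A,Y)$. Density of $M$ in $X(A,Y)$ in the $X$-quasi-norm is immediate from density of $C^{\infty}(\overline{\Omega})$ in $X$, and the inclusion $R(A(M))\subset M$ is clear because $A$ has $C^{\infty}$-coefficients and $R$ preserves $C^{\infty}(\overline{\Omega})$. The nontrivial remaining condition is density of $A(M)$ in $Y\cap A(X)$ in the $Y$-quasi-norm. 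Given $v=Au\in Y\cap A(X)$, I would pick $\psi_n\in C^{\infty}(\overline{\Omega})$ with $\psi_n\to u$ in $X$ and $w_n\in C^{\infty}(\overline{\Omega})$ with $w_n\to v$ in $Y$, and set $\phi_n:=\psi_n-R(A\psi_n-w_n)\in C^{\infty}(\overline{\Omega})$. Using $AR=I+S$, one computes $A\phi_n=w_n-S(A\psi_n-w_n)$; since $A\psi_n-w_n\to 0$ in $Z$ (by boundedness of $A\colon X\to Z$ and the embedding $Y\hookrightarrow Z$), the boundedness of $S\colon Z\to Y$ yields $A\phi_n\to v$ in~$Y$, as required. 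The main obstacle in the whole argument is to secure the smoothness-preservation of $R$ in the Lipschitz case, which forces a careful choice of the extension operator on Besov and Triebel--Lizorkin spaces.
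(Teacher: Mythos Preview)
Your proposal is correct, and for assertion~(ii) it is in fact simpler than the paper's argument. The paper, after setting up the same operators $R$ and $S$ in the smooth-boundary case, splits into two sub-cases according to whether $A(C^\infty(\overline{\Omega}))=C^\infty(\overline{\Omega})$; when equality fails it embeds $\Omega$ in a closed manifold $\Xi$, extends $A$ to an elliptic operator $\mathcal{A}$ on $\Xi$, and exploits the decomposition $L_2(\Xi)=\mathcal{A}(H^{2l}_2(\Xi))\oplus\ker\mathcal{A}^*$ to force the approximants into the range of $\mathcal{A}$. Your one-line construction $\phi_n:=\psi_n-R(A\psi_n-w_n)$, giving $A\phi_n=w_n-S(A\psi_n-w_n)\to v$ in $Y$, bypasses this detour entirely and works uniformly whether or not $A$ is surjective on $C^\infty(\overline{\Omega})$; that is a genuine simplification. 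For the Lipschitz case the paper does not build $R$ on $\Omega$ directly but instead transfers the smooth-boundary conclusion from $\Omega^\times$ back to $\Omega$ via the restriction $K$; however, in showing that $C^\infty(\overline{\Omega^\times})$ is dense in $Y^\times=Z^\times(K,Y)$ the paper applies Theorem~\ref{th-sep-dense}(ii) with $R:=J^\times$, and the hypothesis $R(T(M^\times))\subset M^\times$ there is exactly the inclusion $J^\times(C^\infty(\overline{\Omega}))\subset C^\infty(\overline{\Omega^\times})$ that you flag as the main obstacle. So this technical point is common to both routes and is not an extra burden of your strategy.
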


As to assertion (ii), observe that $C^{\infty}(\overline{\Omega})$ lies in $X$ and $Y\cap Z$ due to the common (i.e. for both assertions) hypotheses of this theorem. Hence, $C^{\infty}(\overline{\Omega})\subset X(A,Y)$.

\begin{proof}[Proof of Theorem $\ref{th-distr-sep-dense}$.]
\emph{Step~$1$.} Since $X(A,Y)=X(A,Y\cap Z)$ up to equivalence of quasi-norms, we suppose without loss of generality that the space $Y$ is continuously embedded in $Z$. We refer to the proof of Theorem~\ref{main-interp-th} in the case where $X_{0}=X_{1}=X$, $Y_{0}=Y_{1}=Y$, and $Z_{0}=Z_{1}=Z$.

\emph{Step~$2$.} We first consider the special case when $\partial\Omega$ is of class $C^{\infty}$. Let $R$ and $S$ be the operators introduced on Step~2 of the proof of Theorem~\ref{main-interp-th}. They satisfy the common hypotheses of Theorem~\ref{th-sep-dense} in which $T:=A$. Thus, assertion~(i) follows immediately from assertion~(i) of Theorem~\ref{th-sep-dense}.

Let us prove assertion~(ii) in the case under consideration. Suppose that the set $M:=C^{\infty}(\overline{\Omega})$ is dense in $X$ and $Y$, and use assertion~(ii) of Theorem~\ref{th-sep-dense} for $T$ standing for $A$. As to the hypotheses of this assertion, we see that $A(M)$ is dense in $Y$ if $A(M)=M$ and that
\begin{equation*}
R(A(M))=(A^{r-1}A^{r\ast}SA)(M)\subset M
\end{equation*}
because
\begin{equation*}
S(M)\subset\bigcap_{\lambda\in\mathbb{R}}B^{\lambda+4lr}_{2,2}(\Omega)=M
\end{equation*}
in view of \eqref{isom-invers}. Therefore, the required density of $M$ in the quasi-normed space $X(A,Y)$ follows directly from assertion~(ii) of Theorem~\ref{th-sep-dense} if $A(M)=M$.

However, the strict inclusion $A(M)\subsetneqq M$ is possible, which follows from the fact that certain homogeneous elliptic equations in $\mathbb{R}^{n}$, where $n\geq3$, with $C^{\infty}$-coefficients have compactly supported nontrivial solutions (see, e.g., \cite[Theorem~13.6.15]{Hermander05v2}). Assuming $A(M)\neq M$ to be valid, let us now prove that the set $A(M)$ is dense in $Y\cap A(X)$ in the quasi-norm $\|\cdot\|_{Y}$, which the second hypothesis of assertion~(ii) of Theorem~\ref{th-sep-dense} claims. We may and do consider $\Omega$ as an open subset of a closed (i.e. compact and boundaryless) Riemannian manifold $\Xi$ of class $C^{\infty}$ and dimension~$n$. We also may and do suppose that the PDO $A$ is a restriction of a certain elliptic PDO $\mathcal{A}$ on $\Xi$ with infinitely smooth coefficients and of order $2l$. Similarly to step~3 of the proof of Theorem~\ref{main-interp-th}, we put $\mathcal{X}:=B^{s}_{p,q}(\Xi)$ and $\mathcal{Z}:=B^{s-2l}_{p,q}(\Xi)$ if $X=B^{s}_{p,q}(\Omega)$ and put $\mathcal{X}:=F^{s}_{p,q}(\Xi)$ and $\mathcal{Z}:=F^{s-2l}_{p,q}(\Xi)$ if $X=F^{s}_{p,q}(\Omega)$ and introduce the quasi-Banach space $\mathcal{Y}:=\mathcal{Z}(\mathcal{K},Y)$, where $\mathcal{K}$ denotes the operator which maps every distribution $v\in\mathcal{D}'(\Xi)$ (given on $\Xi$) into its restriction $v\!\upharpoonright\!\Omega$. Recall that Besov and Triebel--Lizorkin spaces over the manifold $\Xi$ are defined with the help of local charts on $\Xi$ (see, e.g., \cite[Section~3.2.2]{Triebel83}). The mapping $\mathcal{K}$ is a bounded operator from $\mathcal{Z}$ to $Z$ and has a right-inverse bounded linear operator $\mathcal{J}:Z\to\mathcal{Z}$ such that $\mathcal{J}(M)\subset\mathcal{M}:=C^{\infty}(\Xi)$. The operator $\mathcal{J}$ is trivially built on the base of the extension operator $J$ used on step~3 of the above-mentioned proof.

Since $M$ is dense in $X$, the set $\mathcal{M}$ is dense in $\mathcal{X}$, which is trivially shown with the help of local charts on $\mathcal{X}$. This implies the density of $\mathcal{M}$ in $\mathcal{Z}$ because some elliptic operators on $\Xi$, e.g, $(1-\Delta_{\Xi})^{l}$, set isomorphisms between $\mathcal{X}$ and $\mathcal{Z}$, which follows from \cite[Theorem~6.2.2]{Triebel92}. (As usual, $\Delta_{\Xi}$ denotes the Laplace--Beltrami operator on $\Xi$.) Hence, $\mathcal{M}$ is dense in the quasi-Banach space $\mathcal{Y}=\mathcal{Z}(\mathcal{K},Y)$ according to assertion~(ii) of Theorem~\ref{th-sep-dense} in which we put $X:=\mathcal{Z}$, $T:=\mathcal{K}$, $R:=\mathcal{J}$, and $S:=0$.

To achieve our purpose, let us prove that the set $\mathcal{A}(\mathcal{M})$ is dense in $\mathcal{Y}\cap\mathcal{A}(\mathcal{X})$ in the quasi-norm $\|\cdot\|_{\mathcal{Y}}$. We choose a distribution  $f\in\mathcal{Y}\cap\mathcal{A}(\mathcal{X})$ arbitrarily and note that  $f=\mathcal{A}u$ for certain $u\in\mathcal{X}$. Since $\mathcal{M}$ is dense in $\mathcal{X}$, there exists a sequence of functions $u_k\in\mathcal{M}$ such that $u_k\to u$ in $\mathcal{X}$, as $k\to\infty$. Then $\mathcal{A}u_k\to f$ in $\mathcal{Z}$. Besides, since $\mathcal{M}$ is dense in $\mathcal{Y}$, there exists a sequence of functions $f_k\in\mathcal{M}$ such that $f_k\to f$ in $\mathcal{Y}$. As is known \cite[Theorem~2.3.8]{Agranovich94}, it follows from the ellipticity of $\mathcal{A}$ on the closed manifold $\Xi$ that the Hilbert space $L_2(\Xi)$ decomposes into the orthogonal sum
\begin{equation}\label{orth-sum}
L_2(\Xi)=\mathcal{A}(H^{2l}_{2}(\Xi))\oplus\ker\mathcal{A}^{\ast}.
\end{equation}
Here, $H^{2l}_{2}(\Xi):=F^{2l}_{2,2}(\Xi)$ is the inner product Sobolev  space of order $2l$ over $\Xi$, and $\ker\mathcal{A}^{\ast}$ is the finite-dimensional space $\{w\in\mathcal{M}:\mathcal{A}^{\ast}w=0\;\mbox{on $\Xi$}\}$, whereas   $\mathcal{A}^{\ast}$ is the formally adjoint PDO to $\mathcal{A}$ (with respect to the Riemannian metric on $\Xi$). Hence, $f_k=\mathcal{A}v_k+w_k$ for some $v_k\in H^{2l}_{2}(\Xi)$ and $w_k\in\ker\mathcal{A}^{\ast}$. Since $\mathcal{A}v_k\in\mathcal{M}$, the ellipticity of $\mathcal{A}$ implies that $v_k\in\mathcal{M}$ \cite[Theorem~2.2.6]{Agranovich94}. Note that $\mathcal{Y}\hookrightarrow\mathcal{Z}\hookrightarrow H^{\lambda}_{2}(\Xi)$ for some $\lambda<0$ and that the Sobolev space $H^{\lambda}_{2}(\Xi)$ splits into the following direct sum of its (closed) subspaces:
\begin{equation}\label{direct-sum}
H^{\lambda}_{2}(\Xi)=\mathcal{A}(H^{\lambda+2l}_{2}(\Xi))\dotplus
\ker\mathcal{A}^{\ast},
\end{equation}
this sum being an extension of \eqref{orth-sum} \cite[Theorem~2.3.12]{Agranovich94}. Since the sequence $(f_k)$ converges in $\mathcal{Y}\hookrightarrow H^{\lambda}_{2}(\Xi)$, it follows from \eqref{direct-sum} that the sequence $(w_k)$ converges to a certain function $w\in\ker\mathcal{A}^{\ast}$ in any quasi-norm, specifically in $\|\cdot\|_{\mathcal{Y}}$. Thus,
\begin{equation*}
\mathcal{A}(u_k-v_k)=\mathcal{A}u_k-f_{k}+w_{k}\to w
\quad\mbox{in}\quad \mathcal{Z}\hookrightarrow\mathcal{D}'(\Xi).
\end{equation*}
This implies in view of   \eqref{orth-sum} that
\begin{equation*}
0=(\mathcal{A}(u_k-v_k),w)_{\Xi}\to(w,w)_{\Xi},
\end{equation*}
with $(\cdot,\cdot)_{\Xi}$ denoting the inner product in $L_2(\Xi)$. Hence, $w=0$, which yields
\begin{equation*}
\mathcal{A}v_k=f_k-w_k\to f-w=f\;\;\mbox{in $\mathcal{Y}$}.
\end{equation*}
Thus, the set $\mathcal{A}(\mathcal{M})$ is dense in $\mathcal{Y}\cap\mathcal{A}(\mathcal{X})$ in the quasi-norm $\|\cdot\|_{\mathcal{Y}}$.

Hence, the set $A(M)=\mathcal{K}(\mathcal{A}(\mathcal{M}))$ is dense in $Y\cap A(X)=\mathcal{K}(\mathcal{Y}\cap\mathcal{A}(\mathcal{X}))$ in the quasi-norm $\|\cdot\|_{Y}$ because $\mathcal{K}$ is a bounded operator  from $\mathcal{Y}$ to $Y$. As to the latter equality, note that if $Ag\in Y$ for certain $g\in X$, then $Ag=(\mathcal{A}h)\!\upharpoonright\!\Omega$ for every extension $h\in\mathcal{X}$ of $g$, where $\mathcal{A}h\in\mathcal{Z}$, hence, $Ag\in\mathcal{K}(\mathcal{Y}\cap\mathcal{A}(\mathcal{X}))$. Therefore, $Y\cap A(X)\subset\mathcal{K}(\mathcal{Y}\cap\mathcal{A}(\mathcal{X}))$; the inverse inclusion is trivial. Now we see in the $A(M)\subsetneqq M$ case that all hypotheses of  assertion~(ii) of Theorem~\ref{th-sep-dense} are satisfied for $T:=A$. Thus, $M$ is dense in the quasi-normed space $X(A,Y)$ due to this assertion.

We have proved assertions (i) and (ii) in the special case when $\partial\Omega$ is of class~$C^{\infty}$.

\emph{Step~$3$.} Consider now the general case \eqref{Lipschitz-boundary} and deduce these assertions from their counterparts for the special case just treated. We use the notation introduced on Step~3 of Theorem~\ref{main-interp-th}.

Let us deduce assertion~(i). Suppose that $X$ and $Y$ are separable and recall that $X=E^{s}_{p,q}(\Omega)$ under a certain choose of $E\in\{B,F\}$ and for some $s$, $p$, and $q$ subject to \eqref{indexes-case}. It follows from the separability of $X$ that
the quasi-Banach spaces $X^{\times}=E^{s}_{p,q}(\Omega^{\times})$ and $Z^{\times}=E^{s-2l}_{p,q}(\Omega^{\times})$ are also separable. Indeed, if $E^{s}_{p,q}(\Theta_{0})$ is separable for a certain open ball $\Theta_{0}$, then $E^{s}_{p,q}(\Theta)$ is separable for every ball $\Theta$ due to the invariance of $E^{s}_{p,q}(\mathbb{R}^n)$ with respect to $C^{\infty}$-diffeomorphisms of $\mathbb{R}^n$ onto itself \cite[Theorem~4.3.2]{Triebel92}. Choosing
$\Theta_{0}\subset\Omega$ and $\Theta\supset\Omega^{\times}$, we see that the separability of $X$ implies the same property for $X^{\times}$, which yields the separability of $Z^{\times}$ because the elliptic PDO $(1-\Delta)^{l}$ is a surjective bounded operator from $X^{\times}$ onto $Z^{\times}$. Since $Z^{\times}$ and $Y$ are separable, the quasi-normed space $Y^{\times}=Z^{\times}(K,Y)$ is also separable by assertion~(i) of Theorem~\ref{th-sep-dense} in which we let
$X:=Z^{\times}$, $T:=K$, $R:=J^{\times}$, and $S:=0$. Hence, the quasi-normed space $X^{\times}(A^{\times},Y^{\times})$ is also separable according to Step~2 applicable to the domain $\Omega^{\times}$ with boundary of class $C^{\infty}$. Thus, the quasi-normed space $X(A,Y)$ is separable because $K$ is a surjective bounded operator from $X^{\times}(A^{\times},Y^{\times})$ onto $X(A,Y)$. Assertion~(i) is proved in the general case.

Let us now proved assertion~(ii) in this case. We reason similarly to the proof of assertion~(i) just given. Assume that the set $M:=C^{\infty}(\overline{\Omega})$ is dense in $X$ and $Y$. Then the set $M^{\times}:=C^{\infty}(\Omega^{\times}\cup\partial\Omega^{\times})$ is dense in $X^{\times}$ and~$Z^{\times}$. Therefore, $M^{\times}$ is also dense in the quasi-normed space $Y^{\times}=Z^{\times}(K,Y)$ due to assertion~(ii) of Theorem~\ref{th-sep-dense} with the above letting. Hence, $M^{\times}$ is dense in $X^{\times}(A^{\times},Y^{\times})$ according to Step~2. Thus, $M$ is dense in $X(A,Y)$. Assertion~(ii) is proved in the general case.
\end{proof}

Applying Theorem~\ref{th-distr-sep-dense} to the $Y\in\mathcal{U}(\Omega)$ case, we deduce the following result:

\begin{theorem}\label{B-F-sep-dense}
Let $E,G\in\{B,F\}$ and $s,\alpha\in\mathbb{R}$ and $p,q,\beta,\gamma\in(0,\infty)$. Then the quasi-Banach space $E^{s}_{p,q}(A,G^{\alpha}_{\beta,\gamma},\Omega)$ is separable, and the set $C^{\infty}(\overline{\Omega})$ is dense in this space.
\end{theorem}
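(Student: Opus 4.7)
The plan is a direct application of Theorem~\ref{th-distr-sep-dense}. Set $X:=E^{s}_{p,q}(\Omega)$ and $Y:=G^{\alpha}_{\beta,\gamma}(\Omega)$. Then $X\in\mathcal{U}(\Omega)$, and $Y$ is admissible because $Y\in\mathcal{U}(\Omega)$ itself (take $E:=Y$ in the admissibility definition). Let $Z:=B^{s-2l}_{p,q}(\Omega)$ if $E=B$ and $Z:=F^{s-2l}_{p,q}(\Omega)$ if $E=F$, in accordance with the notation of Theorem~\ref{th-distr-sep-dense}. Since all integral- and sum-exponents of both $Y$ and $Z$ lie in $(0,\infty)$, it remains to verify that (a) $X$ and $Y\cap Z$ are separable and (b) $C^{\infty}(\overline{\Omega})$ is dense in $X$ and in $Y\cap Z$ in their respective quasi-norms; Theorem~\ref{th-distr-sep-dense} will then yield both conclusions at once.

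For $X$ alone, and for each of $Y$ and $Z$ separately, separability together with the density of $C^{\infty}(\overline{\Omega})$ is standard for Besov and Triebel--Lizorkin spaces on a bounded Lipschitz domain when all parameters are finite; one transfers the corresponding $\mathbb{R}^{n}$-results via Rychkov's extension operator $J$ \cite[Theorem~2.2]{Rychkov99} and the restriction to $\Omega$. The separability of the intersection $Y\cap Z$ then follows from the quasi-isometric embedding $u\mapsto(u,u)$ of $Y\cap Z$ into the separable product quasi-Banach space $Y\times Z$.

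The only substantive point is the joint density of $C^{\infty}(\overline{\Omega})$ in $Y\cap Z$ in the max quasi-norm, since separate density in $Y$ and in $Z$ does not automatically furnish a common approximating sequence. I plan to obtain such a sequence by a single approximation on $\mathbb{R}^{n}$. Given $u\in Y\cap Z$, apply Rychkov's extension to get $v:=Ju\in Y(\mathbb{R}^{n})\cap Z(\mathbb{R}^{n})$; this is possible because $J$ is simultaneously bounded into every member of $\mathcal{U}(\mathbb{R}^{n})$. Fix a standard mollifier $(\phi_{\varepsilon})_{\varepsilon>0}$ and a cutoff family $\chi_{R}\in C^{\infty}_{c}(\mathbb{R}^{n})$ with $\chi_{R}=1$ on the ball of radius $R$, and set $v_{\varepsilon,R}:=\chi_{R}\cdot(\phi_{\varepsilon}*v)\in C^{\infty}_{c}(\mathbb{R}^{n})$. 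Using the standard continuity of mollification and of multiplication by a fixed $C^{\infty}$-cutoff on Besov and Triebel--Lizorkin spaces with all finite exponents, together with the fact that compactly supported $C^{\infty}$-functions are dense in such spaces on $\mathbb{R}^{n}$, one obtains $v_{\varepsilon,R}\to v$ in both $Y(\mathbb{R}^{n})$ and $Z(\mathbb{R}^{n})$ as $\varepsilon\to 0$ and $R\to\infty$. Choosing $\varepsilon_{k}\to 0$ and $R_{k}\to\infty$ suitably and restricting to $\overline{\Omega}$ produces a sequence in $C^{\infty}(\overline{\Omega})$ converging to $u$ simultaneously in $Y$ and in $Z$, hence in $Y\cap Z$. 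With conditions (a) and (b) verified, Theorem~\ref{th-distr-sep-dense} gives both the separability of $E^{s}_{p,q}(A,G^{\alpha}_{\beta,\gamma},\Omega)$ and the density of $C^{\infty}(\overline{\Omega})$ in it.
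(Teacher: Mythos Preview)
Your proposal is correct and follows essentially the same route as the paper: reduce to Theorem~\ref{th-distr-sep-dense} and verify that $C^{\infty}(\overline{\Omega})$ is dense in $X$ and in $Y\cap Z$ (with separability following alongside). The only difference is in how the \emph{joint} density in $Y\cap Z$ is justified: you build a common approximating sequence via Rychkov extension followed by mollification and cutoff, whereas the paper cites Step~5 of \cite[Proof of Theorem~2.3.3]{Triebel83}, which already produces, for a given $f$, an approximating sequence in $\mathcal{S}(\mathbb{R}^{n})$ that is independent of the parameters $E\in\{B,F\}$, $s$, $p$, $q$ and hence works simultaneously in both $Y(\mathbb{R}^{n})$ and $Z(\mathbb{R}^{n})$. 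For separability the paper instead observes that a suitable Sobolev space $H^{\lambda}_{r}(\Omega)$ embeds densely into $X$ and into $Y\cap Z$; your product-embedding argument for $Y\cap Z$ is an equally clean alternative.

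One small caution: the claim that $\phi_{\varepsilon}*v\to v$ in $E^{s}_{p,q}(\mathbb{R}^{n})$ is indeed true for all finite $p,q$, but in the quasi-Banach range $0<p<1$ or $0<q<1$ this is not quite as ``off-the-shelf'' as for $p,q\ge 1$; it is most cleanly seen by noting that the partial Littlewood--Paley sums of $v$ converge in every such space and are entire functions of exponential type, after which truncation handles the rest. Citing Triebel's explicit construction, as the paper does, sidesteps having to spell this out.
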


\begin{proof}
As is known \cite[Theorem~2.3.3]{Triebel83}, the set $\mathcal{S}(\mathbb{R}^{n})$ is dense in $E^{s}_{p,q}(\mathbb{R}^{n})$ because $p<\infty$ and $q<\infty$. Here, as usual, $\mathcal{S}(\mathbb{R}^{n})$ is the linear topological space of all rapidly decreasing $C^{\infty}$-functions on $\mathbb{R}^{n}$. Moreover, Step~5 of \cite[Proof of Theorem~2.3.3]{Triebel83} provides an arbitrary distribution $f\in E^{s}_{p,q}(\mathbb{R}^{n})$ with an approximating (in $E^{s}_{p,q}(\mathbb{R}^{n})$) sequence
$(f_k)\subset\mathcal{S}(\mathbb{R}^{n})$ that does not depend on  $E\in\{B,F\}$, $s\in\mathbb{R}$, and $p,q\in[1,\infty)$. Hence, the set $C^{\infty}(\overline{\Omega})$ is dense in the quasi-Banach spaces $E^{s}_{p,q}(\Omega)$ and $G^{\alpha}_{\beta,\gamma}(\Omega)\cap E^{s-2l}_{p,q}(\Omega)$. This implies that the continuous embeddings of
the separable Sobolev space $H^{\lambda}_{r}(\Omega)=F^{\lambda}_{r,2}(\Omega)$ in these spaces are dense; here $\lambda>\max\{s,\alpha\}$ and $r>\max\{1,p,\beta\}$. Thus, these spaces are also separable. Now the conclusion of Theorem~\ref{B-F-sep-dense} follows from Theorem~\ref{th-distr-sep-dense}.
\end{proof}

\section{Applications to elliptic problems with rough boundary data}\label{sec4}

Let $\Omega$ be a bounded open domain in $\mathbb{R}^{n}$ with boundary $\Gamma:=\partial\Omega$ of class $C^{\infty}$, with $n\geq2$ (the definition of such a domain is given, e.g., in \cite[Definition~3.2.1/2]{Triebel95}). We consider an elliptic boundary-value problem of the form
\begin{gather}\label{el-PDE}
Au=f\quad\mbox{in}\quad\Omega,\\
B_{j}u=g_{j}\quad\mbox{on}\quad\Gamma,
\quad j=1,\ldots,l. \label{bound-cond}
\end{gather}
Here, as above, $A$ is an elliptic linear PDO on $\overline{\Omega}=\Omega\cup\Gamma$ of an even order $2l\geq2$, and each $B_{j}$ is a linear boundary PDO on $\Gamma$ of an arbitrary order $m_{j}\geq0$. All coefficients of $A$ and  $B_{j}$ are supposed to be complex-valued infinitely smooth functions on $\overline{\Omega}$ and $\Gamma$, resp. Put $B:=(B_{1},\ldots,B_{l})$, $g:=(g_{1},\ldots,g_{l})$, and $m:=\max\{m_{1},\ldots,m_{l}\}$, with the case $m\geq2l$ being possible. The ellipticity of this problem means that $A$ is properly elliptic in $\overline{\Omega}$ and that the collection $B$ of boundary operators satisfy the Lopatinskii condition with respect to $A$ on $\Gamma$ (see, e.g., \cite[Section~1.2]{Agranovich97}). Recall that the elipticity of $A$ implies its proper ellipticity in the $n\geq3$ case.

It is known that the mapping $u\mapsto(Au,Bu)$ sets the Fredholm bounded linear operators
\begin{equation}\label{Fredholm-B-positive}
(A,B):B^{s}_{p,q}(\Omega)\to B^{s-2l}_{p,q}(\Omega)\times
\prod_{j=1}^{l}B^{s-m_j-1/p}_{p,q}(\Gamma)
\end{equation}
and
\begin{equation}\label{Fredholm-F-positive}
(A,B):F^{s}_{p,q}(\Omega)\to F^{s-2l}_{p,q}(\Omega)\times
\prod_{j=1}^{l}B^{s-m_j-1/p}_{p,p}(\Gamma)
\end{equation}
whenever $s$, $p$, and $q$ satisfy \eqref{indexes-case} and $s>m+\pi(p,n)$, with $\pi(p,n)$ being defined by \eqref{trace-th-parameter} and $Bu$ being understood in the sense of the trace theorem \cite[Theorem~3.3.3]{Triebel83}. These operators has a common kernel $N\subset C^{\infty}(\overline{\Omega})$ and index $\varkappa$, which do not depend on $s$, $p$, and $q$. Moreover, there exists a finite-dimensional space $M\subset C^{\infty}(\overline{\Omega})\times(C^{\infty}(\Gamma))^{l}$ that do not depend on $s$, $p$, and $q$ and that the target spaces of the operators \eqref{Fredholm-B-positive} and
\eqref{Fredholm-F-positive} split into the following direct sums:
\begin{equation}\label{direct-sum-positive-B}
B^{s-2l}_{p,q}(\Omega)\times\prod_{j=1}^{l}B^{s-m_j-1/p}_{p,q}(\Gamma)=
M\dotplus(A,B)(B^{s}_{p,q}(\Omega))
\end{equation}
and
\begin{equation}\label{direct-sum-positive-F}
F^{s-2l}_{p,q}(\Omega)\times\prod_{j=1}^{l}B^{s-m_j-1/p}_{p,p}(\Gamma)=
M\dotplus(A,B)(F^{s}_{p,q}(\Omega)).
\end{equation}
This result is contained in \cite[Theorem 5.2]{Johnsen96}. (It was also proved in \cite[Theorem~15]{FrankeRunst95} in the case where the elliptic problem is regular. A more specific case, when the regular elliptic problem sets isomorphisms \eqref{Fredholm-B-positive} and \eqref{Fredholm-F-positive}, was considered earlier in \cite[Theorem~4.3.3]{Triebel83} under additional restrictions on $s$, $p$, and $q$.)

Of course, $B^{\sigma}_{p,q}(\Gamma)$, with $\sigma\in\mathbb{R}$, denotes the Besov space over $\Gamma$ defined on the base of $B^{\sigma}_{p,q}(\mathbb{R}^{n-1})$ by the localization of $\Gamma$ (see, e.g., \cite[Definition~3.2.2/2(i)]{Triebel83}). The space $B^{\sigma}_{p,q}(\Gamma)$ is quasi-Banach one (is Banach one in the  $p,q\geq1$ case) and does not depend up to equivalence of quasi-norms on the localization procedure \cite[Proposition~3.2.3(ii)]{Triebel83}. (The same applies to the Triebel--Lizorkin space $F^{\sigma}_{p,q}(\Gamma)$.)

Note that the operators \eqref{Fredholm-B-positive} and \eqref{Fredholm-F-positive} coincide if $p=q$. The spaces on $\Gamma$ in  \eqref{Fredholm-F-positive} do not depend on~$q$. Since $B^{\sigma}_{p,p}(\Gamma)=F^{\sigma}_{p,p}(\Gamma)$, the operator \eqref{Fredholm-F-positive} can be considered as a mapping between Triebel--Lizorkin spaces whatever~$q$.

As to the notion of a Fredholm mapping for a bounded linear operator on a pair of quasi-Banach spaces, we exploit the same  definition as that used in the case of Banach spaces (see, e.g., \cite[Definition~19.1.2]{Hermander07v2}) and apply it to dual reach quasi-Banach spaces, as is done in \cite[Section~6.1]{FrankeRunst95}. Following \cite[Definition~2.5]{GeislerRunst91}, we say that a quasi-Banach space $X$ is dual reach if for every nonzero vector $u\in X$ there exists a continuous linear functional $\omega$ on $X$ subject to $\omega(u)=1$. Briefly saying, $X$ is dual reach if and only if it has a  separating dual.  Thus, a bounded linear operator $T:X\to Y$ acting between dual reach quasi-Banach spaces $X$ and $Y$ is called a Fredholm one if its kernel  $\ker T$ and co-kernel $\mathrm{coker}\,T:=Y/T(X)$ are finite-dimensional. If $T$ is a Fredholm operator, then its range $T(X)$ is closed in $Y$ and its index $\mathrm{ind}\,T:=\dim\ker T-\dim\mathrm{coker}\,T$ is well defined and finite. The closedness of $T(X)$ is proved in the same way as that used for Banach spaces (see, e.g., \cite[Lemma~19.1.1]{Hermander07v2}); the proof is based on Open Mapping Theorem, which applies to quasi-Banach spaces \cite[Section~2]{Kalton03}.

It is clear that every quasi-Banach space continuously embedded in a certain normed space is dual reach. Hence, the sum of such quasi-Banach spaces is also dual reach provided that they form an interpolation pair (we will use this property when applying Lemma~\ref{prop-interp-Fredholm}). All quasi-Banach spaces being used in this section will obey such embeddings. Thus, all the source and target spaces in \eqref{Fredholm-B-positive} and \eqref{Fredholm-F-positive} are dual reach because of the following continuous embeddings \cite[Theorem~3.3.1(i),(ii) and Remark 3.3.1/3]{Triebel83}:
\begin{equation}\label{embedding-Omega}
E^{s_1}_{p_1,q_1}(\Omega)\hookrightarrow G^{s_0}_{p_0,q_0}(\Omega)
\quad\mbox{whenever}\quad s_1-s_0>\max\biggl\{0,n\biggl(\frac{1}{p_1}-\frac{1}{p_0}\biggr)\biggr\}
\end{equation}
and
\begin{equation}\label{embedding-Gamma}
E^{s_1}_{p_1,q_1}(\Gamma)\hookrightarrow G^{s_0}_{p_0,q_0}(\Gamma)
\quad\mbox{whenever}\quad s_1-s_0>
\max\biggl\{0,(n-1)\biggl(\frac{1}{p_1}-\frac{1}{p_0}\biggr)\biggr\};
\end{equation}
here, $E,G\in\{B,F\}$, $s_0,s_1\in\mathbb{R}$, $0<p_0,p_1,q_0,q_1\leq\infty$, with $p_0\neq\infty$ in the $G=F$ case
and with $p_1\neq\infty$ in the $E=F$ case. Of course, whatever $s_1$, $p_1$, and $q_1$ we can always choose $s_{0}$
so that these embeddings hold true for $p_1=\max\{1,p_0\}$ and $q_1=\max\{1,q_0\}$, i.e. for some normed spaces on the right of the embeddings. Besides, the quasi-Banach distribution spaces over $\Omega$ or $\Gamma$ used in this paper are dual reach because they are continuously embedded in the linear topological spaces of all distributions on $\Omega$ or $\Gamma$ (see \cite[Proof of Proposition~3.1]{GeislerRunst91}).

\begin{remark}\label{rem-complement-space}
Assume $M$ to be a subspace of $C^{\infty}(\overline{\Omega})\times(C^{\infty}(\Gamma))^{l}$ such that either \eqref{direct-sum-positive-B} or \eqref{direct-sum-positive-F} holds true for certain $s$, $p$, and $q$ subject to conditions \eqref{indexes-case}  and $s>m+\pi(p,n)$. Then $M$ is finite-dimensional ($\dim M=-\varkappa+\dim N$), and both \eqref{direct-sum-positive-B} and \eqref{direct-sum-positive-F} are valid for all $s$, $p$, and $q$ just indicated \cite[Theorem 5.2 with the reference to assertion 3$^{\circ}$ of Theorem~1.3]{Johnsen96}. If $m\leq2q-1$, then the kernel of a certain adjoint boundary-value problem to problem \eqref{el-PDE}, \eqref{bound-cond} can be taken as such a space $M$. We will discuss this in Appendix~A.
\end{remark}

\begin{remark}\label{adjoint-kernel}
The kernels of the adjoint operators to the Fredholm bounded operators \eqref{Fredholm-B-positive} and \eqref{Fredholm-F-positive} are equal to a certain space $W$ which do not depend on $s$, $p$, and $q$, with $\dim W=-\varkappa+\dim N$. Indeed, choosing any two operators among \eqref{Fredholm-B-positive} and \eqref{Fredholm-F-positive}, we note that they are extensions of a certain Fredholm bounded operator \eqref{Fredholm-B-positive} with $s\gg1$. This follows from the continuous embedding \eqref{embedding-Omega}, which is dense if $p_{0}$ and $q_{0}$ are finite. Hence, the adjoint of the last operator is an extension of the adjoint of any choosing operator if these choosing operators act between spaces with finite subscripts. In this case, the kernels of the choosing operators are equal to a certain space $W$ because the kernels of these three operators are of the same dimension $-\varkappa+\dim N$. Moreover, since any choosing operator among \eqref{Fredholm-B-positive} and \eqref{Fredholm-F-positive} is a restriction of a certain operator \eqref{Fredholm-B-positive} acting between spaces with finite subscripts, the kernel of the adjoint of the choosing operator coincides with $W$ even in the case where $p=\infty$ and/or $q=\infty$. Note that a vector $(f,g)$ belongs to the range of the Fredholm operator \eqref{Fredholm-B-positive} (resp.,  \eqref{Fredholm-F-positive}) if and only if $(f,g)$ belongs to the target space of \eqref{Fredholm-B-positive} (resp., \eqref{Fredholm-F-positive}) and satisfies $\omega(f,g)=0$ for every functional $\omega\in W$ (see Proposition~\ref{prop-range} given in Appendix~A). In Appendix~A, we will discuss a representation of $W$ in terms $C^{\infty}$-functions belonging to the kernel of a certain adjoint boundary-value problem to problem \eqref{el-PDE}, \eqref{bound-cond}.
\end{remark}

The operators \eqref{Fredholm-B-positive} and \eqref{Fredholm-F-positive} can not be well-defined in the case where $s<m+\pi(p,n)$
\cite[Remark 2.7.2/4]{Triebel83}. Let us show that in this case we may take the spaces $B^{s}_{p,q}(A,B^{\alpha}_{p,q},\Omega)$ and $F^{s}_{p,q}(A,F^{\alpha}_{p,q},\Omega)$ with $\alpha>m-2l+\pi(p,n)$ as domains of the operator $(A,B)$ to obtain its boundedness and Fredholm property on relevant distribution spaces at least if $p$ and $q$ are finite.

\begin{theorem}\label{th-Fredholm}
Let $p,q\in(0,\infty)$ and
\begin{equation}\label{Fredholm-parameters}
s\leq m+\pi(p,n)\quad\mbox{and}\quad \alpha>m-2l+\pi(p,n).
\end{equation}
Then the mapping $u\mapsto(Au,Bu)$, with $u\in C^{\infty}(\overline{\Omega})$, extends uniquely (by continuity) to  bounded linear operators
\begin{equation}\label{Fredholm-B}
(A,B):B^{s}_{p,q}(A,B^{\alpha}_{p,q},\Omega)\to B^{\alpha}_{p,q}(\Omega)\times
\prod_{j=1}^{l}B^{s-m_j-1/p}_{p,q}(\Gamma)
\end{equation}
and
\begin{equation}\label{Fredholm-F}
(A,B):F^{s}_{p,q}(A,F^{\alpha}_{p,q},\Omega)\to F^{\alpha}_{p,q}(\Omega)\times
\prod_{j=1}^{l}B^{s-m_j-1/p}_{p,p}(\Gamma).
\end{equation}
They are Fredholm operators with the kernel $N$ and index $\varkappa$. Moreover, the target spaces of these operators split into the direct sums
\begin{equation}\label{direct-sum-negative-B}
B^{\alpha}_{p,q}(\Omega)\times\prod_{j=1}^{l}B^{s-m_j-1/p}_{p,q}(\Gamma)=
M\dotplus(A,B)(B^{s}_{p,q}(A,B^{\alpha}_{p,q},\Omega))
\end{equation}
and
\begin{equation}\label{direct-sum-negative-F}
F^{\alpha}_{p,q}(\Omega)\times\prod_{j=1}^{l}B^{s-m_j-1/p}_{p,p}(\Gamma)=
M\dotplus(A,B)(F^{s}_{p,q}(A,F^{\alpha}_{p,q},\Omega))
\end{equation}
for an arbitrary space $M$ satisfying the assumption made in Remark~$\ref{rem-complement-space}$.
\end{theorem}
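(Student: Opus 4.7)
The plan is to extend $(A,B)$ by continuity from the dense subspace $C^{\infty}(\overline{\Omega})\subset E^{s}_{p,q}(A,E^{\alpha}_{p,q},\Omega)$ (density being provided by Theorem~\ref{B-F-sep-dense}, where $E\in\{B,F\}$), and then to transfer the Fredholm structure from the classical result \eqref{Fredholm-B-positive}--\eqref{direct-sum-positive-F} via the auxiliary Dirichlet machinery built in Step~2 of the proof of Theorem~\ref{main-interp-th}. Density reduces the boundedness of \eqref{Fredholm-B} and \eqref{Fredholm-F} to the a~priori estimate
\begin{equation*}
\sum_{j=1}^{l}\|B_{j}u\|_{B^{s-m_{j}-1/p}_{p,q}(\Gamma)}
\leq c\bigl(\|u\|_{E^{s}_{p,q}(\Omega)}+\|Au\|_{E^{\alpha}_{p,q}(\Omega)}\bigr)
\quad(u\in C^{\infty}(\overline{\Omega})).
\end{equation*}

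To prove the estimate, I would pick $r\geq 1$ large enough that simultaneously $s-2l+4lr>m+\pi(p,n)$, $s-2l>-2lr-1+\pi(p,n)$, and $E^{s-2l+4lr}_{p,q}(\Omega)\hookrightarrow E^{\alpha}_{p,q}(\Omega)$; these three are compatible because $\alpha>m-2l+\pi(p,n)$. The isomorphism \eqref{isom-invers} then supplies $S:=-(A^{r}A^{r\ast}+I)^{-1}$ and $R:=A^{r-1}A^{r\ast}S$ with $AR=I+S$, $R\colon E^{s-2l}_{p,q}(\Omega)\to E^{s}_{p,q}(\Omega)$ bounded, and $S$ landing in the Dirichlet-vanishing subspace $E^{s-2l+4lr}_{p,q}(\Omega,2lr)$, exactly as on Step~2 of the proof of Theorem~\ref{main-interp-th}. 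Splitting $u=R(Au)+v$ with $v:=u-R(Au)$: since $Au\in E^{\alpha}_{p,q}(\Omega)$, elliptic regularity for \eqref{isom-invers} upgrades this to $S(Au)\in E^{\alpha+4lr}_{p,q}(\Omega,2lr)$ and hence $R(Au)\in E^{\alpha+2l}_{p,q}(\Omega)$, a space above the trace threshold (since $\alpha+2l>m+\pi(p,n)$), so the classical trace theorem bounds $\|B(R(Au))\|$ by $\|Au\|_{E^{\alpha}_{p,q}(\Omega)}$. The second summand $v\in E^{s}_{p,q}(\Omega)$ satisfies $Av=-S(Au)\in E^{\alpha+4lr}_{p,q}(\Omega,2lr)$, whose elements are very smooth and vanish on $\Gamma$ together with many normal derivatives; the required bound on $\|Bv\|$ then follows from a Lions--Magenes-type boundary estimate for $v$ of low regularity with smooth image under $A$, controllable by the graph norm of $v$ via the classical Fredholm result at high smoothness combined with duality.

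With the a~priori estimate established, the extensions \eqref{Fredholm-B}, \eqref{Fredholm-F} follow by density. For the Fredholm structure I would pick $s_{1}\in(m+\pi(p,n),\alpha+2l]$, possible by the hypothesis on $\alpha$; since $s_{1}-2l\leq\alpha$ we have $E^{s_{1}}_{p,q}(\Omega)=E^{s_{1}}_{p,q}(A,E^{\alpha}_{p,q},\Omega)$ with equivalent quasi-norms, and the classical result \eqref{Fredholm-B-positive} (or \eqref{Fredholm-F-positive}) at $s_{1}$ corestricts to a Fredholm operator $(A,B)\colon E^{s_{1}}_{p,q}(\Omega)\to E^{\alpha}_{p,q}(\Omega)\times\prod_{j}B^{s_{1}-m_{j}-1/p}_{p,q}(\Gamma)$ with kernel $N$, index $\varkappa$, and complement $M$ (all lying in the corestricted target because $M\subset C^{\infty}(\overline{\Omega})\times(C^{\infty}(\Gamma))^{l}$). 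The $R$-construction above relates the extended operator at rough smoothness $s$ to this classical corestricted one modulo the smoothing correction $S$, transferring Fredholm property, identifying $\ker=N$ and $\mathrm{ind}=\varkappa$, and yielding the decompositions \eqref{direct-sum-negative-B}, \eqref{direct-sum-negative-F} from \eqref{direct-sum-positive-B}, \eqref{direct-sum-positive-F}. The main obstacle will be making rigorous the Lions--Magenes-type boundary estimate for $v$ in the Besov/Triebel--Lizorkin scale, which rests on the elliptic regularity theory of \cite{FrankeRunst95, Johnsen96} combined with the Dirichlet-vanishing of $S(Au)$ on $\Gamma$.
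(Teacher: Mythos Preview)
Your approach is genuinely different from the paper's, and it contains a real gap.

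The paper never proves a direct a~priori estimate of the type you write down. Instead it proceeds by a \emph{chain of interpolations}: starting from the known Hilbert case $p=q=2$ (where the result is essentially \cite[Theorem~3]{KasirenkoMikhailetsMurach19}), it passes to $p\neq2$, $q=2$ via the $\pm$-interpolation functor $\langle\cdot,\cdot,\theta\rangle$, then to general $q$ by another $\pm$-interpolation, and finally to the $B$-scale by real interpolation. At each step it interpolates a pair of Fredholm operators (one at a rough level already established, one classical at a high level $s_1>m+\pi(p,n)$) and invokes the abstract Lemma~\ref{prop-interp-Fredholm} or Corollary~\ref{cor-interp-Fredholm} to conclude that the interpolated operator is again Fredholm with the same kernel, index, and range complement~$M$. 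The interpolation formulas for the domains $E^{s}_{p,q}(A,E^{\alpha}_{p,q},\Omega)$ are supplied by Theorems~\ref{R-interp} and~\ref{plus-minus-interp}. Nowhere does the paper need to control $\|Bv\|$ for a single rough $v$ by the graph norm.

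Your decomposition $u=R(Au)+v$ handles the first summand cleanly, but the estimate you need for $v$ is precisely the theorem itself with $\alpha$ replaced by the larger value $\alpha+4lr$. The Dirichlet-vanishing of $Av=-S(Au)$ on $\Gamma$ does not by itself upgrade the boundary regularity of $v$: interior elliptic regularity gives nothing at $\Gamma$ without boundary conditions on $v$, and the classical results of \cite{FrankeRunst95,Johnsen96} that you cite apply only above the trace threshold $s>m+\pi(p,n)$. The ``duality'' route you allude to is the transposition method of Lions--Magenes, but that needs a Banach dual pairing and thus fails for $0<p<1$ or $0<q<1$, which are allowed here. So the step you flag as ``the main obstacle'' is not a technicality---it is the whole content of the theorem in the quasi-Banach range, and your sketch gives no argument for it.

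There is also a smaller error: the equality $E^{s_1}_{p,q}(\Omega)=E^{s_1}_{p,q}(A,E^{\alpha}_{p,q},\Omega)$ that you claim for $s_1-2l\leq\alpha$ holds only when $s_1-2l=\alpha$ (for $s_1-2l<\alpha$ the right-hand side is strictly smaller, since $Au\in E^{\alpha}_{p,q}$ is a genuine constraint). Taking $s_1=\alpha+2l$ does give a classical Fredholm operator at that level, but your ``$R$-construction relates the extended operator at rough smoothness $s$ to this classical one modulo $S$'' is too vague to transfer the Fredholm structure: applying $R$ to $f\in E^{\alpha}_{p,q}(\Omega)$ corrects the first component of $(f,g)$ up to a smoothing error, but the boundary datum $g-B(Rf)$ remains in the low-order space $\prod_j B^{s-m_j-1/p}_{p,q}(\Gamma)$, and you are back to solving the rough problem you started from.
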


Thus, this theorem is applicable to the elliptic problem \eqref{el-PDE} and \eqref{bound-cond} with boundary data of arbitrarily low (specifically, negative) regularity. Such data are said to be rough. Remark that, if $g_{1},\ldots, g_{l}$ are arbitrarily chosen distributions on $\Gamma$ and if $p,q\in(0,\infty)$, then there exists a small enough real number $s$ such that $g_{j}\in B^{s-m_j-1/p}_{p,g}(\Gamma)$ for each $j\in\{1,\ldots,l\}$. Hence, we may apply Theorem~\ref{th-Fredholm} to the elliptic problem with such boundary data if $f$ is sufficiently regular. Note that $f$ is allowed to have a certain negative regularity if $m\leq2l-1$ and $p\geq1$.

We will prove this theorem by a chain of interpolations of certain Fredholm operators \eqref{Fredholm-F-positive} and \eqref{Fredholm-F}. The Fredholm property of \eqref{Fredholm-F} is known in the special case where $p=q=2$ and additionally $\alpha>-1/2$ \cite[Theorem~3]{KasirenkoMikhailetsMurach19}. To this end we need the following abstract result concerning interpolation of Fredholm operators:

\begin{lemma}\label{prop-interp-Fredholm}
Let $[X_0,X_1]$ and $[Y_0,Y_1]$ be interpolation pairs of quasi-Banach spaces (resp., Banach spaces), and let the quasi-Banach space $X_{0}+X_{1}$ be dual reach. Suppose that certain Fredholm bounded linear operators $T_0:X_0\to Y_0$ and $T_1:X_1\to Y_1$ satisfy the following conditions:
\begin{itemize}
\item[(i)] $\ker T_0=\ker T_1=:\mathcal{N}$;
\item[(ii)] $\mathrm{ind}\,T_0=\mathrm{ind}\,T_1=:\kappa$;
\item[(iii)] $T_{0}u=T_{1}u$ whenever $u\in X_0\cap X_1$;
\item[(iv)] $Y_0=\mathcal{M}\dotplus T_0(X_0)$ and $Y_1=\mathcal{M}\dotplus T_1(X_1)$ for a certain  finite-dimensional space $\mathcal{M}\subset Y_0\cap Y_1$;
\item[(v)] $Y_1\cap T_{0}(X_0)\subset T_{0}(X_0\cap X_1)$ or $Y_0\cap T_{1}(X_1)\subset T_{1}(X_0\cap X_1)$.
\end{itemize}
Define a linear operator $T:X_0+X_1\to Y_0+Y_1$ by the formula $T(u_0+u_1):=T_0u_0+T_1u_1$ for all $u_0\in X_0$ and $u_1\in X_1$ (this  operator is well defined in view of \rm(iii)\it). Let $\mathfrak{F}$ be an arbitrary interpolation functor defined on the category of all interpolation pairs of quasi-Banach spaces (resp., Banach spaces). Then the restriction of $T$ to the space $\mathfrak{F}[X_0,X_1]$ is a Fredholm bounded operator $T_{\mathfrak{F}}:=T:\mathfrak{F}[X_0,X_1]\to\mathfrak{F}[Y_0,Y_1]$ such that
\begin{itemize}
\item[(a)] $\ker T_{\mathfrak{F}}=\mathcal{N}$;
\item[(b)] $\mathrm{ind}\,T_{\mathfrak{F}}=\kappa$;
\item[(c)] $T(\mathfrak{F}[X_0,X_1])=
    (T_0(X_0)+T_1(X_1))\cap\mathfrak{F}[Y_0,Y_1]$;
\item[(d)] $\mathfrak{F}[Y_0,Y_1]=\mathcal{M}\dotplus T(\mathfrak{F}[X_0,X_1])$.
\end{itemize}
\end{lemma}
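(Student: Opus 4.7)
The plan is to view $T$ as a single bounded operator on $X_0+X_1$, apply the interpolation functor, and then straighten $T$ out by a finite-rank perturbation landing in $\mathcal{M}$ so as to get an isomorphism on which interpolation behaves well. Hypotheses (iii)--(v) serve three distinct purposes: (iii) makes $T$ well defined on $X_0+X_1$; (v) is the transversality that pins down both the kernel and the intersection $\mathcal{M}\cap(T_0(X_0)+T_1(X_1))$; and dual reachness of $X_0+X_1$, inherited by its interpolation spaces, allows me to construct continuous biorthogonal functionals on the finite-dimensional subspace $\mathcal{N}$. First I would verify that $T:X_0+X_1\to Y_0+Y_1$ is well defined (two decompositions of the same element differ by an element of $X_0\cap X_1$, on which $T_0$ and $T_1$ agree by (iii)); property~(b) of $\mathfrak{F}$ then produces the bounded operator $T_{\mathfrak{F}}$.

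For assertion (a), the inclusion $\mathcal{N}\subset\ker T_{\mathfrak{F}}$ is immediate, since $\mathcal{N}\subset X_0\cap X_1\hookrightarrow\mathfrak{F}[X_0,X_1]$. Conversely, if $u\in\ker T_{\mathfrak{F}}$ and $u=u_0+u_1$ in $X_0+X_1$, then $T_0u_0=-T_1u_1\in Y_1\cap T_0(X_0)$; hypothesis~(v) (either alternative works, up to symmetry) furnishes $w\in X_0\cap X_1$ with $T_0u_0=T_0w$, so $u_0\in\mathcal{N}+(X_0\cap X_1)=X_0\cap X_1$, whence $u\in X_1$ and by (iii) $T_1u=Tu=0$, i.e.\ $u\in\mathcal{N}$. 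The same manoeuvre proves $\mathcal{M}\cap(T_0(X_0)+T_1(X_1))=\{0\}$: a decomposition $m=y_0+y_1$ with $y_j\in T_j(X_j)$ forces $y_0\in Y_1\cap T_0(X_0)\subset T_0(X_0\cap X_1)$, so by (iii) $y_0\in T_1(X_1)$, hence $m\in\mathcal{M}\cap T_1(X_1)=\{0\}$. In particular $Y_0+Y_1=\mathcal{M}\dotplus(T_0(X_0)+T_1(X_1))$, which already secures $\mathcal{M}\cap T(\mathfrak{F}[X_0,X_1])=\{0\}$, one half of (d).

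The substantive part is to show that every $v\in\mathfrak{F}[Y_0,Y_1]$ has the form $m+Tu$ with $m\in\mathcal{M}$ and $u\in\mathfrak{F}[X_0,X_1]$ (completing (d)) and to deduce (c) from this. I would first treat the case $\kappa=0$, i.e.\ $d:=\dim\mathcal{N}=\dim\mathcal{M}=:e$. Using dual reachness, I pick a basis $e_1,\dots,e_d$ of $\mathcal{N}$ with continuous biorthogonal functionals $\phi_i\in(X_0+X_1)^{\ast}$ and a basis $f_1,\dots,f_d$ of $\mathcal{M}$, and set $Ku:=\sum_{i=1}^{d}\phi_i(u)f_i$, a bounded finite-rank map from $X_0+X_1$ into $\mathcal{M}\subset Y_0\cap Y_1$. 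A direct bijectivity check using the direct sums in (iv) shows that each $\tilde T_j:=T_j+K|_{X_j}:X_j\to Y_j$ is a bounded bijection, hence an isomorphism by the open mapping theorem for quasi-Banach spaces \cite[Section~2]{Kalton03}. Interpolating their inverses yields a two-sided bounded inverse of $\tilde T_{\mathfrak{F}}:=T_{\mathfrak{F}}+K|_{\mathfrak{F}[X_0,X_1]}$. For $v\in\mathfrak{F}[Y_0,Y_1]$, the element $u:=\tilde T_{\mathfrak{F}}^{-1}v$ satisfies $v=Tu+Ku$ with $Ku\in\mathcal{M}$, giving (d); if moreover $v\in T_0(X_0)+T_1(X_1)$, then $Ku=v-Tu$ lies in $\mathcal{M}\cap(T_0(X_0)+T_1(X_1))=\{0\}$, yielding (c). Assertion (b) is then dimension counting in (a) and (d).

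The main obstacle I anticipate is the case $\kappa\neq 0$, in which no finite-rank perturbation can turn all the $\tilde T_j$ simultaneously into isomorphisms. My workaround is to augment the interpolation pair by a common finite-dimensional factor $\mathbb{C}^{|\kappa|}$: extend the $X_j$ (when $\kappa<0$) or the $Y_j$ (when $\kappa>0$) by direct summation and extend $T_j$ by a fixed rank-$|\kappa|$ map built from the $\phi_i$ or the $f_i$. The augmented operator has index zero, and the augmented interpolation pair still interpolates to $\mathfrak{F}[X_0,X_1]\oplus\mathbb{C}^{|\kappa|}$ or $\mathfrak{F}[Y_0,Y_1]\oplus\mathbb{C}^{|\kappa|}$ because $\mathfrak{F}$ commutes with direct summation by a fixed finite-dimensional space. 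The $\kappa=0$ recipe then applies verbatim to the augmented operator, and projecting back recovers (a)--(d) in full generality. Closedness of $T(\mathfrak{F}[X_0,X_1])$, and thus the Fredholm property of $T_{\mathfrak{F}}$, is automatic from the direct sum in (d) since $\mathcal{M}$ is finite-dimensional.
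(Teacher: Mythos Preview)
Your argument is correct and takes a genuinely different route from the paper's. The paper does \emph{not} perturb $T_j$ to an isomorphism. Instead it passes to the induced isomorphisms $T_j:X_j/\mathcal{N}\leftrightarrow T_j(X_j)$, interpolates them, and then identifies the two sides: it shows $\mathfrak{F}[X_0/\mathcal{N},X_1/\mathcal{N}]=(\mathfrak{F}[X_0,X_1])/\mathcal{N}$ via the bounded projector onto $\mathcal{N}$ built from biorthogonal functionals (exactly your $\phi_i$), and it shows $\mathfrak{F}[T_0(X_0),T_1(X_1)]=\mathfrak{F}[Y_0,Y_1]\cap T(X_0+X_1)$ by gluing the projectors $\mathcal{P}_j$ of $Y_j$ onto $T_j(X_j)$ parallel to $\mathcal{M}$ into a single projector $\mathcal{P}$ on $Y_0+Y_1$; condition~(v) is what makes $\mathcal{P}_0$ and $\mathcal{P}_1$ agree on $Y_0\cap Y_1$. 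This approach treats kernel and cokernel asymmetrically and needs no case split on the sign of $\kappa$. Your approach is more symmetric and arguably more elementary---one finite-rank correction plus a dimension-count reduction---at the cost of the $\kappa\neq0$ augmentation step (which is routine, since $\mathfrak{F}[E\oplus\mathbb{C}^r,E'\oplus\mathbb{C}^r]=\mathfrak{F}[E,E']\oplus\mathbb{C}^r$ follows by interpolating the obvious projections and inclusions).

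One point you should make explicit: to ``interpolate their inverses'' you need the $\tilde T_j^{-1}$ to be restrictions of a single map on $Y_0+Y_1$, i.e.\ $\tilde T:X_0+X_1\to Y_0+Y_1$ must be a bijection. Injectivity is not automatic from each $\tilde T_j$ being an isomorphism; it uses (v) through the two facts you already proved: if $\tilde T(u_0+u_1)=0$ then $T(u_0+u_1)=-K(u_0+u_1)\in\mathcal{M}\cap(T_0(X_0)+T_1(X_1))=\{0\}$, hence $u_0+u_1\in\ker T=\mathcal{N}$ and then $K(u_0+u_1)=0$ forces $u_0+u_1=0$. Once $\tilde T_{\mathfrak{F}}$ is an isomorphism, $T_{\mathfrak{F}}=\tilde T_{\mathfrak{F}}-K$ is a finite-rank perturbation of an invertible operator between dual-reach quasi-Banach spaces, so it is Fredholm with closed range; alternatively, $v\mapsto K\tilde T_{\mathfrak{F}}^{-1}v$ is a bounded projector of $\mathfrak{F}[Y_0,Y_1]$ onto $\mathcal{M}$ whose kernel is exactly $T_{\mathfrak{F}}(\mathfrak{F}[X_0,X_1])$, which makes the closedness in your last sentence precise.
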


A version of Lemma~\ref{prop-interp-Fredholm} is obtained in \cite[Proposition~5.2]{Geymonat65} in the case of Banach spaces and the dense continuous embedding $Y_{1}\hookrightarrow Y_{0}$. Various questions about the interpolation of Fredholm operators are considered, e.g., in \cite{AsekritovaKruglyakMastylo} for Banach spaces and in \cite{KaltonMitrea98} for quasi-Banach spaces. The proof of Lemma~\ref{prop-interp-Fredholm} is rather technical. We will give it in Appendix~B and then substantiate the following result:

\begin{corollary}\label{cor-interp-Fredholm}
Let $[X_0,X_1]$ and $[Y_0,Y_1]$ be interpolation pairs of quasi-Banach spaces (resp., Banach spaces) such that the dense continuous embeddings $X_1\hookrightarrow X_0$ and $Y_1\hookrightarrow Y_0$ are valid and that the quasi-Banach space $X_{0}$ is dual reach. Suppose that certain Fredholm bounded linear operators $T_0:X_0\to Y_0$ and $T_1:X_1\to Y_1$ satisfy hypotheses $(\mathrm{i})$--$(\mathrm{iii})$ of Lemma~$\ref{prop-interp-Fredholm}$. Then its conclusion hold true, with $T_0(X_0)+T_1(X_1)=T_0(X_0)$ and with $\mathcal{M}$ being a complement of $T_1(X_1)$ in $Y_1$ (such a complement exists).
\end{corollary}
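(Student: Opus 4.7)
The plan is to deduce the corollary directly from Lemma~\ref{prop-interp-Fredholm} by verifying its hypotheses~(iv) and~(v), which are not assumed in the statement here. The dense continuous embeddings $X_1\hookrightarrow X_0$ and $Y_1\hookrightarrow Y_0$ give $X_0+X_1=X_0$, $X_0\cap X_1=X_1$, and analogously $Y_0+Y_1=Y_0$, $Y_0\cap Y_1=Y_1$, all up to equivalence of quasi-norms. Hence the operator $T$ produced in Lemma~\ref{prop-interp-Fredholm} reduces to $T_0$ on $X_0$, and $X_0+X_1=X_0$ is dual reach by hypothesis. The Fredholm property of $T_1$ makes $T_1(X_1)$ closed of finite codimension in $Y_1$, so I can choose any finite-dimensional algebraic complement $\mathcal{M}\subset Y_1=Y_0\cap Y_1$; this is the $\mathcal{M}$ claimed by the corollary.

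The key step is the derivation of hypothesis~(v), which I plan to obtain by a cokernel-comparison argument. Condition~(iii) gives $T_1(X_1)\subset T_0(X_0)$, so the inclusion $Y_1\hookrightarrow Y_0$ induces a continuous linear map
\[
\iota:Y_1/T_1(X_1)\to Y_0/T_0(X_0),\qquad [v]_1\mapsto[v]_0.
\]
Both quotients are finite-dimensional of the same dimension $\dim\mathcal{N}-\kappa$ by hypotheses~(i),~(ii) and the Fredholmness of $T_0,T_1$; the target is Hausdorff since $T_0(X_0)$ is closed in $Y_0$. Density of $Y_1$ in $Y_0$, combined with the continuous surjection $Y_0\to Y_0/T_0(X_0)$, makes the range of $\iota$ dense in a finite-dimensional Hausdorff space, hence everything; equality of dimensions then forces $\iota$ to be injective. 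Injectivity reads $Y_1\cap T_0(X_0)=T_1(X_1)=T_1(X_0\cap X_1)$, which is hypothesis~(v).

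Hypothesis~(iv) then follows cheaply: $\mathcal{M}\cap T_0(X_0)\subset Y_1\cap T_0(X_0)=T_1(X_1)$ forces $\mathcal{M}\cap T_0(X_0)\subset\mathcal{M}\cap T_1(X_1)=\{0\}$, after which the equality $\dim\mathcal{M}=\dim\mathrm{coker}\,T_1=\dim\mathrm{coker}\,T_0$ yields $Y_0=\mathcal{M}\dotplus T_0(X_0)$; together with $Y_1=\mathcal{M}\dotplus T_1(X_1)$ by choice of $\mathcal{M}$, this is~(iv). With all hypotheses of Lemma~\ref{prop-interp-Fredholm} in force, its conclusion applies verbatim, and the simplification $T_0(X_0)+T_1(X_1)=T_0(X_0)$ stated in the corollary is immediate from $T_1(X_1)\subset T_0(X_0)$. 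The main obstacle is the density-plus-dimension argument producing~(v); once that is in hand everything else is bookkeeping with cokernels.
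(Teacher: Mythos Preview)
Your argument is correct. One point worth noting: you work to establish the \emph{first} branch of hypothesis~(v), namely $Y_1\cap T_0(X_0)\subset T_0(X_0\cap X_1)$, but since $X_0\cap X_1=X_1$ the \emph{second} branch $Y_0\cap T_1(X_1)\subset T_1(X_0\cap X_1)=T_1(X_1)$ is a tautology. The paper simply invokes this triviality for~(v) and then proves the inclusion $Y_1\cap T_0(X_0)\subset T_1(X_1)$ separately, solely in order to obtain~(iv).

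For that inclusion the two proofs diverge. The paper passes to adjoints: the dense embeddings force $\ker T_0'\subset\ker T_1'$, equality of indices gives equality of dimensions hence $\ker T_0'=\ker T_1'$, and then Proposition~\ref{prop-range} (the range is the annihilator of $\ker T'$) yields $Y_1\cap T_0(X_0)\subset T_1(X_1)$. Your route via the induced map $\iota:Y_1/T_1(X_1)\to Y_0/T_0(X_0)$ between equal-dimensional finite-dimensional cokernels, using density of $Y_1$ in $Y_0$ to force surjectivity and hence injectivity, reaches the same conclusion without any appeal to duals or to Proposition~\ref{prop-range}. This is a genuinely more elementary argument; the paper's approach, on the other hand, makes the role of the dual-rich hypothesis on $X_0$ more transparent (it enters through Proposition~\ref{prop-range}), whereas in your version that hypothesis is used only as a standing assumption of Lemma~\ref{prop-interp-Fredholm}. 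Once the inclusion is in hand, your derivation of~(iv) is essentially identical to the paper's.
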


\begin{proof}[Proof of Theorem $\ref{th-Fredholm}$.] We divide it in six steps, on each of which a certain pair of Fredholm operators will be interpolated. We will write equalities of quasi-Banach spaces up to equivalence of quasi-norms.

\emph{Step~$1$.} We first prove Theorem~\ref{th-Fredholm} in the $p=q=2$ case of Hilbert spaces. The Besov space $B^{\sigma}_{2,2}(\cdot)$ and Triebel--Lizorkin space $F^{\sigma}_{2,2}(\cdot)$ coincide with the Sobolev space $H^{\sigma}_{2}(\cdot)$ up to equivalence norms whenever $\sigma\in\mathbb{R}$. In this case, the operators \eqref{Fredholm-B} and \eqref{Fredholm-F} coincide and are considered on the pair of Hilbert spaces
\begin{equation}\label{Fredholm-H}
(A,B):H^{s}_{2}(A,H^{\alpha}_{2},\Omega)\to H^{\alpha}_{2}(\Omega)\times
\prod_{j=1}^{l}H^{s-m_j-1/2}_{2}(\Gamma)
\end{equation}
whenever $s\leq m+1/2$ and $\alpha>m-2l+1/2$; note that $\pi(2,n)=1/2$. If in addition $\alpha>-1/2$, then the required properties of \eqref{Fredholm-H} is due to \cite[Theorem~3]{KasirenkoMikhailetsMurach19} excepting the representation
\begin{equation}\label{direct-sum-H}
H^{\alpha}_{2}(\Omega)\times\prod_{j=1}^{l}H^{s-m_j-1/2}_{2}(\Gamma)=
M\dotplus(A,B)(H^{s}_{2}(A,H^{\alpha}_{2},\Omega))
\end{equation}
(the $m\geq2l$ case is covered by \cite[Theorem~1]{AnopKasirenkoMurach18UMJ3}, whereas the $m\leq2l-1$ case is treated analogously to \cite[Section~4.4.3]{MikhailetsMurach14}, where regular elliptic boundary-value problems are studied).

Let us substantiate \eqref{direct-sum-H} in the case of $\alpha>-1/2$.
As indicated, $M$ and co-kernel of \eqref{Fredholm-H} are of the same dimension $-\varkappa+\dim N$. Hence, \eqref{direct-sum-H} holds true if
\begin{equation}\label{M-range-0}
M\cap(A,B)(H^{s}_{2}(A,H^{\alpha}_{2},\Omega))=\{0\}.
\end{equation}
To show this equality we choose $s_{1}>\max\{m+1/2,\alpha+2l\}$ and consider
the Fredholm bounded operator
\begin{equation}\label{Fredholm-H-positive}
(A,B):H^{s_1}_{2}(\Omega)\to
H^{s_{1}-2l}_{2}(\Omega)\times\prod_{j=1}^{l}H^{s_1-m_j-1/2}_{2}(\Gamma),
\end{equation}
which is a restriction of the Fredholm operator \eqref{Fredholm-H}. Their co-kernels have the same dimension $-\varkappa+\dim N$. Then the kernels of the adjoint operators to \eqref{Fredholm-H} and \eqref{Fredholm-H-positive} coincide with the space $W$ indicated in Remark~\ref{adjoint-kernel}. Hence, if a vector $(f,g)\in M$ belongs to the range of \eqref{Fredholm-H}, then $\omega(f,g)=0$ for every functional $\omega\in W$, which implies that this vector belongs to the range of \eqref{Fredholm-H-positive}; i.e.,
\begin{equation*}
(f,g)\in M\cap(A,B)(H^{s}_{2}(A,H^{\alpha}_{2},\Omega))\;\Longrightarrow\;
(f,g)\in M\cap(A,B)(H^{s_1}_{2}(\Omega))=\{0\},
\end{equation*}
with the equality to $\{0\}$ being a special case of  \eqref{direct-sum-positive-F}. Thus, \eqref{M-range-0} holds true (which implies the required property \eqref{direct-sum-H}).

It remains to show that the conclusion of Theorem~\ref{th-Fredholm} is true in the $p=q=2$ case without the assumption $\alpha>-1/2$. This assumption imposes a supplementary restriction on $\alpha$ only in the $m\leq2l-2$ case (specifically for the Dirichlet boundary conditions). Thus, we consider this case and subject $\alpha$ only to the condition $\alpha>m-2l+1/2$. Choose numbers $s_1>m+1/2$ and $\alpha_0>-1/2$ so that $s_1-2l<\alpha<\alpha_0$. Then
\begin{equation}\label{mean-alpha}
\alpha=(1-\theta)\alpha_{0}+\theta(s_1-2l)
\quad\mbox{for a unique number}\quad\theta\in(0,1),
\end{equation}
and
\begin{equation}\label{mean-s}
s_0:=\frac{s-\theta s_1}{1-\theta}<s,\quad\mbox{with}\quad s=(1-\theta)s_0+\theta s_1
\end{equation}
(the last inequality is due to $s_1>m+1/2\geq s$).

Consider two Fredholm bounded operators
\eqref{Fredholm-H-positive} and
\begin{equation}\label{Fredholm-H-negative}
(A,B):H^{s_0}_{2}(A,H^{\alpha_0}_{2},\Omega)\to H^{\alpha_0}_{2}(\Omega)\times\prod_{j=1}^{l}H^{s_0-m_j-1/2}_{2}(\Gamma)
\end{equation}
(recall that $s_0<s\leq m+1/2$ and $\alpha_0>-1/2$). As was indicated, they have the common kernel $N$ and index $\varkappa$, and the space $M$ is a complement of the range of each operator in its target space. Thus, the operators \eqref{Fredholm-H-negative} and \eqref{Fredholm-H-positive} satisfy hypotheses (i), (ii), and (iv) of Lemma~\ref{prop-interp-Fredholm} provided that we write these operators in the form $T_{0}:X_{0}\to Y_{0}$ and $T_{1}:X_{1}\to Y_{1}$, resp. (Since $X_{0}\not\subset X_{1}$ and $X_{1}\not\subset X_{0}$, we cannot apply Corollary~\ref{cor-interp-Fredholm} to these operators.) Moreover, they also satisfy hypothesis (iii), i.e. coincide on the Banach space
\begin{equation}\label{intersection-domains}
X_{0}\cap X_{1}=H^{s_0}_{2}(A,H^{\alpha_0}_{2},\Omega)\cap H^{s_1}_{2}(\Omega)=
H^{s_1}_{2}(A,H^{\alpha_0}_{2},\Omega).
\end{equation}
This is true because $C^{\infty}(\overline{\Omega})$ is dense in $H^{s_1}_{2}(A,H^{\alpha_0}_{2},\Omega)$ by Theorem~\ref{B-F-sep-dense}
and since these operators coincide on $C^{\infty}(\overline{\Omega})$. The operators also satisfy hypothesis (v). Indeed, if a vector $(f,g)$ belongs to $Y_{1}\cap T_{0}(X_{0})$, then $\omega(f,g)=0$ for every functional $\omega\in W$, which implies that
$(f,g)\in T_{1}(X_{1})$. Recall that $W$ is the common kernel of the adjoint operators to \eqref{Fredholm-H-negative} and \eqref{Fredholm-H-positive}. Thus, $(A,B)u=(f,g)$ for a certain distribution $u\in H^{s_1}_{2}(\Omega)$, with $Au=f\in H^{\alpha_0}_{2}(\Omega)$. We conclude by \eqref{intersection-domains} that $u\in X_{0}\cap X_{1}$, which yields the inclusion $(f,g)\in T_{0}(X_{0}\cap X_{1})$, as hypothesis~(v) claims.

Defining the operator $T:X_0+X_1\to Y_0+Y_1$ as in Lemma~\ref{prop-interp-Fredholm} and interpolating the Fredholm bounded operators \eqref{Fredholm-H-negative} and \eqref{Fredholm-H-positive} by means of the complex interpolation functor $\mathfrak{F}[X_0,X_1]=[X_0,X_1]_{\theta}$, we conclude by this lemma that the restriction of $T$ to the space $[X_0,X_1]_{\theta}$ is a Fredholm bounded operator from this space to $[Y_0,Y_1]_{\theta}$. Here,
\begin{align*}
[X_0,X_1]_{\theta}&=
[H^{s_0}_{2}(A,H^{\alpha_0}_{2},\Omega),H^{s_1}_{2}(\Omega)]_{\theta}=
[H^{s_0}_{2}(A,H^{\alpha_0}_{2},\Omega),
H^{s_1}_{2}(A,H^{s_1-2l}_{2}\Omega)]_{\theta}\\
&=H^{s}_{2}(A,H^{\alpha}_{2},\Omega)
\end{align*}
by Theorem~\ref{C-interp} in view of \eqref{mean-alpha} and \eqref{mean-s}. (Note that $H^{\alpha_0}_{2}(\Omega)\subset H^{s_0-2l}_{2}(\Omega)$, which follows from $s_0-2l<s_1-2l<\alpha<\alpha_0$ and is required in Theorem~\ref{C-interp}). As is well known,
\begin{align*}
[Y_0,Y_1]_{\theta}&=
[H^{\alpha_0}_{2}(\Omega),H^{s_1-2l}_{2}(\Omega)]_{\theta}
\times\prod_{j=1}^{l}
[H^{s_0-m_j-1/2}_{2}(\Gamma),H^{s_1-m_j-1/2}_{2}(\Gamma)]_{\theta}\\
&=H^{\alpha}_{2}(\Omega)\times
\prod_{j=1}^{l}H^{s-m_j-1/2}_{2}(\Gamma).
\end{align*}
Since $C^{\infty}(\overline{\Omega})$ is dense in $H^{s}_{2}(A,H^{\alpha}_{2},\Omega)$ by Theorem~\ref{B-F-sep-dense}, the last Fredholm bounded operator is a unique extension by continuity of the mapping $u\mapsto(Au,Bu)$, with $u\in C^{\infty}(\overline{\Omega})$. Owing to Lemma~\ref{prop-interp-Fredholm}, this operator has all properties indicated in Theorem~\ref{th-Fredholm}. The case $p=q=2$ has been  treated.

\emph{Step~$2$.} This step and the next two are devoted to the proof of  the Fredholm property for operator \eqref{Fredholm-F} in the case where $p\neq2$ and $q=2$. Put $p_{1}:=p/2$ if $p<2$, and let $p_{1}:=2p$ if $p>2$. Since $1/p$ is located between $1/2$ and $1/p_{1}$, we conclude that
\begin{equation}\label{mean-p}
\frac{1}{p}=\frac{1-\theta}{2}+\frac{\theta}{p_1}
\quad\mbox{for a unique number}\quad\theta\in(0,1).
\end{equation}
Suppose on this step that
\begin{equation}\label{step2-cond-alpha}
\alpha>m-2l+\max\biggl\{\pi(p,n),\pi(p_1,n),\frac{1}{2}\biggr\}
\end{equation}
and
\begin{equation}\label{step2-cond-s}
s<-\max\biggl\{0,n\biggl(\frac{1}{p_1}-\frac{1}{2}\biggr)\biggr\}.
\end{equation}
Put
\begin{equation}\label{step2-mean-s}
s_0:=\frac{s-\theta(\alpha+2l)}{1-\theta}<s,\quad\mbox{with}\quad s=(1-\theta)s_0+\theta(\alpha+2l).
\end{equation}
We consider the Fredholm bounded operators
\begin{equation}\label{step2-Fredholm-H-negative}
(A,B):F^{s_0}_{2,2}(A,F^{\alpha}_{2,2},\Omega)\to F^{\alpha}_{2,2}(\Omega)\times
\prod_{j=1}^{l}B^{s_0-m_j-1/2}_{2,2}(\Gamma)
\end{equation}
and
\begin{equation}\label{step2-Fredholm-F-positive}
(A,B):F^{\alpha+2l}_{p_1,2}(\Omega)\to F^{\alpha}_{p_1,2}(\Omega)\times
\prod_{j=1}^{l}B^{\alpha+2l-m_j-1/p_1}_{p_1,p_1}(\Gamma).
\end{equation}
They have the common kernel $N$ and index $\varkappa$, and the space $M$ is a complement of the range of each operator in its target space. Recall that these properties were proved for \eqref{step2-Fredholm-H-negative} on Step~1, and note that \eqref{step2-cond-alpha} allows these operators to be well defined.

Thus, the operators \eqref{step2-Fredholm-H-negative} and \eqref{step2-Fredholm-F-positive} satisfy hypotheses (i), (ii), and (iv) of Lemma~\ref{prop-interp-Fredholm} provided that we write the operators in the form $T_{0}:X_{0}\to Y_{0}$ and $T_{1}:X_{1}\to Y_{1}$, resp. They satisfy hypothesis (iii), i.e. coincide on the quasi-Banach space
\begin{equation}\label{step2-intersection-domains}
X_{0}\cap X_{1}=F^{s_0}_{2,2}(A,F^{\alpha}_{2,2},\Omega)\cap F^{\alpha+2l}_{p_1,2}(\Omega)=
F^{\alpha+2l}_{p_1,2}(A,F^{\alpha}_{2,2},\Omega).
\end{equation}
Here, the last equality is true because  $F^{\alpha+2l}_{p_1,2}(\Omega)\hookrightarrow F^{s_0}_{2,2}(\Omega)$ due to \eqref{step2-cond-s} and \eqref{embedding-Omega}. We also take into account that  $C^{\infty}(\overline{\Omega})$ is dense in $F^{\alpha+2l}_{p_1,2}(A,F^{\alpha}_{2,2},\Omega)$ by Theorem~\ref{B-F-sep-dense} and that these operators coincide on $C^{\infty}(\overline{\Omega})$. The operators \eqref{step2-Fredholm-H-negative} and \eqref{step2-Fredholm-F-positive} satisfy hypothesis (v), which is proved in a similar way to that used on Step~1. Indeed, if a vector $(f,g)$ belongs to $Y_{1}\cap T_{0}(X_{0})$, then $\omega(f,g)=0$ whenever $\omega\in W$, which implies
$(f,g)\in T_{1}(X_{1})$. Recall that $W$ (indicated in Remark~\ref{adjoint-kernel}) is the common kernel of the adjoint operators to \eqref{step2-Fredholm-H-negative} and \eqref{step2-Fredholm-F-positive}, as was shown on Step~1. Thus, $(A,B)u=(f,g)$ for a certain distribution $u\in F^{\alpha+2l}_{p_1,2}(\Omega)$, with $Au=f\in F^{\alpha}_{2,2}(\Omega)$. Therefore, $(f,g)\in T_{0}(X_{0}\cap X_{1})$
by \eqref{step2-intersection-domains}. (Remark that $X_{0}\not\subset X_{1}$ and that $p_{1}<2$ implies $X_{1}\not\subset X_{0}$; hence we cannot resort to Corollary~\ref{cor-interp-Fredholm} in the $p<2$ case, instead of Lemma~\ref{prop-interp-Fredholm}.)

Defining the operator $T:X_0+X_1\to Y_0+Y_1$ as in Lemma~\ref{prop-interp-Fredholm} and interpolating the Fredholm bounded operators \eqref{step2-Fredholm-H-negative} and \eqref{step2-Fredholm-F-positive} by the interpolation functor
$\mathfrak{F}[X_0,X_1]=\langle X_{0},X_{1},\theta\rangle$ (used in Theorem~\ref{plus-minus-interp}), we conclude by this lemma that the restriction of $T$ to the space $\langle X_{0},X_{1},\theta\rangle$ is a Fredholm bounded operator from this space to $\langle Y_{0},Y_{1},\theta\rangle$. Here,
\begin{align*}
\langle X_{0},X_{1},\theta\rangle&=
\langle F^{s_0}_{2,2}(A,F^{\alpha}_{2,2},\Omega), F^{\alpha+2l}_{p_1,2}(\Omega),\theta\rangle=
\langle F^{s_0}_{2,2}(A,F^{\alpha}_{2,2},\Omega), F^{\alpha+2l}_{p_1,2}(A,F^{\alpha}_{p_1,2},\Omega),\theta\rangle\\
&=F^{s}_{p,2}(A,F^{\alpha}_{p,2},\Omega)
\end{align*}
by Theorem~\ref{plus-minus-interp} in view of \eqref{mean-p} and \eqref{step2-mean-s}. (Note that $F^{\alpha}_{2,2}(\Omega)\subset F^{s_0-2l}_{2,2}(\Omega)$, which results from $\alpha>0>s_0-2l$ and is required by the hypothesis \eqref{inclusion-C-interp} of this theorem.) Furthermore,
\begin{align*}
\langle Y_{0},Y_{1},\theta\rangle&=
\langle F^{\alpha}_{2,2}(\Omega),F^{\alpha}_{p_1,2}(\Omega),\theta\rangle
\times\prod_{j=1}^{l}
\langle B^{s_0-m_j-1/2}_{2,2}(\Gamma),
B^{\alpha+2l-m_j-1/p_1}_{p_1,p_1}(\Gamma),\theta\rangle\\
&=F^{\alpha}_{p,2}(\Omega)\times
\prod_{j=1}^{l}B^{s-m_j-1/p}_{p,p}(\Gamma),
\end{align*}
which follows from the interpolation formulas for counterparts of these spaces for $\mathbb{R}^{n}$ (see \cite[p.~134]{FrazierJawerth90} or \cite[Theorem~2.12 (i)]{YuanSickelYang15}; the transition from $\mathbb{R}^{n-1}$ to $\Gamma$ in these formulas is a standard procedure, cf. \cite[Proof of Theorem~2.2]{MikhailetsMurach14}). Since $C^{\infty}(\overline{\Omega})$ is dense in $F^{s}_{p,2}(A,F^{\alpha}_{p,2},\Omega)$ by Theorem~\ref{B-F-sep-dense}, the last Fredholm bounded operator is a unique extension by continuity of the mapping $u\mapsto(Au,Bu)$, with $u\in C^{\infty}(\overline{\Omega})$. Owing to Lemma~\ref{prop-interp-Fredholm}, the operator \eqref{Fredholm-F} is well defined and has all properties indicated in Theorem~\ref{th-Fredholm} under the conditions $p\neq2$, $q=2$, \eqref{step2-cond-alpha}, and \eqref{step2-cond-s}.

\emph{Step~$3$.} We continue to consider the case where $p\neq2$ and $q=2$ and suppose that $\alpha$ satisfies condition \eqref{step2-cond-alpha}, with $p_{1}$ being defined as above. However, we do note subject $s$ to the supplementary condition \eqref{step2-cond-s}. Choose a number $s_{0}$ so that
\begin{equation*}
s_{0}<-\max\biggl\{0,n\biggl(\frac{1}{p_1}-\frac{1}{2}\biggr)\biggr\}
\quad\mbox{and}\quad s_{0}<s.
\end{equation*}
Since $s_{0}<s<\alpha+2l$, we have $s=(1-\theta)s_0+\theta(\alpha+2l)$ for a unique number $\theta\in(0,1)$.

Consider the Fredholm bounded operators
\begin{equation}\label{step3-Fredholm-negative}
(A,B):F^{s_0}_{p,2}(A,F^{\alpha}_{p,2},\Omega)\to F^{\alpha}_{p,2}(\Omega)\times
\prod_{j=1}^{l}B^{s_0-m_j-1/p}_{p,p}(\Gamma)
\end{equation}
and
\begin{equation}\label{step3-Fredholm-positive}
(A,B):F^{\alpha+2l}_{p,2}(\Omega)\to F^{\alpha}_{p,2}(\Omega)\times
\prod_{j=1}^{l}B^{\alpha+2l-m_j-1/p}_{p,p}(\Gamma).
\end{equation}
They have the common kernel $N$ and index $\varkappa$, and the space $M$ is a complement of the range of each operator in its target space. These properties were proved for \eqref{step3-Fredholm-negative} on Step~2, and \eqref{step2-cond-alpha} allows these operators to be well defined. Thus, the operators \eqref{step3-Fredholm-negative} and \eqref{step3-Fredholm-positive} satisfy the hypotheses of Corollary~\ref{cor-interp-Fredholm} provided that we denote them by  $T_{0}:X_{0}\to Y_{0}$ and $T_{1}:X_{1}\to Y_{1}$, resp. Note that the density of the imbedding $X_{1}\hookrightarrow X_{0}$ follows from Theorem~\ref{B-F-sep-dense}.

Owing to Corollary~\ref{cor-interp-Fredholm}, the restriction of the operator \eqref{step3-Fredholm-negative} to the space $\langle X_{0},X_{1},\theta\rangle$ is a Fredholm bounded operator from this space to $\langle Y_{0},Y_{1},\theta\rangle$. Here,
\begin{align*}
\langle X_{0},X_{1},\theta\rangle&=
\langle F^{s_0}_{p,2}(A,F^{\alpha}_{p,2},\Omega), F^{\alpha+2l}_{p,2}(\Omega),\theta\rangle=
\langle F^{s_0}_{p,2}(A,F^{\alpha}_{p,2},\Omega), F^{\alpha+2l}_{p,2}(A,F^{\alpha}_{p,2},\Omega),\theta\rangle\\
&=F^{s}_{p,2}(A,F^{\alpha}_{p,2},\Omega)
\end{align*}
by Theorem~\ref{plus-minus-interp}, and
\begin{align*}
\langle Y_{0},Y_{1},\theta\rangle&=
\langle F^{\alpha}_{p,2}(\Omega),F^{\alpha}_{p,2}(\Omega),\theta\rangle
\times\prod_{j=1}^{l}
\langle B^{s_0-m_j-1/p}_{p,p}(\Gamma),
B^{\alpha+2l-m_j-1/p}_{p,p}(\Gamma),\theta\rangle\\
&=F^{\alpha}_{p,2}(\Omega)\times
\prod_{j=1}^{l}B^{s-m_j-1/p}_{p,p}(\Gamma).
\end{align*}
Since $C^{\infty}(\overline{\Omega})$ is dense in $F^{s}_{p,p}(A,F^{\alpha}_{p,p},\Omega)$ by Theorem~\ref{B-F-sep-dense}, this Fredholm bounded operator is a unique extension by continuity of the mapping $u\mapsto(Au,Bu)$, with $u\in C^{\infty}(\overline{\Omega})$. Hence, according to Corollary~\ref{cor-interp-Fredholm}, the operator \eqref{Fredholm-F} is well defined and has all properties indicated in Theorem~\ref{th-Fredholm} under the supplementary conditions $p\neq2$, $q=2$, and \eqref{step2-cond-alpha}.

\emph{Step~$4$.} Considering the same case where $p\neq2$ and $q=2$, we can now prove the Fredholm property for operator \eqref{Fredholm-F} without imposing \eqref{step2-cond-alpha} on $\alpha$. Our reasoning will be somewhat similar to that given on Step~1. Choose numbers $s_1>m+\pi(p,n)$ and
\begin{equation}\label{step3-cond-alpha0}
\alpha_{0}>m-2l+\max\biggl\{\pi(p,n),\pi(p_1,n),\frac{1}{2}\biggr\}
\end{equation}
so that $s_1-2l<\alpha<\alpha_0$. Then \eqref{mean-alpha} and \eqref{mean-s} hold true.

Consider two Fredholm bounded operators
\begin{equation}\label{step4-Fredholm-negative}
(A,B):F^{s_0}_{p,2}(A,F^{\alpha_0}_{p,2},\Omega)\to F^{\alpha_0}_{p,2}(\Omega)\times
\prod_{j=1}^{l}B^{s_0-m_j-1/p}_{p,p}(\Gamma)
\end{equation}
and
\begin{equation}\label{step4-Fredholm-positive}
(A,B):F^{s_1}_{p,2}(\Omega)\to F^{s_1-2l}_{p,2}(\Omega)\times
\prod_{j=1}^{l}B^{s_1-m_j-1/p}_{p,p}(\Gamma).
\end{equation}
They have the common kernel $N$ and index $\varkappa$, and the space $M$ is a complement of the range of each operator in its target space. These properties were proved for \eqref{step4-Fredholm-negative} on Step~3 because $\alpha_0$ satisfies \eqref{step3-cond-alpha0}. Thus, the operators \eqref{step4-Fredholm-negative} and \eqref{step4-Fredholm-positive} written as $T_{0}:X_{0}\to Y_{0}$ and $T_{1}:X_{1}\to Y_{1}$ resp. satisfy hypotheses (i), (ii), and (iv) of Lemma~\ref{prop-interp-Fredholm}. (Note that $X_{0}\not\subset X_{1}$ and $X_{1}\not\subset X_{0}$ because $s_{0}<s_{1}$ and $s_1-2l<\alpha_0$.) These operators satisfy hypothesis (iii) because they coincide on the set $C^{\infty}(\overline{\Omega})$, which is dense in the quasi-Banach space
\begin{equation}\label{intersection-domains-F}
X_{0}\cap X_{1}=F^{s_0}_{p,2}(A,F^{\alpha_0}_{p,2},\Omega)\cap F^{s_1}_{p,2}(\Omega)=
F^{s_1}_{p,2}(A,F^{\alpha_0}_{p,2},\Omega)
\end{equation}
by Theorem~\ref{B-F-sep-dense}. The operators also satisfy hypothesis (v). Indeed, if $(f,g)\in Y_{1}\cap T_{0}(X_{0})$, then $\omega(f,g)=0$ whenever $\omega\in W$, which implies $(f,g)\in T_{1}(X_{1})$. Thus, $(A,B)u=(f,g)$ for certain $u\in F^{s_1}_{p,2}(\Omega)$, with $Au=f\in F^{\alpha_0}_{p,2}(\Omega)$. This yields $(f,g)\in T_{0}(X_{0}\cap X_{1})$ in view of \eqref{intersection-domains-F}.

Defining the operator $T:X_0+X_1\to Y_0+Y_1$ as in Lemma~\ref{prop-interp-Fredholm} and interpolating the Fredholm bounded operators \eqref{step4-Fredholm-negative} and \eqref{step4-Fredholm-positive}, we conclude by this lemma that the restriction of $T$ to the space $\langle X_{0},X_{1},\theta\rangle$ is a Fredholm bounded operator from this space to $\langle Y_{0},Y_{1},\theta\rangle$. Here,
\begin{align*}
\langle X_{0},X_{1},\theta\rangle&=
\langle F^{s_0}_{p,2}(A,F^{\alpha_0}_{p,2},\Omega), F^{s_1}_{p,2}(\Omega),\theta\rangle=
\langle F^{s_0}_{p,2}(A,F^{\alpha_0}_{p,2},\Omega), F^{s_1}_{p,2}(A,F^{s_1-2l}_{p,2},\Omega),\theta\rangle\\
&=F^{s}_{p,2}(A,F^{\alpha}_{p,2},\Omega)
\end{align*}
by Theorem~\ref{plus-minus-interp} in view of \eqref{mean-alpha} and \eqref{mean-s}. (Note that $F^{\alpha_0}_{p,2}(\Omega)\subset F^{s_0-2l}_{p,2}(\Omega)$ because $\alpha_{0}>s_1-2l>s_0-2l$.) Furthermore,
\begin{align*}
\langle Y_{0},Y_{1},\theta\rangle&=
\langle F^{\alpha_0}_{p,2}(\Omega),F^{s_1-2l}_{p,2}(\Omega),\theta\rangle
\times\prod_{j=1}^{l}
\langle B^{s_0-m_j-1/p}_{p,p}(\Gamma),
B^{s_1-m_j-1/p}_{p,p}(\Gamma),\theta\rangle\\
&=F^{\alpha}_{p,2}(\Omega)\times
\prod_{j=1}^{l}B^{s-m_j-1/p}_{p,p}(\Gamma),
\end{align*}
By Lemma~\ref{prop-interp-Fredholm}, the operator \eqref{Fredholm-F} has all properties indicated in Theorem~\ref{th-Fredholm} in the case under consideration.

\emph{Step~$5$.} As to the operator \eqref{Fredholm-F}, it remains to prove its Fredholm property in the case where $q\neq2$. Considering this case, we put $q_{1}:=q/2$ if $q<2$, and let $q_{1}:=2q$ if $q>2$. Hence,
\begin{equation}\label{mean-q}
\frac{1}{q}=\frac{1-\theta}{2}+\frac{\theta}{q_1}
\quad\mbox{for a unique number}\quad\theta\in(0,1).
\end{equation}
We choose a number $\alpha_{0}>m-2l+\pi(p,n)$ so that $\alpha_{0}<\alpha$ and put
\begin{equation}\label{step5-mean-alpha}
\alpha_{1}:=\frac{\alpha-(1-\theta)\alpha_{0}}{\theta}>\alpha,
\quad\mbox{with}\quad \alpha=(1-\theta)\alpha_{0}+\theta\alpha_{1}.
\end{equation}
We then let
\begin{equation}\label{step5-mean-s}
s_0:=\frac{s-\theta(\alpha_1+2l)}{1-\theta}<s,\quad\mbox{with}\quad s=(1-\theta)s_0+\theta(\alpha_1+2l)
\end{equation}
(the last inequality holds true because $\alpha_1+2l>m+\pi(p,n)\geq s$).

Consider the Fredholm bounded operators \eqref{step4-Fredholm-negative} and
\begin{equation}\label{step5-Fredholm-positive}
(A,B):F^{\alpha_1+2l}_{p,q_1}(\Omega)\to F^{\alpha_1}_{p,q_1}(\Omega)\times
\prod_{j=1}^{l}B^{\alpha_1+2l-m_j-1/p}_{p,p}(\Gamma).
\end{equation}
They have the common kernel $N$ and index $\varkappa$, with $M$ being a complement of the range of each operator. Thus, these operators denoted by $T_{0}:X_{0}\to Y_{0}$ and $T_{1}:X_{1}\to Y_{1}$ satisfy the hypotheses of Corollary~\ref{cor-interp-Fredholm}. (The dense continuous embeddings $X_{1}\hookrightarrow X_{0}$ and $Y_{1}\hookrightarrow Y_{0}$ follow from $s_{0}<\alpha_1+2l$, $\alpha_0<\alpha_1$, and Theorem~\ref{B-F-sep-dense}.) Owing to Corollary~\ref{cor-interp-Fredholm}, the restriction of the operator \eqref{step4-Fredholm-negative} to the space $\langle X_{0},X_{1},\theta\rangle$ is a Fredholm bounded operator from
\begin{align*}
\langle X_{0},X_{1},\theta\rangle&=
\langle F^{s_0}_{p,2}(A,F^{\alpha_0}_{p,2},\Omega), F^{\alpha_{1}+2l}_{p,q_1}(\Omega),\theta\rangle=
\langle F^{s_0}_{p,2}(A,F^{\alpha_0}_{p,2},\Omega), F^{\alpha_{1}+2l}_{p,q_1}(A,F^{\alpha_{1}}_{p,q_1},\Omega),\theta\rangle\\
&=F^{s}_{p,q}(A,F^{\alpha}_{p,q},\Omega)
\end{align*}
to
\begin{align*}
\langle Y_{0},Y_{1},\theta\rangle&=
\langle F^{\alpha_{0}}_{p,2}(\Omega),F^{\alpha_{1}}_{p,q_1}(\Omega),\theta\rangle
\times\prod_{j=1}^{l}
\langle B^{s_0-m_j-1/p}_{p,p}(\Gamma),
B^{\alpha_1+2l-m_j-1/p}_{p,p}(\Gamma),\theta\rangle\\
&=F^{\alpha}_{p,2}(\Omega)\times
\prod_{j=1}^{l}B^{s-m_j-1/p}_{p,p}(\Gamma).
\end{align*}
The last equalities in these formulas are due to  \eqref{mean-q}--\eqref{step5-mean-s} and Theorem~\ref{plus-minus-interp}. According to Corollary~\ref{cor-interp-Fredholm}, the operator \eqref{Fredholm-F} has all the required properties.

\emph{Step~$6$.} We will finish our proof by demonstrating the Fredholm property for the operator \eqref{Fredholm-B}. We reason similarly to Step~5 but use the real interpolation functor $\mathfrak{F}[X_0,X_1]=(X_0,X_1)_{\theta,q}$. Put, e.g., $\theta:=1/2$; choose a number $\alpha_{0}>m-2l+\pi(p,n)$ so that $\alpha_{0}<\alpha$, and define numbers $\alpha_{1}$ and $s_{0}$ by formulas \eqref{step5-mean-alpha} and \eqref{step5-mean-s}, resp. Consider the Fredholm bounded operators \eqref{step4-Fredholm-negative} and \eqref{step5-Fredholm-positive} for $q_{1}=2$, used on the previous step and denoted by $T_{0}:X_{0}\to Y_{0}$ and $T_{1}:X_{1}\to Y_{1}$. Owing to Corollary~\ref{cor-interp-Fredholm}, the restriction of the operator \eqref{step4-Fredholm-negative} to the space $(X_0,X_1)_{\theta,q}$ is a Fredholm bounded operator from
\begin{align*}
(X_0,X_1)_{\theta,q}&=
(F^{s_0}_{p,2}(A,F^{\alpha_0}_{p,2},\Omega), F^{\alpha_{1}+2l}_{p,2}(\Omega))_{\theta,q}=
(F^{s_0}_{p,2}(A,F^{\alpha_0}_{p,2},\Omega), F^{\alpha_{1}+2l}_{p,2}(A,F^{\alpha_{1}}_{p,2},\Omega))_{\theta,q}\\
&=B^{s}_{p,q}(A,B^{\alpha}_{p,q},\Omega)
\end{align*}
to
\begin{align*}
(Y_{0},Y_{1})_{\theta,q}&=
(F^{\alpha_{0}}_{p,2}(\Omega),F^{\alpha_{1}}_{p,2}(\Omega))_{\theta,q}
\times\prod_{j=1}^{l}
(B^{s_0-m_j-1/p}_{p,p}(\Gamma),
B^{\alpha_1+2l-m_j-1/p}_{p,p}(\Gamma))_{\theta,q}\\
&=B^{\alpha}_{p,q}(\Omega)\times
\prod_{j=1}^{l}B^{s-m_j-1/p}_{p,q}(\Gamma).
\end{align*}
The last equalities in these formulas hold true by Theorem~\ref{R-interp} and relations \eqref{step5-mean-alpha} and \eqref{step5-mean-s} (note that $s_{0}<\alpha_1+2l$ and $\alpha_0<\alpha_1$). According to Corollary~\ref{cor-interp-Fredholm}, the operator \eqref{Fredholm-F} has all properties stated in Theorem~\ref{th-Fredholm}.
\end{proof}

Using the real interpolation functor $(X_0,X_1)_{\theta,\infty}$, we can  deduce a version of Theorem~\ref{th-Fredholm} for the spaces
$B^{s}_{p,\infty}(\Omega)$, which involve the Nikolskii spaces. Formulating this version, we must take into account that $C^{\infty}(\overline{\Omega})$ is not dense in $B^{s}_{p,\infty}(\Omega)$. We note in this connection that the bounded operators \eqref{Fredholm-B} (or \eqref{Fredholm-F}) from Theorem~\ref{th-Fredholm} naturally set a linear mapping
\begin{equation}\label{general-map-B}
(A,B):\bigcup_{\substack{s< m+\pi(p,n),\\\alpha>m-2l+\pi(p,n)}} B^{s}_{p,p}(A,B^{\alpha}_{p,p},\Omega)\to
\mathcal{D}'(\Omega)\times(\mathcal{D}'(\Gamma))^{l}
\end{equation}
whenever $0<p<\infty$. As usual, $\mathcal{D}'(\Omega)$ and  $\mathcal{D}'(\Gamma)$ stand for the linear topological spaces of all distributions on $\Omega$ and $\Gamma$, resp. Recall that
\begin{equation*}
B^{s}_{p,q}(\Omega)\hookrightarrow B^{s}_{p,\infty}(\Omega)\hookrightarrow B^{s-\varepsilon}_{p,q}(\Omega)
\quad\mbox{if}\;\;\varepsilon>0\;\;\mbox{and}\;\;0<q<\infty
\end{equation*}
\cite[Proposition 3.2.4(i) and Theorem 3.3.1(i)]{Triebel83}.

\begin{theorem}\label{th-Fredholm-Nikolskii}
Let numbers $p$, $s$, and $\alpha$ satisfy conditions $0<p<\infty$ and \eqref{Fredholm-parameters}. Then the restriction of mapping \eqref{general-map-B} to  $B^{s}_{p,\infty}(A,B^{\alpha}_{p,\infty},\Omega)$ is a Fredholm bounded operator on the pair of spaces \eqref{Fredholm-B} where $q:=\infty$. This operator has the kernel $N$ and index $\varkappa$; its target space admits representation \eqref{direct-sum-negative-B}, with $q=\infty$ and with $M$ satisfying the assumption made in Remark~$\ref{rem-complement-space}$.
\end{theorem}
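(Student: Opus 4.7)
The plan is to repeat the argument of Step~6 of the proof of Theorem~\ref{th-Fredholm}, but with the interpolation parameter $q$ replaced by $\infty$ in the real interpolation functor. Since $C^{\infty}(\overline{\Omega})$ is not dense in $B^{s}_{p,\infty}(A,B^{\alpha}_{p,\infty},\Omega)$, the interpolated operator cannot be characterized as a continuous extension from $C^{\infty}(\overline{\Omega})$; instead, I will identify it directly as a restriction of the mapping \eqref{general-map-B}.

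Concretely, fix $\theta:=1/2$, choose a number $\alpha_{0}$ with $m-2l+\pi(p,n)<\alpha_{0}<\alpha$, and define
\begin{equation*}
\alpha_{1}:=\frac{\alpha-(1-\theta)\alpha_{0}}{\theta},\qquad
s_{0}:=\frac{s-\theta(\alpha_{1}+2l)}{1-\theta},
\end{equation*}
so that $\alpha=(1-\theta)\alpha_{0}+\theta\alpha_{1}$ and $s=(1-\theta)s_{0}+\theta(\alpha_{1}+2l)$, with $\alpha_{0}<\alpha_{1}$ and $s_{0}<s<\alpha_{1}+2l$. Consider the two Fredholm bounded operators $T_{0}$ and $T_{1}$ obtained by taking $q=q_{1}:=2$ in \eqref{step4-Fredholm-negative} and \eqref{step5-Fredholm-positive} respectively; they share the kernel $N$, the index $\varkappa$, and the common range complement $M$. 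Both are restrictions of the general mapping \eqref{general-map-B}, and they agree on their intersection, so together they define the linear operator $T$ from Lemma~\ref{prop-interp-Fredholm} (and, since $X_{1}\hookrightarrow X_{0}$, one has $T=T_{0}$). The dense continuous embeddings $X_{1}\hookrightarrow X_{0}$ and $Y_{1}\hookrightarrow Y_{0}$ required by Corollary~\ref{cor-interp-Fredholm} follow from $\alpha_{1}+2l>s_{0}$, $\alpha_{0}<\alpha_{1}$, and the density of $C^{\infty}(\overline{\Omega})$ supplied by Theorem~\ref{B-F-sep-dense}.

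Applying Corollary~\ref{cor-interp-Fredholm} with the functor $(\cdot,\cdot)_{\theta,\infty}$, I obtain a Fredholm bounded operator from $(X_{0},X_{1})_{\theta,\infty}$ to $(Y_{0},Y_{1})_{\theta,\infty}$ with kernel $N$, index $\varkappa$, and target decomposition $\mathcal{M}\dotplus T((X_{0},X_{1})_{\theta,\infty})$, where $\mathcal{M}$ is any complement of $T_{1}(X_{1})$ in $Y_{1}$ (in particular $\mathcal{M}$ may be taken equal to $M$). By Theorem~\ref{R-interp} the source interpolates to
\begin{equation*}
(F^{s_{0}}_{p,2}(A,F^{\alpha_{0}}_{p,2},\Omega),
F^{\alpha_{1}+2l}_{p,2}(A,F^{\alpha_{1}}_{p,2},\Omega))_{\theta,\infty}
=B^{s}_{p,\infty}(A,B^{\alpha}_{p,\infty},\Omega),
\end{equation*}
and the target interpolates factor-by-factor to $B^{\alpha}_{p,\infty}(\Omega)\times\prod_{j=1}^{l}B^{s-m_{j}-1/p}_{p,\infty}(\Gamma)$ by Theorem~\ref{R-interp} together with the standard real-interpolation formula for Besov spaces on $\Gamma$ (which holds for $q=\infty$ as well). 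Since $T$ is a restriction of \eqref{general-map-B}, so is its further restriction to $(X_{0},X_{1})_{\theta,\infty}$; this gives the operator stated in the theorem.

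The only real issue to watch is that Corollary~\ref{cor-interp-Fredholm} is available here: it requires the source spaces to be dual rich and the embedding $X_{1}\hookrightarrow X_{0}$ to be dense in quasi-norm, both of which are routine for Triebel--Lizorkin spaces (and were verified on Step~5 of the proof of Theorem~\ref{th-Fredholm}). Note that it is essential to use Corollary~\ref{cor-interp-Fredholm} rather than Lemma~\ref{prop-interp-Fredholm}, because for $q=\infty$ we cannot invoke density of $C^{\infty}(\overline{\Omega})$ in the interpolated source to check hypothesis~(iii) as was done in Steps~1--4. No further calculation is needed: the entire conclusion, including the direct sum decomposition \eqref{direct-sum-negative-B} with $q=\infty$, is delivered by conclusions~(a)--(d) of Lemma~\ref{prop-interp-Fredholm} as transmitted through Corollary~\ref{cor-interp-Fredholm}.
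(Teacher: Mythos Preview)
Your proof is correct and matches the paper's own argument, which simply says the proof is ``the same as Step~6 of the previous proof provided that we put $q:=\infty$.'' Your additional observation that the interpolated operator must be identified as a restriction of \eqref{general-map-B} rather than as a continuous extension from $C^{\infty}(\overline{\Omega})$ is the right way to handle the lack of density in the $q=\infty$ case.
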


\begin{proof}
It is the same as Step~6 of the previous proof provided that we put $q:=\infty$.
\end{proof}

\begin{remark}\label{adjoint-kernel-negative}
The kernels of the adjoint operators to the Fredholm bounded operators \eqref{Fredholm-B} and \eqref{Fredholm-F} from Theorems \ref{th-Fredholm}
and \ref{th-Fredholm-Nikolskii} have the dimension $-\varkappa+\dim N$ and hence equal the space $W$ indicated in Remark~\ref{adjoint-kernel}. This conclusion is true for the Fredholm operators from Theorem~\ref{th-Fredholm} as they are extensions of operators \eqref{Fredholm-B-positive} and \eqref{Fredholm-F-positive} considered for $s=\alpha+2l$ and because the relevant continuous embeddings of the source and target spaces are dense. The conclusion is also true for the Fredholm operator from Theorem~\ref{th-Fredholm-Nikolskii} as this operator is a restriction of the Fredholm operator
\begin{equation*}
(A,B):B^{s-\varepsilon}_{p,p}(A,B^{\alpha-\varepsilon}_{p,p},\Omega)\to B^{\alpha-\varepsilon}_{p,p}(\Omega)\times
\prod_{j=1}^{l}B^{s-\varepsilon-m_j-1/p}_{p,p}(\Gamma)
\end{equation*}
from Theorem~\ref{th-Fredholm} whenever $\varepsilon>0$ and  $\alpha-\varepsilon>m-2l+\pi(p,n)$ and because of the above-mentioned density of the relevant embeddings (see Theorem~\ref{B-F-sep-dense}). Thus, a vector $(f,g)$ belongs to the range of the Fredholm operator \eqref{Fredholm-B} (resp.,  \eqref{Fredholm-F}) if and only if $(f,g)$ belongs to the target space of this operator and satisfies $\omega(f,g)=0$ whenever $\omega\in W$.
\end{remark}

\begin{remark}\label{rem-Fredholm-Y}
Let $0<p<\infty$ and $0<q\leq\infty$ with $q\neq\infty$ for Triebel--Lizorkin spaces, and suppose that $s$ and $\alpha$ satisfy \eqref{Fredholm-parameters}. The Fredholm property keeps for operators \eqref{Fredholm-B} and \eqref{Fredholm-F} if we replace $B^{\alpha}_{p,q}(\Omega)$ and $F^{\alpha}_{p,q}(\Omega)$ with any quasi-Banach distribution space $Y(\Omega)$ continuously embedded in $B^{\alpha}_{p,q}(\Omega)$ or $F^{\alpha}_{p,q}(\Omega)$. Namely,
we obtain the bounded linear operators
\begin{equation}\label{Fredholm-B-Y}
(A,B):B^{s}_{p,q}(A,Y,\Omega)\to Y(\Omega)\times
\prod_{j=1}^{l}B^{s-m_j-1/p}_{p,q}(\Gamma)
\end{equation}
and
\begin{equation}\label{Fredholm-F-Y}
(A,B):F^{s}_{p,q}(A,Y,\Omega)\to Y(\Omega)\times
\prod_{j=1}^{l}B^{s-m_j-1/p}_{p,p}(\Gamma).
\end{equation}
They are Fredholm because their kernels coincide with $N$ and since their co-kernels are embedded in the finite-dimensional co-kernels of operators \eqref{Fredholm-B} and \eqref{Fredholm-F}, resp. The last property results from the evident fact that the range of operator \eqref{Fredholm-B-Y} (or \eqref{Fredholm-F-Y}) is the intersection of the following two sets: the target space of this operator and the range of  \eqref{Fredholm-B} (or \eqref{Fredholm-F}). Note that these embeddings may be strict at least in the $Y(\Omega)=\{0\}$ case, which follows from  \cite[Theorem~13.6.15]{Hermander05v2}. The Fredholm operators \eqref{Fredholm-B-Y} and \eqref{Fredholm-F-Y} in this case allow studying boundary properties of solutions to homogeneous elliptic equations (cf. \cite[Section~7]{AnopDenkMurach21CPAA1}, where the study was performed by means of inner product distribution spaces). If $C^{\infty}(\overline{\Omega})\subset Y(\Omega)$, then the set $M$ from Remark~\ref{rem-complement-space} is a complement of the range of operator \eqref{Fredholm-B-Y} (resp., \eqref{Fredholm-F-Y}) in its target space. In this case, the index of this operator equals $\varkappa$. Of course, this remark remains true in the case where $s>m+\pi(p,n)$ and when $Y(\Omega)$ is an arbitrary quasi-Banach space continuously embedded in $B^{s-2l}_{p,q}(\Omega)$ or $F^{s-2l}_{p,q}(\Omega)$.
\end{remark}

Note that the target space of the Fredholm bounded operator \eqref{Fredholm-F-Y} does not depend on $q$. This suggests the following result:

\begin{theorem}\label{source-space-F}
Let $s\in\mathbb{R}$ and $0<p<\infty$. Suppose that a quasi-Banach space $Y(\Omega)$ is continuously embedded in $F^{\alpha}_{p,q}(\Omega)$ for certain numbers
\begin{equation*}
\alpha>\max\{-l-1+\pi(p,n),s-2l\}
\end{equation*}
and $q\in(0,\infty)$. Then the space $F^{s}_{p,q}(A,Y,\Omega)$ does not depend on $q\in(0,\infty)$ up to equivalence of norms.
\end{theorem}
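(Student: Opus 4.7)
The plan is to reduce the statement to an application of the $q$-independent Fredholm framework from Remark~\ref{rem-Fredholm-Y} together with the description of the adjoint kernel in Remark~\ref{adjoint-kernel-negative}, which jointly guarantee that the target space, kernel, and range of $(A,B)$ acting on $F^{s}_{p,q}(A,Y,\Omega)$ are each independent of $q$.

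First I would augment the operator $A$ to a full elliptic boundary-value problem by taking the Dirichlet system $B:=(\partial^{0}_{\nu},\ldots,\partial^{l-1}_{\nu})$, for which $m=l-1$; the proper ellipticity of $A$ (automatic if $n\geq3$, assumed if $n=2$) ensures the Lopatinskii condition. Under this choice the Fredholm threshold $m-2l+\pi(p,n)$ becomes exactly $-l-1+\pi(p,n)$, matching the hypothesis. Using the standard embedding $F^{\alpha}_{p,q_{0}}(\Omega)\hookrightarrow F^{\alpha-\varepsilon}_{p,q'}(\Omega)$ valid for arbitrary $\varepsilon>0$ and $q'\in(0,\infty]$, and exploiting that $\alpha>\max\{-l-1+\pi(p,n),s-2l\}$ strictly, I obtain for every $q'\in(0,\infty)$ an exponent $\alpha'$ still satisfying these inequalities with $Y\hookrightarrow F^{\alpha'}_{p,q'}(\Omega)$, so that Remark~\ref{rem-Fredholm-Y} is applicable at every $q'$.

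For each $q'\in(0,\infty)$, Remark~\ref{rem-Fredholm-Y} (in either the $s\leq m+\pi(p,n)$ or $s>m+\pi(p,n)$ regime) furnishes a Fredholm bounded operator
\begin{equation*}
(A,B)_{q'}:F^{s}_{p,q'}(A,Y,\Omega)\to Y(\Omega)\times\prod_{j=1}^{l}B^{s-j+1-1/p}_{p,p}(\Gamma)=:E
\end{equation*}
whose target $E$ is manifestly independent of $q'$. By Remark~\ref{adjoint-kernel-negative}, the kernel $W$ of the adjoint consists of functionals on the fixed target of the unrestricted Fredholm operator and does not depend on $q'$; hence neither does the range $R:=\{(f,g)\in E:\omega(f,g)=0\ \mbox{for every}\ \omega\in W\}$, and the kernel $N\subset C^{\infty}(\overline{\Omega})$ is likewise $q'$-independent. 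The conclusion then follows by a direct comparison: for $0<q'_{1}\leq q'_{2}<\infty$, the continuous embedding $F^{s}_{p,q'_{1}}(\Omega)\hookrightarrow F^{s}_{p,q'_{2}}(\Omega)$ gives $F^{s}_{p,q'_{1}}(A,Y,\Omega)\hookrightarrow F^{s}_{p,q'_{2}}(A,Y,\Omega)$, and $(A,B)_{q'_{1}}$ and $(A,B)_{q'_{2}}$ agree on the smaller space. Given $u\in F^{s}_{p,q'_{2}}(A,Y,\Omega)$, the element $(A,B)_{q'_{2}}u$ lies in $R$, hence in the range of $(A,B)_{q'_{1}}$; choosing $v\in F^{s}_{p,q'_{1}}(A,Y,\Omega)$ with $(A,B)_{q'_{1}}v=(A,B)_{q'_{2}}u$, we get $u-v\in\ker(A,B)_{q'_{2}}=N\subset C^{\infty}(\overline{\Omega})\subset F^{s}_{p,q'_{1}}(A,Y,\Omega)$, whence $u\in F^{s}_{p,q'_{1}}(A,Y,\Omega)$. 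Equivalence of quasi-norms then follows from the Closed Graph Theorem for quasi-Banach spaces, both being continuously embedded in $\mathcal{D}'(\Omega)$.

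The technical heart of the argument is the verification that the operators $(A,B)_{q'_{1}}$ and $(A,B)_{q'_{2}}$ coincide on $F^{s}_{p,q'_{1}}(A,Y,\Omega)$ viewed inside $F^{s}_{p,q'_{2}}(A,Y,\Omega)$. In the regime $s\leq m+\pi(p,n)$ both are extensions by continuity of $u\mapsto(Au,Bu)$ from $C^{\infty}(\overline{\Omega})$; approximating $v$ by $v_{k}\in C^{\infty}(\overline{\Omega})$ in the stronger graph norm of $F^{s}_{p,q'_{1}}(A,F^{\alpha'}_{p,q'_{1}},\Omega)$ via Theorem~\ref{B-F-sep-dense}, transporting the same approximation into $F^{s}_{p,q'_{2}}(A,F^{\alpha'}_{p,q'_{2}},\Omega)$ through the Triebel--Lizorkin embeddings, and identifying the two limits in the common weaker target $F^{\alpha'}_{p,q'_{2}}(\Omega)\times\prod_{j}B^{s-j+1-1/p}_{p,p}(\Gamma)$ secures the agreement; in the regime $s>m+\pi(p,n)$ the operators are direct evaluations and the agreement is immediate.
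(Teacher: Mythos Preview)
Your proof is correct and follows essentially the same route as the paper's: both augment $A$ by the Dirichlet system so that $m=l-1$, observe that the target space of \eqref{Fredholm-F-Y} is $q$-independent, and then use the Fredholm framework of Remark~\ref{rem-Fredholm-Y} together with the $q$-independence of the kernel $N$ and the range to run the ``$u-v\in N$'' argument. The only cosmetic difference is that the paper first reduces to the special case $Y(\Omega)=F^{\alpha}_{p,q}(\Omega)$ and identifies the range via the complement $M$ from Remark~\ref{rem-complement-space}, whereas you work directly with general $Y$ and identify the range via the adjoint kernel $W$ from Remark~\ref{adjoint-kernel-negative}; these are dual descriptions of the same range (Proposition~\ref{prop-range}), and your extra care in checking that the operators $(A,B)_{q'_{1}}$ and $(A,B)_{q'_{2}}$ agree on the smaller domain fills in a point the paper leaves implicit.
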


\begin{proof}
Given $0<q<q_1<\infty$, we arrive at the continuous embedding
$F^{s}_{p,q}(A,Y,\Omega)\hookrightarrow F^{s}_{p,q_{1}}(A,Y,\Omega)$ due to \cite[Proposition 3.2.4(i)]{Triebel83}. It remains to prove the inverse embedding. Consider the elliptic boundary-value problem \eqref{el-PDE}, \eqref{bound-cond} in the case where $B_{j}=\partial_{\nu}^{j-1}$ for each $j\in\{1,\ldots,l\}$ (as above, $\partial_{\nu}$ is the operator of differentiation along the inward normal to~$\Gamma$); hence,  $m=l-1$, and $\alpha$ satisfies \eqref{Fredholm-parameters}. Assume first that $Y(\Omega)=F^{\alpha}_{p,q}(\Omega)$, and note that Remark~\ref{rem-Fredholm-Y} is applicable to such $Y(\Omega)$ whatever $s\in\mathbb{R}$. Choosing a distribution $u\in F^{s}_{p,q_{1}}(A,F^{\alpha}_{p,q},\Omega)$ arbitrarily, we conclude that the vector $(f,g):=(A,B)u$ belongs to the range of the Fredholm operator \eqref{Fredholm-F-Y}, for otherwise $(f,g)=(f',g')+(f'',g'')$ for a certain vector $(f',g')$ from this range and a certain nonzero vector $(f'',g'')\in M$, which would contradict the equality $(f,g)=(A,B)u$. Hence, there exists a distribution $v\in F^{s}_{p,q}(A,F^{\alpha}_{p,q},\Omega)$ such that $(A,B)v=(f,g)$. Then $(A,B)(u-v)=0$, with $u-v\in F^{s}_{p,q_{1}}(A,F^{\alpha}_{p,q},\Omega)$. This means the inclusion $w:=u-v\in N$, as mentioned in Remark~\ref{rem-Fredholm-Y}. Thus, $u=v+w\in F^{s}_{p,q}(A,F^{\alpha}_{p,q},\Omega)$, and we have proved the inverse embedding $F^{s}_{p,q_{1}}(A,F^{\alpha}_{p,q},\Omega)\subset F^{s}_{p,q}(A,F^{\alpha}_{p,q},\Omega)$. It evidently implies the  embedding $F^{s}_{p,q_{1}}(A,Y,\Omega)\subset F^{s}_{p,q}(A,Y,\Omega)$ for an arbitrary quasi-Banach space $Y(\Omega)\subset F^{\alpha}_{p,q}(\Omega)$.
\end{proof}

Thus, if a quasi-Banach space $Y(\Omega)$ satisfies the hypotheses of Theorem~\ref{source-space-F}, then operator \eqref{Fredholm-F-Y} does not depend on $q$ and hence coincides with operator \eqref{Fredholm-B-Y} considered in the $p=q$ case.

According to this theorem, the space $\{u\in F^{s}_{p,q}(\Omega):Au=0\;\mbox{in}\;\Omega\}$ does not depend on $q\in(0,\infty)$. This property is proved in \cite[Theorem~1.6]{KaltonMayborodaMitrea07} under the additional assumption that $A$ is a constant-coefficient homogeneous PDO but in the more general case where $\Omega$ is a Lipschitz bounded domain.

A special case of Theorem~\ref{source-space-F}, where
$m\leq2l-1$, $p,q\in(1,\infty)$, and $\alpha>\max\{-1+1/p,s-2l\}$, is contained in \cite[Theorem~6]{ChepurukhinaMurach21Dop6} (given without a proof).

Ending this section, we give an application of Theorem~\ref{th-Fredholm-Nikolskii} to certain elliptic problems with rough boundary data induced by a Gaussian white noise. We previously recall some relevant notions. Let $(\Theta,\mathcal A,\mathbb P)$ be a probability space.

A generalized random variable $\xi:\Theta\to\mathcal{D}'(\Gamma)$ is said to be a Gaussian white noise on $\Gamma$ if $\xi$ satisfies the following two conditions:
\begin{itemize}
\item[($\star$)] the numerical random variable $\xi(v):\Theta\to\mathbb{C}$ is Gaussian for every function $v\in C^{\infty}(\Gamma)$;
\item[($\star\star$)] there exists a number $C>0$ such that
\begin{equation*}
\mathbb{E}[\xi(v_1)\overline{\xi(v_2)}]=C(v_1,v_2)_{\Gamma}
\quad\mbox{for arbitrary}\quad v_1,v_2\in C^{\infty}(\Gamma)
\end{equation*}
\end{itemize}
(see, e.g., \cite[Section~3]{Veraar11}). Here, as usual,  $\mathbb{E}$ is the expectation with respect to $\mathbb P$, and $\xi(v)$ denotes the value of $\xi$ at the test function $v\in C^{\infty}(\Gamma)$.

The regularity of white noise on the scale of Besov spaces (specifically, Nikolskii spaces) has recently been studied (see, e.g., \cite{FageotFallahUnser17, Veraar11}). For Gaussian white noise $\xi$ on the torus $\Gamma=\mathbb{T}^d$ of dimension $d\geq1$, it was shown  \cite[Theorem~3.4~(1)]{Veraar11} that
\begin{equation}\label{white-noise-in-B}
\mathbb{P}(\xi\in B^{-d/2}_{p,\infty}(\mathbb{T}^d))=1
\quad\mbox{for each}\;\;p\in[1,\infty).
\end{equation}
The greater $p$ is in this formula, the sharper result will be. We cannot replace the Nikolskii space in \eqref{white-noise-in-B} with a narrower Besov space $B^{-d/2}_{p,q}(\mathbb{T}^d)$, where  $p,q\in[1,\infty)$ or $p=q=\infty$, because $\mathbb{P}\{\xi\in B^{-d/2}_{p,q}(\mathbb{T}^d)\}=0$ for this  space \cite[Theorem~3.4 (4),(6)]{Veraar11}. Hence, $\mathbb{P}\{\xi\in B^{\sigma}_{p,\infty}(\mathbb{T}^d)\}=0$ whenever $\sigma>-d/2$. Thus, the values $\sigma=-d/2$ and $q=\infty$ are exact in formula \eqref{white-noise-in-B}, with the condition $p<\infty$ being essential. It is natural to assume that this formula will remain correct if we replace $\mathbb{T}^d$ with an arbitrary closed compact $C^{\infty}$-manifold $\Gamma$ of dimension $d\geq1$, but this has probably not been proved yet.

Assume that $\Omega$ is bounded by the torus $\mathbb{T}^{d}$, with $n=d+1\geq2$, and consider the Dirichlet boundary-value problem for the Poisson equation
\begin{equation}\label{Poisson-Dirichlet-noise}
\Delta u=f\quad\mbox{in}\;\,\Omega,\qquad u=\xi\quad\mbox{on}\;\,\mathbb{T}^{d}.
\end{equation}
Here, $f\in B^{\alpha}_{p,\infty}(\Omega)$ for certain $p\in[1,\infty)$ and $\alpha>-2+1/p$, and $\xi$ is a Gaussian white noise on $\mathbb{T}^{d}$. This problem is elliptic in $\Omega$, with $N=\{0\}$ and $M=\{0\}$ for it.

\begin{theorem}\label{th-white-noise}
For $\mathbb{P}$-almost all $\omega\in\Theta$, there exists a unique solution $u(\omega,\cdot)$ to \eqref{Poisson-Dirichlet-noise} that belongs to $B^{1/p-d/2}_{p,\infty}(\Omega)$. This solution satisfies the estimate
\begin{equation}\label{bound-white-noise}
\|u(\omega,\cdot),B^{1/p-d/2}_{p,\infty}(\Omega)\|\leq c\bigl(\|f,B^{\alpha}_{p,\infty}(\Omega)\|+
\|\xi(\omega),B^{-d/2}_{p,\infty}(\mathbb{T}^{d})\|\bigr),
\end{equation}
with the number $c=c(d,p,\alpha)>0$ being independent of $f$, $\xi$, and $\omega$.
\end{theorem}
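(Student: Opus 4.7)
The plan is to realize problem \eqref{Poisson-Dirichlet-noise} as a special case of the abstract elliptic problem \eqref{el-PDE}--\eqref{bound-cond} and then apply Theorem~\ref{th-Fredholm-Nikolskii}. Here $A:=\Delta$ is properly elliptic of order $2l=2$ in $\overline{\Omega}$, the single boundary operator is $B_{1}:=I$ of order $m_{1}=0$, so $m=0$. Since $p\in[1,\infty)$, formula \eqref{trace-th-parameter} gives $\pi(p,n)=1/p$; hence the required conditions \eqref{Fredholm-parameters} read $s\leq 1/p$ and $\alpha>-2+1/p$. Choosing the endpoint $s:=1/p-d/2$ makes the boundary-space index equal to $s-m_{1}-1/p=-d/2$, which is precisely the Besov regularity provided by \eqref{white-noise-in-B}. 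The hypothesis $\alpha>-2+1/p$ is given, and $s\leq 1/p$ holds trivially since $d\geq 1$. For the Dirichlet problem with $A=\Delta$, the common kernel $N$ is trivial (the unique-solvability of the classical Dirichlet problem for harmonic functions), and consequently the index $\varkappa=0$, so the complement space $M$ from Remark~\ref{rem-complement-space} can be taken to be $\{0\}$.

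Applying Theorem~\ref{th-Fredholm-Nikolskii} with these parameters yields the bounded Fredholm operator
\begin{equation*}
(\Delta,I):B^{1/p-d/2}_{p,\infty}(\Delta,B^{\alpha}_{p,\infty},\Omega)\to
B^{\alpha}_{p,\infty}(\Omega)\times B^{-d/2}_{p,\infty}(\mathbb{T}^{d})
\end{equation*}
with kernel $N=\{0\}$ and with target space that decomposes as $M\dotplus\mathrm{range}=\{0\}\dotplus\mathrm{range}$; that is, this operator is a topological isomorphism. By the open mapping theorem (applicable to quasi-Banach spaces, as cited in Section~\ref{sec4}) the inverse is bounded, so there is a constant $c>0$ such that
\begin{equation*}
\|u\|_{B^{1/p-d/2}_{p,\infty}(\Omega)}\leq
\|u\|_{B^{1/p-d/2}_{p,\infty}(\Delta,B^{\alpha}_{p,\infty},\Omega)}\leq
c\bigl(\|f\|_{B^{\alpha}_{p,\infty}(\Omega)}+
\|g\|_{B^{-d/2}_{p,\infty}(\mathbb{T}^{d})}\bigr)
\end{equation*}
for every pair $(f,g)$ in the target.

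To finish, I invoke \eqref{white-noise-in-B}: there exists an event $\Theta_{0}\subset\Theta$ with $\mathbb{P}(\Theta_{0})=1$ on which $\xi(\omega)\in B^{-d/2}_{p,\infty}(\mathbb{T}^{d})$. For $\omega\in\Theta_{0}$, define $u(\omega,\cdot)$ as the preimage of $(f,\xi(\omega))$ under the above isomorphism; then $u(\omega,\cdot)\in B^{1/p-d/2}_{p,\infty}(\Delta,B^{\alpha}_{p,\infty},\Omega)$, solves \eqref{Poisson-Dirichlet-noise} (the boundary trace being understood via the extended mapping of Theorem~\ref{th-Fredholm-Nikolskii}), and satisfies \eqref{bound-white-noise} with $c$ independent of $f$, $\xi$, $\omega$. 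Uniqueness within $B^{1/p-d/2}_{p,\infty}(\Omega)$ is automatic: any such solution automatically lies in the graph space $B^{1/p-d/2}_{p,\infty}(\Delta,B^{\alpha}_{p,\infty},\Omega)$ because $\Delta u=f\in B^{\alpha}_{p,\infty}(\Omega)$, and two solutions differ by an element of $\ker(\Delta,I)=N=\{0\}$. I expect no serious obstacle here; the only delicate point is verifying that the chosen $s=1/p-d/2$ indeed fits the restriction $s\leq m+\pi(p,n)$ of Theorem~\ref{th-Fredholm-Nikolskii} (it does, strictly, since $d\geq 1$) and that $M=\{0\}$ for the Dirichlet Laplacian, which is standard.
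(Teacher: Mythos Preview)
Your argument is correct and follows exactly the route the paper takes: the paper states only that Theorem~\ref{th-white-noise} is ``a consequence of Theorem~\ref{th-Fredholm-Nikolskii} and property~\eqref{white-noise-in-B}'', and you have spelled out precisely this reduction, including the parameter choice $s=1/p-d/2$, the verification of \eqref{Fredholm-parameters}, and the use of $N=\{0\}$, $M=\{0\}$ (which the paper records immediately before the statement of the theorem).
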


This is a consequence of Theorem~\ref{th-Fredholm-Nikolskii} and property \eqref{white-noise-in-B}. Here, of course, $\|\cdot,X\|$ denotes the quasi-norm in a space $X$.

This result is formulated in \cite[Theorem~4]{MurachChepurukhina21Dop3} under the stronger assumption that $f\in L_{p}(\Omega)$. It follows from the above that the result is exact with respect to the smoothness index of the Nikolskii space over $\Omega$. The use of the Besov spaces $B^{\sigma}_{p,q}(\Omega)$ with $q<\infty$ does not allow us to attain the limit value $\sigma=1/p-d/2$.
We only assert that $\mathbb{P}(u\in B^{\sigma}_{p,q}(\Omega))=1$ whenever $\sigma<1/p-d/2$.

\section{Historical comments on some results}\label{sec5}

Theorem~\ref{th-Fredholm} has a background in classical results by Lions and Magenes on the solvability of elliptic boundary-value problems in Sobolev and Besov normed spaces of low order \cite{LionsMagenes61II, LionsMagenes61III, LionsMagenes62V, LionsMagenes63VI}. Their paper \cite[Theorems 7.1, 8.1, and 12.1]{LionsMagenes61II} deals with inner product Sobolev spaces (the $p=q=2$ case), studies the Dirichlet, Neumann, and oblique derivative elliptic problems, and proves versions of Theorem~\ref{th-Fredholm} for $s=0$ and $\alpha=-l$ (for the Dirichlet problem) or $\alpha=0$ (for other problems under study). The intermediate case where $0<s<l$ or $0<s<2l$, resp., is investigated by the quadratic interpolation between Hilbert spaces, however the domain of the problem operator is not identified with $H^{s}_{2}(A,H^{\alpha}_{2},\Omega)$ \cite[Theorems 10.1, 11.1, and 12.2]{LionsMagenes61II}. The next paper \cite[Theorem~12.1]{LionsMagenes61III} proves a version of Theorem~\ref{th-Fredholm} for the Dirichlet elliptic problem and parameters $0\leq s\leq2l$ and $\alpha=-l$ in the following two cases:
\begin{itemize}
  \item[($\ast$)] for operator \eqref{Fredholm-B} under assumption that $1<p=q<\infty$ and $s\not\in\mathbb{Z}$, with  $s-1/p\not\in\mathbb{Z}$ if $p\neq2$;
  \item[($\ast\ast$)] for operator \eqref{Fredholm-F} under assumption that $1<p<\infty$, $q=2$, and $s\in\mathbb{Z}$
\end{itemize}
(these operators are isomorphisms for this problem). The other papers \cite{LionsMagenes62V, LionsMagenes63VI} studies the general elliptic problem \eqref{el-PDE}, \eqref{bound-cond} in the case where the system $B$ of boundary PDOs is normal (the so-called regular elliptic problem). Versions of Theorem~\ref{th-Fredholm} are proved for such a problem and $\alpha=0$ in the above cases provided that $0\leq s\leq2l$, $N=\{0\}$, and $\varkappa=0$ \cite[Theorem~5.4]{LionsMagenes62V} or that $s\leq0$ \cite[Theorem~8.1]{LionsMagenes63VI}. Thus, Theorems \ref{th-Fredholm} and \ref{th-Fredholm-Nikolskii} are considered to be of the Lions--Magenes type.

Regarding the above-mentioned results, we note Magenes' survey \cite[Chapter~II]{Magenes65}, which gives applications of interpolation spaces to the Dirichlet elliptic problem, specifically in Sobolev and Besov spaces of low order. Versions of Theorem~\ref{th-Fredholm} are contained in \cite[Theorems 6.14, 6.16, 6.19, 6.20, and 6.25]{Magenes65} for such a problem provided that $s\leq2l$ and $\alpha=0$ and that $1<p=q<\infty$ for operator \eqref{Fredholm-B} or $1<p<\infty$ and $q=2$ for operator \eqref{Fredholm-F}, with $s-1/p\not\in\mathbb{Z}$ for both operators. We also mention Geymonat's paper \cite[Theorem 4.1]{Geymonat62}, which improves \cite[Theorem~12.1]{LionsMagenes61III} by using a broader source space for the operator $(A,B)$ in the Dirichlet problem case.

The indicated results are obtained by the transposition (i.e. the transition to the adjoint of the operator $(A,B)$ acting between distribution spaces of sufficiently large order) and then by the interpolation of relevant operators. The fundamental monograph by Lions and Magenes \cite[Chapter~2, Sections 6 and~7]{LionsMagenes72} expounds this method in the case of inner product Sobolev spaces, with weighted spaces of negative order being used for the right-hand sides of the elliptic equation \eqref{el-PDE}.

Another approach is based on theorems on solvability of elliptic boundary-value problems in Roitberg--Sobolev spaces \cite{Roitberg64, Roitberg65, Roitberg96}. It is proposed by Roitberg \cite[Theorem~4]{Roitberg68} and is developed in \cite[Theorem~4]{KostarchukRoitberg73}, \cite{Murach09MFAT2} for Sobolev spaces of low order (see also \cite[Section~7.9~c]{Agranovich94} and \cite[Section~4.4]{MikhailetsMurach14}). This approach allows obtaining a version of Theorem~\ref{th-Fredholm} for operator \eqref{Fredholm-F} under assumption that $m\leq2l-1$, $1<p<\infty$, $q=2$, $s<2l$, and $\alpha>-1+1/p$. It can be extended to Besov and Triebel--Lizorkin spaces whose order and exponents satisfy $s\geq0$, $1<p<\infty$ and $1<q<\infty$, as stated in \cite[Theorem~2 and Remark~1]{Murach94Dop12} without proofs, the assumption $\alpha>\max\{s-2l,m-2l+1/p,-1+1/p\}$ ensuring the Fredholm property for operators \eqref{Fredholm-B} and \eqref{Fredholm-F}.

If $m\leq2l-1$, $1<p<\infty$, $1<q<\infty$, $s<2l-1+1/p$, and $\alpha>-1+1/p$, then Theorem~\ref{th-Fredholm} is contained in \cite[Theorems 1 and~2]{ChepurukhinaMurach21Dop6} provided that $s\neq-k+1/p$ whenever $1\leq k\in\mathbb{Z}$. The paper \cite[Theorems 3 and~4]{ChepurukhinaMurach21Dop6} also involves weighted Besov--Triebel--Lizorkin spaces of negative order as the spaces of right-hand sides of the elliptic equation (in spirit of \cite[Chapter~2, Sections 6 and~7]{LionsMagenes72} and \cite[Section~4.4.4]{MikhailetsMurach14}). The indicated results are given without proofs in~\cite{ChepurukhinaMurach21Dop6}.

Thus, Theorem \ref{th-Fredholm} is new at least in the case where $0<p\leq1$ or/and $0<q\leq1$ and hence involves quasi-Banach spaces of low order in similar results for the first time. If $m\geq2l$, then this theorem is new for all $p,q\in(0,\infty)$ in the case where $s<0$ (the $0\leq s\leq m+1/p$ case is covered by \cite[Theorem~2 and Remark~1]{Murach94Dop12} provided that $p,q\in(1,\infty)$) excepting the $p=q=2$ case involved in \cite[Theorem~3]{KasirenkoMikhailetsMurach19}). If $m\leq2l-2$, then Theorem~\ref{th-Fredholm} is new even when $p,q\in(1,\infty)$ (specifically, in the $p=q=2$ case of inner product Sobolev spaces) because it allows $\alpha>m-2l+1/p$ instead of the stronger assumption $\alpha>-1+1/p$ made in \cite[Theorems 1 and~2]{ChepurukhinaMurach21Dop6}. (A similar diminution of the value of $\alpha$ was earlier attained only for the Dirichlet elliptic problem provided that $0\leq s\leq l$; see, e.g., the above-mentioned paper \cite[Theorem~12.1]{LionsMagenes61III}, when the value $\alpha=-l$ was taken instead of $\alpha=0$ for other boundary conditions.)

Theorem~\ref{th-Fredholm-Nikolskii} is new excepting the case where $m\leq2l-1$, $1<p<\infty$, and $\alpha=0$. In this case,
a version of this theorem was formulated in \cite[Theorem~1]{MurachChepurukhina21Dop3} (without a proof, only its method was indicated).

Note that versions of Theorem \ref{th-Fredholm} were proved in \cite[Theorem~4.2]{AnopDenkMurach21CPAA1}, \cite[Theorem~1]{AnopKasirenkoMurach18UMJ3}, and \cite[Theorem~3.1.1]{MikhailetsMurach14} for inner product Sobolev spaces of generalized low smoothness (see also surveys \cite[Theorem~6.2]{MikhailetsMurach09OperatorTheory191} and \cite[Theorem~12.7]{MikhailetsMurach12BJMA2}). These versions deal with the $p=q=2$ case and are contained in Theorem~\ref{th-Fredholm} for  Sobolev spaces. The important case of homogeneous elliptic equation \eqref{el-PDE} was studied in \cite{Anop19Dop2, AnopMurach18Dop3, MikhailetsMurach06UMJ11, MurachChepurukhina20Dop8}, \cite[Section~7]{AnopDenkMurach21CPAA1}, and \cite[Section~3.3]{MikhailetsMurach14} for such spaces of generalized smoothness and various boundary conditions
(see also \cite[Section~4.2]{MikhailetsMurach09OperatorTheory191} and \cite[Section~10.1]{MikhailetsMurach12BJMA2}).

It is worthwhile to compare Theorems \ref{th-Fredholm} and \ref{th-Fredholm-Nikolskii} with Hummel's recent result \cite[Theorem~6.3]{Hummel21JEE} on conditions for unique solvability of some constant-coefficient parameter-elliptic problems in certain normed anisotropic distribution spaces over the half-space. These spaces are built on the base of Besov or Triebel--Lizorkin spaces of an arbitrary real order, and the right-hand side of the elliptic equation is assumed to have a nonnegative integer-valued Sobolev regularity in the normal direction. Unlike Theorems \ref{th-Fredholm} and \ref{th-Fredholm-Nikolskii}, this result does not allow deriving the maximal regularity of solutions from the obtained conditions \cite[p.~1949]{Hummel21JEE} and applies only to Banach spaces.
The conditions for the maximal regularity of solutions to parameter-elliptic problems in $\Omega$ is obtained in the newest paper \cite[Theorem~4.9]{DenkPlossRauSeiler23} by Denk, Plo{\ss}, Rau, and  Seiler for the solution space $H^{s+\sigma}_{p}(A,H^{\alpha}_{p},\Omega)$ provided that $1<p<\infty$, $s>m+1/p$, $-1<\sigma\leq0$, $-1+1/p<\alpha\leq s+\sigma$, and $\alpha>s-2m$, the Euclidean domain $\Omega$ being assumed to be bounded or exterior with sufficiently smooth compact boundary.

Interpolation formulas \eqref{R-interp-special-formula} and \eqref{C-interp-formula} are known in some very special cases involving only normed Sobolev and Besov spaces. Formula~\eqref{R-interp-special-formula} is proved in \cite[Propositions 5.5 and~5.6]{LionsMagenes62V} for the case where $E=V=F$, $1<p=\beta=q<\infty$, $q_0=q_1=\gamma=2$, $\lambda\in\{-l,0\}$, $s_0\in\mathbb{Z}$, $1\leq s_0\leq2l+\lambda$, $s_1=s_0-1$, and $1-\theta\neq1/p$. Formula \eqref{C-interp-formula} is proved in
\cite[Propositions 5.3 and~5.4]{LionsMagenes62V} for the case where $E=F$, $1<p_0=p_1=\beta_0=\beta_1<\infty$, $q_0=q_1=\gamma_0=\gamma_1=2$, $\alpha_0=\alpha_1\in\{-l,0\}$, $s_0=2l+\alpha_0$, $s_1=0$, and $\theta s_0\in\mathbb{Z}$. These results were deduced from a theorem on isomorphisms set by the Dirichlet elliptic boundary-value problem on relevant pairs of Banach distribution spaces (see \cite[Theorem~12.1]{LionsMagenes61III}). In the case of Hilbert spaces (where all subscripts equal $2$), formula \eqref{C-interp-formula} is proved in \cite[Theorem~2]{KasirenkoMikhailetsMurach19} on the base of Theorem~\ref{th-basic}, established in Lions and Magenes' monograph \cite[Chapter~1, Section~14.3]{LionsMagenes72} for the complex method of interpolation. This monograph \cite[Chapter~2, Theorem~7.2]{LionsMagenes72} gives an example of general interpolation formula \eqref{interp-XY-XY} in the case where $X_0=H^{2l}_{2}(\Omega)$, $Y_0=X_1=H^{0}_{2}(\Omega)$, and $Y_1=\{\varrho^{2l}f:f\in H^{-2l}_{2}(\Omega)\}$, with the weight function $\varrho\in C^{\infty}(\overline{\Omega})$ satisfying $\varrho>0$ in $\Omega$ and
$\varrho(x)=\mathrm{dist}(x,\Gamma)$ in a neighbourhood of $\Gamma$. This formula is used to prove a classical result \cite[Chapter~2, Theorem~7.4]{LionsMagenes72} on isomorphisms induced by a regular elliptic boundary-value problem and involving weighted inner product Sobolev spaces of negative order. These specific interpolation formulas were obtained in the above-mentioned papers for case when $\partial\Omega$ is of class~$C^{\infty}$.

Of interest is a margin version of interpolation formula \eqref{interp-XY-XY} where $Y_0=Y_1=\{0\}$. This version is not covered by Theorem~\ref{main-interp-th} (because the null space is not admissible) and deals with spaces formed by solutions to the homogeneous elliptic equation $Au=0$ in $\Omega$. The version is proved by Kalton, Mayboroda, and Mitrea \cite[Theorem~1.5]{KaltonMayborodaMitrea07} for the real and complex interpolation methods in the case where all coefficients of $A$ are constant. The case of variable coefficients can be treated with the help of the Fredholm operators \eqref{Fredholm-B-Y} and \eqref{Fredholm-F-Y} considered for the Dirichlet problem and $Y(\Omega)=\{0\}$ on the assumption that $\Gamma$ and all coefficients of $A$ are of class $C^{\infty}$. This was done by Anop, Denk, and Murach \cite[Theorem~7.8]{AnopDenkMurach21CPAA1} for the method of the quadratic interpolation with function parameter between Hilbert spaces.

A part of Theorem~\ref{B-F-sep-dense} concerning the density of $C^{\infty}(\overline{\Omega})$ in the space $E^{s}_{p,q}(A,G^{\alpha}_{\beta,\gamma},\Omega)$, where $E,G\in\{B,F\}$, is crucial for theorems of the Lions--Magenes type. Indeed, the bounded operators \eqref{Fredholm-B} and \eqref{Fredholm-F} are defined to be  extensions by continuity of the mapping $u\mapsto(Au,Bu)$ where $u$ ranges over the set $C^{\infty}(\overline{\Omega})$. The density of this set in the Hilbert space $H^{0}_{2}(A,H^{0}_{2},\Omega)$ (see \cite[Remark]{Hermander58}) plays an important role in the spectral theory of elliptic operators. The density of $C^{\infty}(\overline{\Omega})$ in the space $H^{0}_{p}(A,H^{-l}_{p},\Omega)$ used for the Dirichlet boundary-value problem was proved in \cite[Lemma~2.2]{LionsMagenes61II} for $p=2$ and in
\cite[Proposition~9.2]{LionsMagenes61III} for every $p\in(1,\infty)$. This density in $H^{s}_{p}(A,H^{0}_{p},\Omega)$ was established in \cite[Proposition~3.1]{LionsMagenes62V} if $0\leq s\leq 2l$ and in
\cite[Lemma~7.1]{LionsMagenes63VI} if $s<0$ and $s-1/p\not\in\mathbb{Z}$, provided that $p\in(1,\infty)$. Theorem~\ref{B-F-sep-dense} in the $p=q=2$ case of Hilbert spaces was proved in \cite[Theorem~1, (i) and (iii)]{KasirenkoMikhailetsMurach19}. These results were obtained in the case where $\partial\Omega$ is of class $C^{\infty}$ (excepting those on $H^{0}_{2}(A,H^{0}_{2},\Omega)$).

\section{Appendix A}\label{sec6}

Let us discuss the structure of the spaces $M$ and $W$ indicated in Remarks \ref{rem-complement-space}, \ref{adjoint-kernel}, and  \ref{adjoint-kernel-negative}.

Suppose first that $m\leq 2l-1$. Then the kernel of a certain elliptic boundary-value problem can be taken as the space $M$ satisfying the assumption made in Remark~\ref{rem-complement-space} and can be interpreted as the space $W$. This problem is adjoint to problem \eqref{el-PDE}, \eqref{bound-cond} with respect to the following Green formula (see, e.g., \cite[Theorem~3.1.1]{KozlovMazyaRossmann97}):
\begin{equation*}
(Au,w)_{\Omega}+\sum_{j=1}^{l}(B_{j}\,u,h_{j})_{\Gamma}=
(u,A^{+}w)_{\Omega}+\sum_{k=1}^{2l}\biggl(D_{\nu}^{k-1}u,K_{k}\,w+
\sum_{j=1}^{l}Q_{j,k}^{+}\,h_{j}\biggr)_{\Gamma},
\end{equation*}
where the functions $u,w\in C^{\infty}(\overline{\Omega})$ and  $h_{1},\ldots,h_{l}\in C^{\infty}(\Gamma)$ are arbitrary, whereas $(\cdot,\cdot)_{\Omega}$ and $(\cdot,\cdot)_{\Gamma}$ denote the inner products in $L_{2}(\Omega)$ and $L_{2}(\Gamma)$, resp., and also denote their extensions by continuity. (As usual, $L_{2}(\Omega)$ and $L_{2}(\Gamma)$ stand for  the Hilbert spaces  of functions square integrable over $\Omega$ and $\Gamma$, resp.) Here, $A^{+}$ is the formally adjoint PDO to $A$ with respect to $(\cdot,\cdot)_{\Omega}$.
All boundary linear PDOs $K_{k}:=K_{k}(x,D)$ are uniquely determined by the following formula:
\begin{equation*}
(Au,w)_{\Omega}=(u,A^{+}w)_{\Omega}+
\sum_{k=1}^{2l}(D_{\nu}^{k-1}u,K_{k}\,w)_{\Gamma}
\end{equation*}
whenever $u,w\in C^{\infty}(\overline{\Omega})$, with $\mathrm{ord}\,K_{k}\leq2l-k$ and $D_{\nu}:=i\partial_{\nu}$ (recall that $\partial_{\nu}$ is the operator of differentiation along the inward normal to $\Gamma$). Finally, each $Q_{j,k}^{+}$ is the formally adjoint tangent PDO to $Q_{j,k}$ with respect to $(\cdot,\cdot)_{\Gamma}$, whereas the tangent linear PDO $Q_{j,k}$ is taken from the representation of the boundary PDO $B_{j}$ in the form
\begin{equation}\label{bound-PDO-representation}
B_{j}(x,D)=
\sum_{k=1}^{2l}Q_{j,k}(x,D_{\tau})D_{\nu}^{k-1}
\end{equation}
(of course, $Q_{j,k}=0=Q_{j,k}^{+}$ if $k\geq m_{j}+2$). The coefficients of the indicated PDOs are infinitely smooth. Thus, the homogeneous adjoint problem takes the form
\begin{gather}\label{adjoint-PDE}
A^{+}w=0\quad\mbox{in}\quad\Omega,\\
K_{k}\,w+\sum_{j=1}^{l}Q_{j,k}^{+}h_{j}=0\quad
\mbox{on}\quad\Gamma,\quad k=1,...,2l.
\label{adjoint-bound-cond}
\end{gather}
This problem contains $l$ additional unknown functions $h_{1},\ldots,h_{l}$ on the boundary $\Gamma$ and is elliptic  \cite[Theorem~3.1.2]{KozlovMazyaRossmann97}. Remark that such a Green formula and adjoint boundary-value problem were introduced by {\L}awruk \cite[Section~4]{Lawruk63a}.

The space
\begin{equation}\label{M-description}
M:=\{(w,h_{1},\ldots,h_{l})\in C^{\infty}(\overline{\Omega})\times (C^{\infty}(\Gamma))^{l}:\mbox{\eqref{adjoint-PDE} and \eqref{adjoint-bound-cond} hold true}\}
\end{equation}
satisfies the assumption made in Remark~\ref{rem-complement-space} and
is hence relevant in Theorems \ref{th-Fredholm} and~\ref{th-Fredholm-Nikolskii}. Moreover, $W=M$ in the natural sense that $W$ consists of the continuous linear functionals $(f,w)_{\Omega}+(g_{1},h_{1})_{\Gamma}+\cdots+
(g_{l},h_{l})_{\Gamma}$ induced by arbitrary vectors $(w,h_{1},\ldots,h_{l})\in M$ and defined on the target space of any operator among \eqref{Fredholm-B-positive}, \eqref{Fredholm-F-positive},
\eqref{Fredholm-B}, and \eqref{Fredholm-F}, with $(f,g_{1},\ldots,g_{l})$ running through this target space. (Recall that the $q=\infty$ case is admissible for \eqref{Fredholm-B} due to Theorem~\ref{th-Fredholm-Nikolskii}.) This follows directly from the fact that $M$ is the kernel of the adjoint operator to \eqref{Fredholm-F-positive} in the case where $s=2l$ and $p=q=2$ (see, e.g., \cite[Theorem~3.4.2]{KozlovMazyaRossmann97}). Hence, the vector $(f,g)$ belongs to the range of any Fredholm operator among \eqref{Fredholm-B-positive}, \eqref{Fredholm-F-positive}, \eqref{Fredholm-B}, and \eqref{Fredholm-F} if and only if $(f,g)$ belongs to the target space of this operator and satisfies the following orthogonality condition:
\begin{equation*}
(f,w)_{\Omega}+\sum_{j=1}^{l}(g_{j},h_{j})_{\Gamma}=0
\quad\mbox{for every}\quad (w,h_{1},\ldots,h_{l})\in M.
\end{equation*}

In this regard, we note the following: if $(w,h_{1},\ldots,h_{l})\in M$, then $D_{\nu}^{k-1}w=0$ on $\Gamma$ whenever $1\leq k<2l-m$ (cf. \cite[Corollary~1.4]{Johnsen96}). Indeed, since the space $M=W$ lies, specifically, in the dual of
\begin{equation*}
H^{s-2l}_{2}(\Omega)\times\prod_{j=1}^{l}H^{s-m_j-1/2}_{2}(\Gamma)
\end{equation*}
for $s=m+1$, the first component of any vector $(w,h_{1},\ldots,h_{l})\in M$ belongs to the closure of $C^{\infty}_{0}(\Omega)$ in the space $H^{2l-m-1}_{2}(\Omega)$ \cite[Theorems 4.3.2/1(c) and 4.8.2(c)]{Triebel95}, which implies the required property of $w$. (As usual, $C^{\infty}_{0}(\Omega)$ is the space of all functions $u\in C^{\infty}(\overline{\Omega})$ such that $\mathrm{supp}\,u\subset\Omega$.) Note also that $(g_{j},h_{j})_{\Gamma}$ coincides with the value of the distribution $g_{j}$ at the test function $h_{j}$ (provided that we interpret distributions as antilinear functionals on a relevant space of test functions).

It is worthwhile to mention the frequently encountered case where the system $B$ of boundary PDOs is normal \cite[Chapter~2, Definition~1.4]{LionsMagenes72}. Then the boundary-value problem \eqref{el-PDE}, \eqref{bound-cond} also satisfies the classical Green formula
\begin{equation*}
(Au,w)_{\Omega} + \sum_{j=1}^{l}(B_{j}u,C_{l+j}w)_{\Gamma} =
(u,A^{+}w)_{\Omega} + \sum_{j=1}^{l}(B_{l+j}u,C_{j}w)_{\Gamma}
\end{equation*}
for arbitrary $u,w\in C^{\infty}(\overline{\Omega})$. Here,
$B_{l+j}$, $C_{j}$, and $C_{l+j}$ are certain boundary linear PDOs (with infinitely smooth coefficients on $\Gamma$) that form the Dirichlet systems $\{B_{1},\ldots,B_{2l}\}$ and $\{C_{1},\ldots,C_{2l}\}$ of order $2l$ on $\Gamma$; the second system is uniquely determined by the first one for any choice of $\{B_{l+1},\ldots,B_{2l}\}$ that complements $\{B_{1},\ldots,B_{l}\}$ to the the Dirichlet system (see, e.g., \cite[Section~3.1.4]{KozlovMazyaRossmann97} or \cite[Chapter~2, Theorem~2.1]{LionsMagenes72}). This formula leads to the homogeneous elliptic adjoint boundary-value problem
\begin{equation}\label{adjoint-problem-classical}
A^{+}w=0\quad\mbox{in}\quad\Omega,\qquad
C_{j}w=0\quad\mbox{on}\quad\Gamma,
\quad j=1,\ldots l.
\end{equation}
Then the right-hand side of \eqref{M-description} becomes
\begin{equation*}
M=\{(w,C_{l+1}w,\ldots,C_{2l}w):w\in
C^{\infty}(\overline{\Omega})\;\mbox{satisfies
\eqref{adjoint-problem-classical}}\}
\end{equation*}
\cite[Lemma~3.1.1]{KozlovMazyaRossmann97}. An important example of a normal system of boundary PDOs is given by    $B_{j}:=D_{\nu}^{\lambda+j-1}+\mbox{\{lower order terms\}}$, where $j=1,\ldots,l$; here the integer $\lambda\in\{0,\ldots,l\}$ is arbitrarily chosen. This system satisfies the Lopatinskii condition with respect to any properly elliptic PDO $A$ of order $2l$.

Suppose now that $m\geq 2l$. Then the following modified Green formula holds true for the boundary-value problem \eqref{el-PDE}, \eqref{bound-cond}:
\begin{align*}
&(Au,w)_{\Omega}+\sum_{j=1}^{m+1-2l}(D_{\nu}^{j-1}Au,w_{j})_{\Gamma}+
\sum_{j=1}^{l}(B_{j}\,u,h_{j})_{\Gamma}\\
&=(u,A^{+}w)_{\Omega}+
\sum_{k=1}^{m+1}\biggl(D_{\nu}^{k-1}u,K_{k}w+
\sum_{j=1}^{m+1-2l}R_{j,k}^{+}w_{j}+
\sum_{j=1}^{l}Q_{j,k}^{+}h_{j}\biggr)_{\Gamma},
\end{align*}
where the functions $u,w\in C^{\infty}(\overline{\Omega})$ and  $w_{1},\ldots,w_{m+1-2l},h_{1},\ldots,h_{l}\in C^{\infty}(\Gamma)$ are arbitrary (see, e.g., \cite[formula (4.1.10)]{KozlovMazyaRossmann97}). Here, all PDOs $A^{+}$, $K_{k}$, and $Q_{j,k}^{+}$ are the same as above provided that $k$ ranges from $1$ to $m+1$ in  \eqref{bound-PDO-representation}. Furthermore, each $R_{j,k}^{+}$ is the formally adjoint tangent PDO to $R_{j,k}$ with respect to $(\cdot,\cdot)_{\Gamma}$, whereas the tangent linear PDO $R_{j,k}$ is taken from the representation
\begin{equation*}
D_{\nu}^{j-1}A(x,D)=\sum_{k=1}^{m+1}R_{j,k}(x,D_{\tau})D_{\nu}^{k-1}
\end{equation*}
(of course, $R_{j,k}=0=R_{j,k}^{+}$ if $k\geq 2l+j+1$). The corresponding homogeneous adjoint problem consists of equation \eqref{adjoint-PDE} and the boundary conditions
\begin{equation}\label{adjoint-bound-cond-m}
K_{k}\,w+\sum_{j=1}^{m+1-2l}R_{j,k}^{+}w_{j}+
\sum_{j=1}^{l}Q_{j,k}^{+}h_{j}=0\quad
\mbox{on}\quad\Gamma,\quad k=1,...,m+1.
\end{equation}
This problem contains $m+1-l$ additional unknown functions $w_{j}$ and $h_{j}$ on the boundary $\Gamma$ and is elliptic \cite[Theorem~4.1.1]{KozlovMazyaRossmann97}. Since $m+1-l\geq l+1$, the kernel
\begin{align*}
N^{+}:=\{(w,w_{1},\ldots,w_{m+1-2l},h_{1},\ldots,h_{l})\in
C^{\infty}(\overline{\Omega})\times(C^{\infty}(\Gamma))^{m+1-l}\\
:\mbox{\eqref{adjoint-PDE} and \eqref{adjoint-bound-cond-m} are true}\}
\end{align*}
of this problem cannot be taken as $M$ in the $m\geq 2l$ case.

However, the space $W$ indicated in Remarks \ref{adjoint-kernel} and \ref{adjoint-kernel-negative} is described in terms of $N^{+}$ as follows: $W$ consists of the continuous linear functionals induced by arbitrary vectors
\begin{equation}\label{N+}
(w,w_{1},\ldots,w_{m+1-2l},h_{1},\ldots,h_{l})\in N^{+}
\end{equation}
according to the formula
\begin{equation}\label{Q-m}
\mathcal{L}(f,g):=
(f,w)_{\Omega}+\sum_{j=1}^{m+1-2l}(D_{\nu}^{j-1}f,w_{j})_{\Gamma}+
\sum_{j=1}^{l}(g_{j},h_{j})_{\Gamma},
\end{equation}
where $(f,g_{1},\ldots,g_{l})$ runs through the target space of the corresponding Fredholm operator. This results directly from the fact that such a description holds true for the kernel of the adjoint operator to \eqref{Fredholm-F-positive} if $s=m+1$ and $p=q=2$ (see, e.g., \cite[Theorem~4.1.5]{KozlovMazyaRossmann97}). The distributions  $D_{\nu}^{j-1}f$ on $\Gamma$ are well defined in \eqref{Q-m} by the trace theorem \cite[Theorem~3.3.3]{Triebel83} because $f\in B^{\alpha}_{p,q}(\Omega)$ or $f\in F^{\alpha}_{p,q}(\Omega)$ for certain  $\alpha>m-2l+\pi(p,n)$.

Thus, the vector $(f,g)$ belongs to the range of any Fredholm operator among \eqref{Fredholm-B-positive}, \eqref{Fredholm-F-positive},
\eqref{Fredholm-B}, and \eqref{Fredholm-F} in the $m\geq 2l$ case if and only if $(f,g)$ belongs to the target space of this operator and satisfies the following orthogonality condition:
\begin{gather*}
\mathcal{L}(f,g)=0\quad\mbox{whenever \eqref{N+} holds true}.
\end{gather*}
Apparently, the presence of $(D_{\nu}^{j-1}f,w_{j})_{\Gamma}$
in the orthogonality condition was first indicated in \cite[\S~4, formula~(76)]{VainbergGrushin67b} as a remark to \cite[Theorem~10.6.1]{Hermander63}.

Applying the space $W$ to Fredholm operators induced by the elliptic boundary-value problem, we use the following fact:

\begin{proposition}\label{prop-range}
Let $E$ and $G$ be quasi-Banach spaces, and let $T:E\to G$ be a bounded linear operator. Suppose that $G$ is dual reach and that the factor space $G/T(E)$ is finite-dimensional. Then
\begin{equation}\label{prop-range-f}
T(E)=\{y\in G:\omega(y)=0\;\,\mbox{for every}\;\,\omega\in\ker T'\},
\end{equation}
where $T':G'\to E'$ is the adjoint operator to $T$.
\end{proposition}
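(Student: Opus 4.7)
The plan splits into three steps, of which only the first is technical. I would begin by establishing that $T(E)$ is \emph{closed} in $G$, which is needed to make the quotient $G/T(E)$ a (finite-dimensional) quasi-Banach space and not merely an algebraic one. Pick a finite-dimensional algebraic complement $F\subset G$ of $T(E)$, so $G=T(E)\dotplus F$, and consider the bounded linear surjection $\Psi:E\times F\to G$ given by $\Psi(x,f):=Tx+f$. By Open Mapping Theorem for quasi-Banach spaces \cite[Section~2]{Kalton03}, $\Psi$ is a topological quotient map, so closedness of $T(E)$ in $G$ reduces to that of $\Psi^{-1}(T(E))=E\times\{0\}$, which is obvious (here I use $T(E)\cap F=\{0\}$).

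Having closedness, the inclusion $\subset$ in \eqref{prop-range-f} is immediate: if $y=Tx$ and $\omega\in\ker T'$, then $\omega(y)=(T'\omega)(x)=0$. For the reverse inclusion I would argue by contrapositive. Given $y_0\in G\setminus T(E)$, the continuous quotient map $\pi:G\to G/T(E)$ sends $y_0$ to a nonzero element of the finite-dimensional quasi-Banach space $G/T(E)$. Since the unique Hausdorff linear topology on a finite-dimensional complex vector space is the Euclidean one, every linear functional on $G/T(E)$ is automatically continuous, so elementary linear algebra produces $\widetilde{\omega}\in(G/T(E))'$ with $\widetilde{\omega}(\pi(y_0))\neq 0$. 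The composition $\omega:=\widetilde{\omega}\circ\pi\in G'$ then vanishes on $T(E)$ (hence $\omega\in\ker T'$) while $\omega(y_0)\neq 0$, exhibiting $y_0$ as violating the right-hand side of \eqref{prop-range-f}.

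The principal obstacle is the first step, and even there the work is light: it reduces to invoking Open Mapping Theorem in the quasi-Banach category, as is already used elsewhere in this paper (cf.\ the discussion preceding Lemma~\ref{prop-interp-Fredholm}). The dual-reach hypothesis on $G$ is not strictly used in the above argument---the functional $\omega$ is produced via the finite-dimensional quotient $G/T(E)$---but it guarantees that the adjoint space $G'$, on which $T'$ operates, is rich enough for the conclusion to have content in the setting of the rest of the paper.
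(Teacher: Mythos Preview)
Your argument is correct and differs from the paper's in one meaningful respect. Both proofs first establish closedness of $T(E)$ (the paper cites this as known, referring to the discussion before Lemma~\ref{prop-interp-Fredholm}; you prove it directly via the open-mapping argument with $\Psi$). For the nontrivial inclusion, however, the paper proceeds by choosing a complement $\mathcal{M}$ of $T(E)$, invoking the dual-reach hypothesis to pick $\omega_1\in G'$ with $\omega_1(z)=1$ for the nonzero $\mathcal{M}$-component $z$ of $y_0$, and then setting $\omega:=\omega_1\circ P$ where $P$ is the (bounded) projector onto $\mathcal{M}$. You instead pull a functional back from the finite-dimensional Hausdorff quotient $G/T(E)$, where continuity of linear functionals is automatic. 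The two constructions are closely related---your $\widetilde\omega\circ\pi$ is essentially the paper's $\omega_1\circ P$ once one identifies $G/T(E)$ with $\mathcal{M}$---but your route is slightly more elementary and, as you correctly observe, does not actually use the hypothesis that $G$ be dual reach. The paper's approach makes the role of that hypothesis explicit, which fits its aim of justifying the assumption throughout Section~\ref{sec4}; your approach shows the proposition holds in marginally greater generality.
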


Apparently, this proposition is known. Just in case, we give its proof.

\begin{proof}
The inclusion $\subset$ in \eqref{prop-range-f} is trivial. Let us prove the inverse inclusion by contradiction. Since $\dim G/T(E)<\infty$, we conclude that $T(E)$ is a complemented closed subspace of $G$. Let $\mathcal{M}$ be a complement of $T(E)$ in $G$, and let $P$ be a projector of $G$ on $\mathcal{M}$ parallel to $T(E)$. This projector is a bounded operator on $G$ due to Closed Graph Theorem. Suppose the contrary, namely that a certain vector $y_0\in G\setminus T(E)$ satisfies $\omega(y_0)=0$ for every functional $\omega\in\ker T'$. It follows from the representation $y_0=Tx+z$ for certain vectors $x\in E$ and $z\in\mathcal{M}$ with $z\neq0$ that
\begin{equation}\label{omega(z)=0}
\omega(z)=0\quad\mbox{whenever}\quad \omega\in\ker T'.
\end{equation}
Since $G$ is dual reach, there exists a functional $\omega_1\in G'$ such that $\omega_1(z)=1$. Define a functional $\omega\in G'$ by the formula $\omega(y):=\omega_1(Py)$ for every $y\in G$. We have the inclusion $\omega\in\ker T'$ because
\begin{equation*}
(T'\omega)x=\omega(Tx)=\omega_1(PTx)=\omega_1(0)=0
\quad\mbox{whenever}\quad x\in E.
\end{equation*}
However, $\omega(z)=\omega_1(Pz)=\omega_1(z)=1$, which contradicts \eqref{omega(z)=0}. This proves the inclusion $\supset$ in \eqref{prop-range-f}.
\end{proof}

\section{Appendix B}\label{sec7}

We will prove Lemma~\ref{prop-interp-Fredholm} and Corollary~\ref{cor-interp-Fredholm}, which give sufficient conditions for interpolation of Fredholm bounded operators to yield a Fredholm operator again. Recall that we used these results to substantiate Theorems \ref{th-Fredholm} and \ref{th-Fredholm-Nikolskii}. The proof of Lemma~\ref{prop-interp-Fredholm} is rather technical, however we could not find it in the literature.

\begin{proof}[Proof of Lemma~$\ref{prop-interp-Fredholm}$.]
\emph{Step~$1$.}  The operator $T:X_0+X_1\to Y_0+Y_1$ is well posed by condition~(iii). Indeed, if a vector $u\in X_0+X_1$ is represented as $u=u_0+u_1$ and $u=v_0+v_1$ for certain $u_0,v_0\in X_0$ and $u_1,v_1\in X_1$, then $u_0-v_0=v_1-u_1\in X_0\cap X_1$, which implies that $T_0(u_0-v_0)=T_1(v_1-u_1)$ by (iii), i.e. $T_0u_0+T_1u_1=T_0v_0+T_1v_1:=Tu$. This operator induces an isomorphism
\begin{equation}\label{l42f1}
T:(X_0+X_1)/\mathcal{N}\leftrightarrow T_0(X_0)+T_1(X_1),
\end{equation}
we considering $T_0(X_0)$ and $T_1(X_1)$ as quasi-Banach spaces endowed with quasi-norms $\|\cdot\|_{Y_0}$ and $\|\cdot\|_{Y_1}$, resp. To prove this we only need to show that $\ker T=\mathcal{N}$ and apply Open Mapping Theorem.

Let $u\in X_0+X_1$ and $Tu=0$. Then $u=u_0+u_1$ for certain $u_0\in X_0$ and $u_1\in X_1$, and $T_0u_0+T_1u_1=0$. Hence,
\begin{equation*}
f:=T_0u_0=-T_1u_1\in T_0(X_0)\cap T_1(X_1)=T_0(X_0\cap X_1)=T_1(X_0\cap X_1)
\end{equation*}
by condition~(v). Therefore, $f=T_0v=T_1v$ for some $v \in X_0\cap X_1$.
Thus,  $T_0u_0=T_0v$ and $-T_1u_1=T_1v$, i.e. $T_0(u_0-v)=0$ and $T_1(v+u_1)=0$. Hence $u_0-v\in\mathcal{N}$ and $v+u_1\in\mathcal{N}$ by condition~(i), which gives $u_0+u_1=(u_0-v)+(v+u_1)\in\mathcal{N}$. We have proved the inclusion  $\ker T\subset\mathcal{N}$. The inverse is obvious.

\emph{Step~$2$.} Note that $\ker T_{\mathfrak{F}}=\ker T$ because $\mathcal{N}\subset X_0\cap X_1\subset\mathfrak{F}[X_0,X_1]$. Hence, $\ker T_{\mathfrak{F}}=\mathcal{N}$ according to Step~1, i.e. $T_{\mathfrak{F}}$ satisfies property~(a). To substantiate other properties of $T_{\mathfrak{F}}$, we consider the induced isomorphisms
\begin{equation*}
T_0:X_0/\mathcal{N}\leftrightarrow T_0(X_0)\quad\mbox{and}\quad
T_1:X_1/\mathcal{N}\leftrightarrow T_1(X_1)
\end{equation*}
and note that the pairs $[X_0/\mathcal{N},X_1/\mathcal{N}]$ and $[T_0(X_0),T_1(X_1)]$ of quasi-Banach spaces are interpolation pairs because of the continuous embeddings $X_0/\mathcal{N},X_1/\mathcal{N}\hookrightarrow(X_0+X_1)/\mathcal{N}$ and $T_0(X_0),T_1(X_1)\hookrightarrow Y_0+Y_1$. These isomorphisms are  restrictions of isomorphism \eqref{l42f1}; hence, interpolating them, we get another isomorphism
\begin{equation}\label{l42intoperator1}
T:\mathfrak{F}[X_0/\mathcal{N}, X_1/\mathcal{N}]\leftrightarrow
\mathfrak{F}[T_0(X_0), T_1(X_1)].
\end{equation}
We will describe the domain and range of \eqref{l42intoperator1} via the interpolation spaces $\mathfrak{F}[X_0,X_1]$ and $\mathfrak{F}[Y_0,Y_1]$, resp.

\emph{Step~$3$.} Let us show that the domain coincides with the factor space of $\mathfrak{F}[X_0,X_1]$ by $\mathcal{N}$. Let $\{w_{1},\ldots,w_{\lambda}\}$ be a basis of $\mathcal{N}$, with $\lambda:=\dim\mathcal{N}<\infty$. Since the quasi-Banach space $X_0+X_1$ is dual reach, there exist continuous linear functionals $\varphi_{1},\ldots,\varphi_{\lambda}$ on $X_0+X_1$ such that $\varphi_{j}(w_{k})=\delta_{j,k}$ (Kronecker's symbol) whenever $j,k\in\{1,\ldots,\lambda\}$ (see, e.g., \cite[Lemma~2.2]{GeislerRunst91}). Then the linear mapping
\begin{equation}\label{def-P}
P:u\mapsto\sum_{j=1}^{\lambda}\varphi_{j}(u)w_{j},
\quad\mbox{where}\quad u\in X_0+X_1,
\end{equation}
is a bounded projector of $X_0+X_1$ on $\mathcal{N}$. Moreover, the restriction of $P$ on each $X_j$, with $j\in\{0,1\}$, is a bounded projector of $X_j$ onto $\mathcal{N}$. This implies that
\begin{equation}\label{l42intequiv}
\mathfrak{F}[X_0/\mathcal{N},X_1/\mathcal{N}]=
(\mathfrak{F}[X_0,X_1])/\mathcal{N}
\end{equation}
up to equivalence of quasi-norms. This fact is contained in \cite[Theorem~1.17.2]{Triebel95} for Banach spaces. Apparently, this theorem remains valid for quasi-Banach spaces. Nevertheless, we prefer to substantiate  \eqref{l42intequiv} for the readers convenience.

To this end, we consider the linear mappings
\begin{equation*}
R:X_0+X_1\ni u\mapsto \{u+w:w\in \mathcal{N}\}\in (X_0+X_1)/\mathcal{N}
\end{equation*}
and
\begin{equation*}\label{l42Qmapping2}
Q:(X_0+X_1)/\mathcal{N} \ni\{u+w:w\in \mathcal{N}\}\mapsto u-Pu\in X_0+X_1.
\end{equation*}
We see that $Q$ is well defined and that $RQ=I$ is the identity mapping on $(X_0+X_1)/\mathcal{N}$.
Restrictions of $R$ and $Q$ set the following bounded operators:
\begin{gather}\label{l42Rrestroper1}
R:X_j\rightarrow X_j/\mathcal{N}\quad\mbox{whenever}\quad j\in\{0,1\},\\
R:\mathfrak{F}[X_0, X_1]\rightarrow (\mathfrak{F}[X_0, X_1])/\mathcal{N}, \label{l42Rrestroper2}\\
Q:X_j/\mathcal{N}\rightarrow X_j\quad\mbox{whenever}\quad j\in\{0,1\},
\label{l42Qrestroper1}\\
Q:(\mathfrak{F}[X_0, X_1])/\mathcal{N}\rightarrow \mathfrak{F}[X_0, X_1]. \label{l42Qrestroper2}
\end{gather}
Note that the boundedness of the last operator follows from the boundedness of the operator $P:\mathfrak{F}[X_0, X_1]\to\mathcal{N}$. The latter results from the boundedness of the operators $P:X_0\to\mathcal{N}$ and $P:X_1\to\mathcal{N}$ (or from \eqref{def-P} and the continuous embedding  $\mathfrak{F}[X_0, X_1]\hookrightarrow X_0+X_1$).
Interpolating \eqref{l42Rrestroper1} and \eqref{l42Qrestroper1}, we get the bounded operators
\begin{equation}\label{l42intRoperator}
R:\mathfrak{F}[X_0, X_1]\rightarrow
\mathfrak{F}[X_0/\mathcal{N},X_1/\mathcal{N}]
\end{equation}
and
\begin{equation}\label{l42intQoperator}
Q:\mathfrak{F}[X_0/\mathcal{N},X_1/\mathcal{N}]\rightarrow \mathfrak{F}[X_0, X_1].
\end{equation}
Owing to \eqref{l42intQoperator} and \eqref{l42Rrestroper2} we have the continuous embedding
\begin{equation}\label{l42RQoperator}
I=RQ:\mathfrak{F}[X_0/\mathcal{N}, X_1/\mathcal{N}]\hookrightarrow
(\mathfrak{F}[X_0, X_1])/\mathcal{N}.
\end{equation}
The inverse follows from \eqref{l42Qrestroper2} and \eqref{l42intRoperator}. We have proved \eqref{l42intequiv}.

\emph{Step~$4$.} Let us show that the range of isomorphism \eqref{l42intoperator1} is the intersection of $\mathfrak{F}[Y_0,Y_1]$ and $T(X_0+X_1)$. Given $j\in\{0,1\}$, we let $\mathcal{P}_j$ denote the projector of $Y_j$ on $T_j(Y_j)$ parallel to $\mathcal{M}$. This projector is well defined owing to condition~(iv) and is a bounded operator on $Y_j$ due to Closed Graph Theorem. Note that
\begin{equation}\label{l42f2}
\mathcal{P}_{0}f=\mathcal{P}_{1}f\quad\mbox{whenever}\quad
f\in Y_0\cap Y_1.
\end{equation}
Indeed, any vector $f\in Y_0\cap Y_1$ admits the representation
\begin{equation}\label{l42f3}
h_0+T_0u_0=f=h_1+T_1u_1
\end{equation}
for certain $h_0,h_1\in\mathcal{M}$, $u_0\in X_0$, and $u_1\in X_1$. Hence,
\begin{equation*}
T_0u_0=h_1-h_0+T_1u_1 \in T_0(X_0)\cap Y_1\subset T_1(X_0\cap X_1)
\end{equation*}
or
\begin{equation*}
T_1u_1=h_0-h_1+T_0u_0 \in T_1(X_1)\cap Y_0\subset T_0(X_0\cap X_1)
\end{equation*}
according to condition~(v). Then $h_1-h_0\in\mathcal{M}\cap T_1(X_1)$ or $h_0-h_1\in\mathcal{M}\cap T_0(X_0)$, which implies by condition~(iv) that $h_1-h_0=0$. Thus, $\mathcal{P}_0f=T_0u_0=T_1u_1=\mathcal{P}_1f$ in view of \eqref{l42f3}.

Consider the linear mapping
\begin{equation}\label{P+}
\mathcal{P}:f_0+f_1\mapsto\mathcal{P}_0f_0+\mathcal{P}_1f_1,
\quad\mbox{where}\quad f_0 \in Y_0\quad\mbox{and}\quad
f_1 \in Y_1.
\end{equation}
It is well defined by \eqref{l42f2} and is a bounded operator on $Y_0+Y_1$ that projects $Y_0+Y_1$ on $T(X_0+X_1)$. Hence, $T(X_0+X_1)$ is a closed subspace of $Y_0+Y_1$ (and the quasi-Banach spaces $T(X_0+X_1)$ and $T_0(X_0)+T_1(X_1)$ are equal up to equivalence of quasi-norms by Open Mapping Theorem.) The projectors $\mathcal{P}_0$ and $\mathcal{P}_1$ are restrictions of $\mathcal{P}$. We will show that this implies
\begin{equation}\label{l42rangeinterpol}
\mathfrak{F}[T_0(X_0),T_1(X_1)]=\mathfrak{F}[Y_0,Y_1]\cap T(X_0+X_1)
\end{equation}
up to equivalence of quasi-norms. Here, the right-hand side of the equality is a closed subspace of $\mathfrak{F}[Y_0,Y_1]$ due to the continuous embedding $\mathfrak{F}[Y_0,Y_1]\hookrightarrow Y_0+Y_1$.

Let us substantiate \eqref{l42rangeinterpol}. (In the case of Banach spaces, this interpolation formula follows from \cite[Theorem~1.17.1/1]{Triebel95} in view of condition~(v).) Let $\mathcal{Z}$ denote the the right-hand side of \eqref{l42rangeinterpol} considered as a subspace of $\mathfrak{F}[Y_0,Y_1]$. Interpolating the bounded operators $\mathcal{P}_0:Y_0\to T_0(X_0)$ and $\mathcal{P}_1:Y_1\to T_1(X_1)$, we conclude that a restriction of $\mathcal{P}$ to $\mathfrak{F}[Y_0, Y_1]$ is a bounded operator
\begin{equation*}
\mathcal{P}:\mathfrak{F}[Y_0, Y_1]\rightarrow
\mathfrak{F}[T_0(X_0), T_1(X_1)],
\end{equation*}
with $\mathcal{P}f=f$ for every $f\in\mathcal{Z}$. This yields the continuous embedding
\begin{equation}\label{l42embedding1}
\mathcal{Z}\hookrightarrow  \mathfrak{F}[T_0(X_0), T_1(X_1)].
\end{equation}
On the other hand, interpolating the operators of the continuous embeddings $T_0(X_0)\hookrightarrow Y_0$ and $T_1(X_1)\hookrightarrow Y_1$, we get another continuous embedding
\begin{equation*}
\mathfrak{F}[T_0(X_0),T_1(X_1)]\hookrightarrow \mathfrak{F}[Y_0, Y_1].
\end{equation*}
However,
\begin{equation*}
\mathfrak{F}[T_0(X_0), T_1(X_1)]\subset T_0(X_0)+T_1(X_1)=T(X_0+X_1).
\end{equation*}
Hence,
\begin{equation}\label{l42embedding3}
\mathfrak{F}[T_0(X_0), T_1(X_1)]\subset\mathcal{Z}.
\end{equation}
Now \eqref{l42embedding1} and \eqref{l42embedding3} give \eqref{l42rangeinterpol} in view of Open Mapping Theorem.

\emph{Step~$5$.} Owing to \eqref{l42intequiv} and \eqref{l42rangeinterpol}, the isomorphism \eqref{l42intoperator1} acts between the spaces
\begin{equation}\label{l42IsomorphismInterpol}
T:(\mathfrak{F}[X_0,X_1])/N\leftrightarrow\mathfrak{F}[Y_0,Y_1]\cap T(X_0+X_1).
\end{equation}
This yields property~(c). It follows from \eqref{P+} that $\mathcal{I}-\mathcal{P}$ is the projector of $Y_0+Y_1$ on $\mathcal{M}$, with $\mathcal{I}$ denoting the identity operator on $Y_0+Y_1$. Hence, $\mathcal{I}-\mathcal{P}$ and $\mathcal{P}$ induces the splitting
\begin{equation}\label{T-range-M}
Y_0+Y_1=\mathcal{M}\dotplus T(X_0+X_1),
\end{equation}
which implies property~(d) by~(c).
According to (a) and (d), the operator $T_{\mathfrak{F}}$ is Fredholm and satisfies property (b), as
\begin{equation*}
\mathrm{ind}\,T_{\mathfrak{F}}=\dim\mathcal{N}-\dim\mathcal{M}=\kappa
\end{equation*}
in view of conditions (i) (ii), and (iv).
\end{proof}

\begin{remark}\label{rem-Fredholm-T}
The operator $T:X_0+X_1\to Y_0+Y_1$ in Lemma~\ref{prop-interp-Fredholm} is obtained by the interpolation of the operators $T_{0}$ and $T_{1}$ by the trivial interpolation functor $\mathfrak{F}:[E_{0},E_{1}]\mapsto E_{0}+E_{1}$ defined on the category of all interpolation pairs $[E_{0},E_{1}]$ of quasi-Banach spaces. Hence the bounded operator $T$ is Fredholm and possesses the properties $\ker T=\mathcal{N}$, $\mathrm{ind}\,T=\kappa$, and \eqref{T-range-M}. Note that we have previously proved them on Steps 1 and~4.
\end{remark}

\begin{remark}
As to condition (v) in Lemma~\ref{prop-interp-Fredholm}, it is worthwhile to note that
\begin{equation}\label{equivalence-v}
Y_1\cap T_{0}(X_0)\subset T_{0}(X_0\cap X_1)\,\Longleftrightarrow\,
Y_0\cap T_{1}(X_1)\subset T_{1}(X_0\cap X_1)
\end{equation}
under conditions (iii) and (iv). Indeed, assuming, e.g., that the left-hand side of \eqref{equivalence-v} holds true and choosing a vector $f\in Y_0\cap T_{1}(X_1)$ arbitrarily, we represent it in the form  $f=h+T_{0}u_{0}=T_{1}u_{1}$ for certain $h\in\mathcal{M}$, $u_{0}\in X_0$, and $u_{1}\in X_1$ and conclude that
\begin{equation*}
T_{1}u_{1}-h=T_{0}u_{0}\in Y_1\cap T_{0}(X_0)\subset T_{0}(X_0\cap X_1)=
T_{1}(X_0\cap X_1)
\end{equation*}
in view of (iii) and (iv). Hence, $h\in \mathcal{M}\cap T_{1}(X_{1})=\{0\}$ and $f=T_{1}u_{1}\in T_{1}(X_0\cap X_1)$.
\end{remark}

\begin{remark}
Although condition (v) may seem superfluous in Lemma~\ref{prop-interp-Fredholm}, it is essential and cannot be removed or replaced with the weaker assumption that $Y_1\cap T_{0}(X_0)\subset T_{1}(X_1)$ and $Y_0\cap T_{1}(X_1)\subset T_{0}(X_0)$. This is justified by the following simple example: let $E$ be a quasi-Banach space,  $X_0=\{(x_0,0):x_0\in E\}\subset E^{2}$, $X_1=\{(0,x_{1}):x_{1}\in E\}\subset E^{2}$, $Y_0=Y_{1}=E$, $T_{0}:(x_0,0)\mapsto x_0$, $T_{1}:(0,x_{1})\mapsto x_{1}$. In this case, we have the isomorphisms $T_0:X_0\leftrightarrow Y_0$ and $T_1:X_1\leftrightarrow Y_1$, which satisfy conditions (i)--(iv) and the above assumption. (These operators do not satisfy condition~(v) because $X_0\cap X_1=\{0\}$.) However, the operator $T$ is defined by the formula $T:(x_0,x_1)\mapsto x_0+x_1$ and, hence, is not an isomorphism between $X_{0}+X_{1}=E^{2}$ and $Y_0+Y_1=E$, which contradicts Remark~\ref{rem-Fredholm-T}. If $\dim E=\infty$, this operator is not even Fredholm.
\end{remark}

\begin{proof}[Proof of Corollary~$\ref{cor-interp-Fredholm}$.]
We only need to show that the operators $T_0$ and $T_1$ satisfy hypothesis (iv) of Lemma~$\ref{prop-interp-Fredholm}$. (Hypothesis (v) is satisfied because the second inclusion in it is trivial.) Consider the adjoint operators $T_0'$ and $T_1'$ to $T_0$ and $T_1$, resp, and note that $\ker T_0'\subset \ker T_1'$ due to hypothesis (iii) and the dense continuous embeddings $X_1\hookrightarrow X_0$ and $Y_1\hookrightarrow Y_0$. Moreover,
\begin{equation*}
d:=\dim\ker T_0'=\dim\ker T_1'=\dim\mathcal{N}-\kappa
\end{equation*}
owing to  hypotheses (i) and (ii). Hence, $\ker T_0'=\ker T_1'$. This implies that
\begin{equation}\label{proof-corr-inclusion}
Y_1\cap T_{0}(X_0)\subset T_{1}(X_1).
\end{equation}
Indeed, if $f\in Y_1\cap T_{0}(X_0)$, then $\omega(f)=0$ for every functional $\omega\in \ker T_0'=\ker T_1'$, which yields $f\in T_{1}(X_1)$. Let $\mathcal{M}$ be a complement of $T_1(X_1)$ in $Y_1$. Such a complement exists; we may put $\mathcal{M}:=\mathrm{span}\{f_{1},\ldots,f_{d}\}$, where the cosets
$\{f_{j}+h:h\in T_1(X_1)\}$, with $j=1,\ldots,d$, form the basis of $Y_1/T(X_1)$. Then $Y_0=\mathcal{M}\dotplus T_0(X_0)$ because $\dim M=d=\dim(Y_0/T_0(X_0))$ and since
\begin{equation*}
\mathcal{M}\cap T_0(X_0)=\mathcal{M}\cap T_{1}(X_1)=\{0\}
\end{equation*}
by \eqref{proof-corr-inclusion}. Thus, $T_0$ and $T_1$ satisfy hypothesis (iv).
\end{proof}

\subsection*{Acknowledgments.} This work was funded by the National Academy of Sciences of Ukraine. The authors were supported by the European Union's Horizon 2020 research and innovation programme under the Marie Sk{\l}odowska-Curie grant agreement No~873071 (SOMPATY: Spectral Optimization: From Mathematics to Physics and Advanced Technology). The first named author was also supported by a grant from the Simons Foundation (1030291, I.S.C., and 1290607, I.S.C.) and by Universities-for-Ukraine Non-residential Fellowship with a Johns Hopkins University.

\end{document}